\pgfplotsset{compat=1.18}
\definecolor{uuuuuu}{rgb}{0.27,0.27,0.27}
\definecolor{sqsqsq}{rgb}{0.1255,0.1255,0.1255}
\newtheorem{definition}{Definition} [section]
\newtheorem{theorem}[definition]{Theorem}
\newtheorem{lemma}[definition]{Lemma}
\newtheorem{proposition}[definition]{Proposition}
\newtheorem{claim}[definition]{Claim}
\newtheorem{problem}[definition]{Problem}
\newtheorem{fact}[definition]{Fact}
\newcommand{\norm}[1]{\left\lVert#1\right\rVert}
\begin{document}
\title{\bf\Large Exact Tur\'{a}n number of the Fano plane in the $\ell_2$-norm}
\date{\today}
\author[1]{Jianfeng Hou\thanks{Research supported by National Key R\&D Program of China (Grant No. 2023YFA1010202), the Central Guidance on Local Science and Technology Development Fund of Fujian Province (Grant No. 2023L3003). Email: \texttt{jfhou@fzu.edu.cn}}}
\author[2]{Xizhi Liu\thanks{Email: \texttt{liuxizhi@ustc.edu.cn}}}
\author[1]{Yixiao Zhang\thanks{Research supported by National Key R\&D Program of China (Grant No. 2023YFA1010202). Email: \texttt{fzuzyx@gmail.com}}}
\affil[1]{Center for Discrete Mathematics, Fuzhou University, Fujian, 350108, China}
\affil[2]{School of Mathematical Sciences, 
            USTC,
            Hefei, 230022, China}
\maketitle
\begin{abstract}
    A classical object in hypergraph Tur\'{a}n theory is the Fano plane $\mathbb{F}$, the unique linear $3$-graph on seven vertices with seven edges. The Tur\'{a}n density and exact Tur\'{a}n number of $\mathbb{F}$, first proposed as a problem by S\'{o}s~\cite{Sos76} in the 1970s, were determined through a sequence of works by De Caen--F\"{u}redi~\cite{DCF00}, F\"{u}redi--Simonovits~\cite{FS05}, Keevash--Sudakov~\cite{KS05}, and Bellmann--Reiher~\cite{BR19}. 
    
    Addressing a conjecture of Balogh--Clemen--Lidick\'{y}~{\cite[Conjecture~3.1]{BCL22a}}, we establish an Andr\'{a}sfai--Erd\H{o}s--S\'{o}s--type stability theorem for $\mathbb{F}$ in the $\ell_2$-norm: there exists a positive constant $\varepsilon$ such that for large $n$, every $\mathbb{F}$-free $3$-graph on $n$ vertices with minimum $\ell_2$-norm degree at least $(5/4 - \varepsilon)n^3$ must be bipartite. 
    As a consequence, for large $n$, the balanced complete bipartite $3$-graph is the unique extremal construction for the $\ell_{2}$-norm Tur\'{a}n problem of $\mathbb{F}$, thereby confirming the conjecture of Balogh--Clemen--Lidick\'{y}.  

    Our proof includes a refinement of a classical result by Ahlswede--Katona~\cite{AK78} on counting stars, and the establishment of an Andr\'{a}sfai--Erd\H{o}s--S\'{o}s--type theorem for a multigraph Tur\'{a}n problem studied by Bellmann--Reiher~\cite{BR19}, both of which are of independent interest.
\end{abstract}

\section{Introduction}\label{SEC:Introduction}
Given an integer $r\ge 2$, an \textbf{$r$-uniform hypergraph} (henceforth an \textbf{$r$-graph}) $\mathcal{H}$ is a collection of $r$-subsets of some set $V$. We call $V$ the \textbf{vertex set} of $\mathcal{H}$ and denote it by $V(\mathcal{H})$. 
The size of $V(\mathcal{H})$ is denoted by $v(\mathcal{H})$. 
We identify a hypergraph $\mathcal{H}$ with its set of edges, and hence, $|\mathcal{H}|$ represents the number of edges in $\mathcal{H}$. 

Given an $r$-graph $\mathcal{H}$, the \textbf{shadow} of $\mathcal{H}$ is defined as 
\begin{align*}
    \partial\mathcal{H}
    \coloneqq \left\{e\in \binom{V(\mathcal{H})}{r-1} \colon \text{there exists an edge $E\in \mathcal{H}$ containing $e$} \right\}. 
\end{align*}
The \textbf{link} of a vertex $v \in V(\mathcal{H})$ is given by 
\begin{align*}
    L_{\mathcal{H}}(v)
    \coloneqq \left\{e \in \partial\mathcal{H} \colon \{v\} \cup e \in \mathcal{H}\right\}, 
\end{align*}
and the \textbf{degree} of $v$ is $d_{\mathcal{H}}(v) \coloneqq |L_{\mathcal{H}}(v)|$. 

For an $(r-1)$-set $T \subseteq V(\mathcal{H})$, the \textbf{neighborhood} of $T$ in $\mathcal{H}$ is defined as 
\begin{align*}
    N_{\mathcal{H}}(T) 
    \coloneqq \left\{v\in V(\mathcal{H}) \colon T \cup \{v\} \in \mathcal{H}\right\}, 
\end{align*}
and the \textbf{codegree} of $T$ is $d_{\mathcal{H}}(T) \coloneqq |N_{\mathcal{H}}(T)|$. 

Following the definitions in~\cite{BCL22b,CILLP24}, for every real number $p \ge 1$, the \textbf{$\ell_p$-norm} of an $r$-graph $\mathcal{H}$ is defined as 
\begin{align*}
    \norm{\mathcal{H}}_{p}
    \coloneqq \sum_{e\in \partial\mathcal{H}} d^{p}_{\mathcal{H}}(e), 
\end{align*}
where $d^{p}_{\mathcal{H}}(e)$ is an abbreviation for $\left(d_{\mathcal{H}}(e)\right)^{p}$. 
Note that for $p=1$, we have $\norm{\mathcal{H}}_{1} = r \cdot |\mathcal{H}|$.

Extending the definition of degree, the \textbf{$\ell_p$-norm degree} of a vertex $v\in V(\mathcal{H})$ in $\mathcal{H}$, denoted by $d_{p,\mathcal{H}}(v)$, is defined as 
\begin{align*}
    d_{p,\mathcal{H}}(v)
    \coloneqq \norm{\mathcal{H}}_{p} - \norm{\mathcal{H} - v}_{p},
\end{align*}
where $\mathcal{H}-v$ denotes the $r$-graph obtained from $\mathcal{H}$ by removing the vertex $v$ and all edges containing $v$. 
Denote by $\delta_{\ell_p}(\mathcal{H})$ the \textbf{minimum $\ell_p$-norm degree} of $\mathcal{H}$.

Given a family $\mathcal{F}$ of $r$-graphs, an $r$-graph $\mathcal{H}$ is \textbf{$\mathcal{F}$-free}
if it does not contain any member of $\mathcal{F}$ as a subgraph.
The \textbf{$\ell_p$-norm Tur\'{a}n number} $\mathrm{ex}_{\ell_p}(n, \mathcal{F})$ of $\mathcal{F}$ is the maximum $\ell_p$-norm of an $\mathcal{F}$-free $r$-graph $\mathcal{H}$ on $n$ vertices. 
The \textbf{$\ell_p$-norm Tur\'{a}n density} of $\mathcal{F}$ is defined as 
\begin{align*}
    \pi_{\ell_p}(\mathcal{F})
    \coloneqq \lim_{n \to \infty} \frac{\mathrm{ex}_{\ell_p}(n, \mathcal{F})}{n^{r-1+p}}. 
\end{align*}
The existence of this limit follows from a simple averaging argument which can be found in such as~\cite{KNS64},~{\cite[Proposition~1.8]{BCL22b}}, and~{\cite[Proposition~2.2]{CILLP24}}. 
%

Recall that the ordinary Tur\'{a}n number $\mathrm{ex}(n, \mathcal{F})$ of $\mathcal{F}$ is the maximum number of edges in an $n$-vertex $\mathcal{F}$-free $r$-graph, and the ordinary Tur\'{a}n density of $\mathcal{F}$ is $\pi(\mathcal{F}) \coloneqq \lim_{n\to \infty}\mathrm{ex}(n, \mathcal{F})/\binom{n}{r}$. 
Note that the $\ell_1$-norm Tur\'{a}n number of $\mathcal{F}$ is simply $r$ times its ordinary Tur\'{a}n number, and $\pi(\mathcal{F}) = (r-1)! \cdot \pi_{\ell_1}(\mathcal{F})$.  

For $r=2$ (i.e., graphs), the value of $\pi(\mathcal{F})$ is well understood thanks to the celebrated general theorem of Erd\H{o}s--Stone~\cite{ES46} (see also~\cite{ES66}), which extends Tur\'{a}n's seminal theorem on complete graphs~\cite{Tur41} to arbitrary graph families.
The study of $\pi_{\ell_p}(\mathcal{F})$ for $p > 1$ was initiated by Caro--Yuster~\cite{CY00,CY04}, and Erd\H{o}s--Stone-type results in this setting were later obtained by Bollob\'{a}s--Nikiforov~\cite{BN12}. 

For $r \ge 3$, determining $\pi(\mathcal{F})$ is already notoriously difficult in general, despite significant effort devoted to this area. 
For results up to~2011, we refer the reader to the excellent survey by Keevash~\cite{Kee11}. 
Very recently, Balogh--Clemen--Lidick\'{y}~\cite{BCL22a,BCL22b} initiated the study of $\pi_{\ell_p}(\mathcal{F})$ for hypergraph families. 
Among their many results, they determined the values of $\pi_{\ell_2}(K_{4}^{3})$ and $\pi_{\ell_2}(K_{5}^{3})$, utilizing computer-assisted flag algebra computations, a powerful tool first introduced by~\cite{Raz07}. 
These results are particularly interesting given the notorious difficulty of determining $\pi(K_{\ell}^{3})$ for any $\ell \ge 4$, a problem originally posed by Tur\'{a}n~\cite{Tur41}. 
The $\ell_p$-norm Tur\'{a}n problems for hypergraph have  been explored more systematically in recent works such as~\cite{CL24,CILLP24,GLMP24}.

In this work, we focus on the following classical object in hypergraph Tur\'{a}n theory. 
Let $\mathbb{F}$ denote the $3$-graph (the Fano plane) on $\{1,\ldots, 7\}$ with $7$ edges: 
\begin{align*}
    \big\{ \{1,2,3\},~\{3,4,5\},~\{5,6,1\},~\{1,7,4\},~\{2,7,5\},~\{3,7,6\},~\{2,4,6\} \big\}.
\end{align*}

\begin{figure}[H]
    \centering
    \tikzset{every picture/.style={line width=1pt}} 
    \begin{tikzpicture}[x=0.75pt,y=0.75pt,yscale=-1,xscale=1,line join=round]
    \draw   (389.01,216.91) .. controls (369.98,216.81) and (354.71,184.29) .. (354.92,144.28) .. controls (355.12,104.27) and (370.71,71.91) .. (389.74,72.01) .. controls (408.77,72.11) and (424.03,104.62) .. (423.83,144.63) .. controls (423.63,184.64) and (408.04,217) .. (389.01,216.91) -- cycle ;
    \draw   (496.07,218.55) .. controls (477.04,218.56) and (461.59,186.13) .. (461.57,146.12) .. controls (461.55,106.11) and (476.96,73.66) .. (495.99,73.65) .. controls (515.02,73.64) and (530.47,106.07) .. (530.49,146.08) .. controls (530.51,186.1) and (515.1,218.54) .. (496.07,218.55) -- cycle ;
    \draw [fill=uuuuuu, fill opacity=0.5]  (393,118) .. controls (431,116) and (479,115) .. (497,98)  .. controls (480,110) and (484,127) .. (495,138) .. controls (478,120) and (434,122) .. (393,118) -- cycle;
    \draw  [fill=uuuuuu, fill opacity=0.5]  (389,157) .. controls (425,172) and (472,165) .. (491,165) .. controls (460,171) and (427,172) .. (391,191) .. controls (404,175) and (402,169) .. (389,157) --cycle;
    \draw   (134,167) .. controls (134,142.15) and (154.15,122) .. (179,122) .. controls (203.85,122) and (224,142.15) .. (224,167) .. controls (224,191.85) and (203.85,212) .. (179,212) .. controls (154.15,212) and (134,191.85) .. (134,167) -- cycle ;
    \draw   (179,76) -- (259,212) -- (99,212) -- cycle ;
    \draw    (179,76) -- (179,212) ;
    \draw    (99,212) -- (218,144) ;
    \draw    (141,145) -- (259,212) ;
    \draw [fill=uuuuuu] (393,118) circle (1.2pt);
    \draw [fill=uuuuuu]  (389,157) circle (1.2pt);
    \draw [fill=uuuuuu] (491,165) circle (1.2pt);
    \draw [fill=uuuuuu] (391,191) circle (1.2pt);
    \draw [fill=uuuuuu] (497,98) circle (1.2pt);
    \draw [fill=uuuuuu] (495,138) circle (1.2pt);
    \draw (380,226) node [anchor=north west][inner sep=0.75pt]   [align=left] {$V_1$};
    \draw (486,226) node [anchor=north west][inner sep=0.75pt]   [align=left] {$V_2$};
    \draw [fill=uuuuuu] (179,76) circle (2pt);
    \draw (173,58) node [anchor=north west][inner sep=0.75pt]   [align=left] {$1$};
    \draw [fill=uuuuuu] (218,144) circle (2pt);
    \draw (225,134) node [anchor=north west][inner sep=0.75pt]   [align=left] {$2$};
    \draw [fill=uuuuuu] (259,212) circle (2pt);
    \draw (253,216) node [anchor=north west][inner sep=0.75pt]   [align=left] {$3$};
    \draw [fill=uuuuuu] (179,212) circle (2pt);
    \draw (174,216) node [anchor=north west][inner sep=0.75pt]   [align=left] {$4$};
    \draw [fill=uuuuuu] (99,212) circle (2pt);
    \draw (93,216) node [anchor=north west][inner sep=0.75pt]   [align=left] {$5$};
    \draw [fill=uuuuuu] (140,145) circle (2pt);
    \draw (123,134) node [anchor=north west][inner sep=0.75pt]   [align=left] {$6$};
    \draw [fill=uuuuuu] (179,167) circle (2pt);
    \draw (181,173) node [anchor=north west][inner sep=0.75pt]   [align=left] {$7$};
    \end{tikzpicture}
    \caption{The Fano plane and the balanced complete bipartite $3$-graph $\mathbb{B}_{n}$.}
    \label{fig:Fano}
\end{figure}
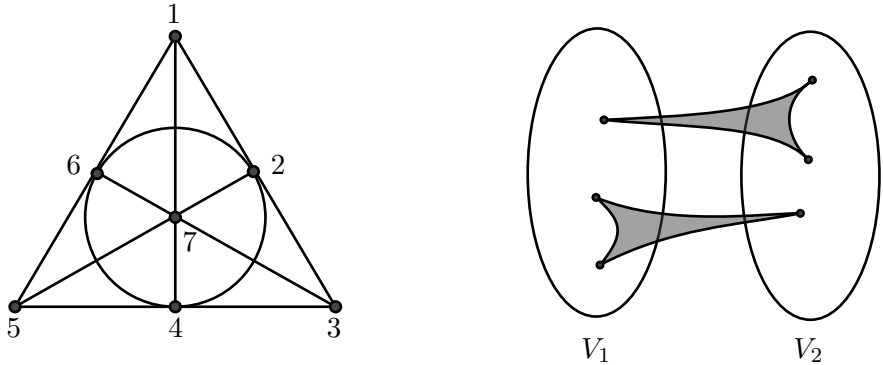

A $3$-graph $\mathcal{H}$ is \textbf{bipartite} if there exists a partition $V_1 \cup V_2 = V(\mathcal{H})$ such that every edge in $\mathcal{H}$ intersects both $V_1$ and $V_2$. 
Let $\mathbb{B}_{n}$ denote the balanced complete bipartite $3$-graph on $n$ vertices. 
Simple calculations show that $\delta_{\ell_2}(\mathbb{B}_{n}) = \left( 5/4 + o_{n}(1) \right)n^3$. 

Observe that every bipartite $3$-graph is $\mathbb{F}$-free, and hence, $\pi(\mathbb{F}) \ge \lim_{n\to \infty} |\mathbb{B}_{n}|/\binom{n}{3} = 3/4$. 
It was conjectured by S\'{o}s~\cite{Sos76} in the 1970s and famously resolved by De Caen--F\"{u}redi in an elegant paper~\cite{DCF00} that $\pi(\mathbb{F}) = 3/4$. 
Later, using Simonovits' stability method, F\"{u}redi--Simonovits~\cite{FS05}, and independently, Keevash--Sudakov~\cite{KS05} proved that $\mathrm{ex}(n,\mathbb{F}) = |\mathbb{B}_{n}|$ for all sufficiently large $n$.
The determination of $\mathrm{ex}(n,\mathbb{F})$ for all $n$ was achieved recently by Bellmann--Reiher~\cite{BR19}. 

In~{\cite[Conjecture~3.1]{BCL22a}}, Balogh--Clemen--Lidick\'{y} conjectured that for sufficiently large $n$, the unique extremal construction for the $\ell_2$-norm Tur\'{a}n problem of $\mathbb{F}$ remains $\mathbb{B}_{n}$,  which, if true, would imply that $\pi_{\ell_2}(\mathbb{F}) = 5/16$.  
They observed the upper bound $\pi_{\ell_2}(\mathbb{F}) \le 3/8$, which follows from the simple inequality:
\begin{align*}
    \norm{\mathcal{H}}_{2}
    = \sum_{e\in \partial\mathcal{H}}d^{2}_{\mathcal{H}}(e)
    \le (n-2) \cdot \sum_{e\in \partial\mathcal{H}}d_{\mathcal{H}}(e)
    = 3 (n-2) |\mathcal{H}|. 
\end{align*}
This upper bound was later improved by Brooks--Linz~{\cite[Proposition~5.1]{BL23}} to $\pi_{\ell_2}(\mathbb{F}) \le 11/32$. 
Our main result in this work is a confirmation of~{\cite[Conjecture~3.1]{BCL22a}}. 

\begin{theorem}\label{THM:AES-Fano-L2norm}
    There exist constants $\varepsilon > 0$ and $N_0$ such that the following holds for all $n \ge N_0$. 
    Suppose that $\mathcal{H}$ is an $\mathbb{F}$-free $3$-graph on $n$ vertices with $\delta_{\ell_{2}}(\mathcal{H}) \ge \left(5/4 - \varepsilon\right) n^3$. Then $\mathcal{H}$ is bipartite. 
    Consequently, for all sufficiently large $n$, we have $\mathrm{ex}_{\ell_2}(n,\mathbb{F}) = \norm{\mathbb{B}_{n}}_{2}$, and moreover, $\mathbb{B}_{n}$ is the unique extremal construction. 
\end{theorem}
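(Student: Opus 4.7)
My plan is to follow a stability plus Andr\'asfai--Erd\H{o}s--S\'os (AES) paradigm adapted to the $\ell_{2}$-norm. The refined Ahlswede--Katona inequality will control the $\ell_{2}$-norm contribution of the star at each vertex, and the AES-type extension of the Bellmann--Reiher multigraph theorem will control the link configurations forced by Fano-freeness between pairs of vertices. As a first step, a direct expansion of $d_{2,\mathcal{H}}(v) = \|\mathcal{H}\|_{2} - \|\mathcal{H}-v\|_{2}$ and summing over $v$ gives the identity
\begin{equation*}
4\|\mathcal{H}\|_{2} \;=\; \sum_{v\in V(\mathcal{H})}d_{2,\mathcal{H}}(v) \;+\; 3|\mathcal{H}|,
\end{equation*}
so the hypothesis $\delta_{\ell_{2}}(\mathcal{H}) \ge (5/4 - \varepsilon)n^{3}$ forces $\|\mathcal{H}\|_{2} \ge (5/16 - O(\varepsilon))n^{4}$. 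The first main ingredient is then an $\ell_{2}$-norm stability theorem for $\mathbb{F}$: any $\mathbb{F}$-free $\mathcal{H}$ with $\|\mathcal{H}\|_{2} \ge (5/16 - o(1))n^{4}$ is near-bipartite. Such a result can be proved by adapting Keevash--Sudakov to the $\ell_{2}$-norm, after first showing via a Cauchy--Schwarz estimate at each vertex (using $d_{2,\mathcal{H}}(v) \le 2d(v)(n-2) + \sum_{u}d_{\mathcal{H}}(\{u,v\})^{2}$) that the hypothesis forces the edge count $|\mathcal{H}|$ itself to become near-extremal. This yields an essentially unique partition $V(\mathcal{H}) = V_{1} \cup V_{2}$ with $|V_{i}| = n/2 \pm o(n)$ and at most $o(n^{3})$ non-crossing edges.

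The main work is upgrading near-bipartiteness to exact bipartiteness. Let $B \coloneqq \{v: \exists\, e \in \binom{V_{1}}{2}\cup \binom{V_{2}}{2}$ with $e \cup \{v\} \in \mathcal{H}\}$ be the set of vertices lying in a non-crossing edge, and suppose for contradiction $v \in B$. The Fano-freeness constraint between $v$ and its partners in bad edges translates, via the same reduction used by Bellmann--Reiher, into an upper bound on a weighted link multigraph built from $L_{\mathcal{H}}(v)$. The AES-type extension of the Bellmann--Reiher theorem shows that under the near-extremal codegree pattern supplied above, even a single non-crossing edge at $v$ forces $L_{\mathcal{H}}(v)$ to miss $\Omega(n^{2})$ of the crossing pairs present in the link of $v$ inside $\mathbb{B}_{n}$. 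The refined Ahlswede--Katona inequality applied to $L_{\mathcal{H}}(v)$, whose degree sequence is constrained by the partition, then converts this missing mass into a sharp upper bound $d_{2,\mathcal{H}}(v) \le (5/4 - c)n^{3}$ for an absolute constant $c > 0$. Choosing $\varepsilon < c$ contradicts the minimum-degree hypothesis, so $B = \emptyset$ and $\mathcal{H}$ is bipartite. Uniqueness of $\mathbb{B}_{n}$ as the extremal construction then follows by a direct optimization of $\|\cdot\|_{2}$ among bipartite $\mathbb{F}$-free $3$-graphs.

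The main obstacle is producing a quantitatively sharp AES statement for the Bellmann--Reiher multigraph problem and lining up its constant with the refined Ahlswede--Katona bound. Because $\|\cdot\|_{2}$ is quadratic in codegrees, a single bad edge at $v$ produces only an $O(n)$ drop in the star contribution $\sum_{u}d_{\mathcal{H}}(\{v,u\})^{2}$ unless that bad edge can be shown to suppress many other edges, and extracting such a suppression with a sharp absolute constant (rather than one that vanishes with $\varepsilon$) from a weighted multigraph stability argument is delicate. Boundary vertices whose links straddle $V_{1}$ and $V_{2}$ in roughly equal proportions are the hardest case: there the standard Ahlswede--Katona bound loses a constant factor and must be sharpened to detect the effect of a single bad edge against the full $\ell_{2}$-norm background.
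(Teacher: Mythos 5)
Your high-level architecture (stability plus an AES-type cleaning, powered by a refined Ahlswede--Katona bound and an AES version of the Bellmann--Reiher multigraph theorem) matches the paper's, but two of your load-bearing steps have genuine gaps. The first is the claim that the hypothesis $\norm{\mathcal{H}}_{2}\ge(5/16-o(1))n^{4}$ "forces the edge count $|\mathcal{H}|$ itself to become near-extremal" via a Cauchy--Schwarz estimate at each vertex, after which Keevash--Sudakov could be adapted. This does not follow: the bound $d_{2,\mathcal{H}}(v)\le\norm{L_{\mathcal{H}}(v)}_{2}+2n|L_{\mathcal{H}}(v)|$ together with Ahlswede--Katona only yields $d_{\mathcal{H}}(v)\gtrsim 0.342\,n^{2}$ from $d_{2,\mathcal{H}}(v)\ge(5/4-o(1))n^{3}$, which is strictly below the $\tfrac38 n^{2}$ needed for near-extremal edge count; closing exactly this numerical gap is the central difficulty of the problem. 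The paper does not reduce to the edge-count Tur\'an problem at all. Instead it invokes the flag-algebra result $\pi_{\ell_2}(K_5^3)=5/16$ with its stability version (Theorem~\ref{THM:2norm-K53}), and the entire role of the refined Ahlswede--Katona bound (Proposition~\ref{PROP:graph-max-S2}, whose extra hypothesis --- a large independent set of large-degree vertices in the link --- is supplied precisely by the multigraph AES theorem) is to rule out copies of $K_5^3$ among vertices of large $\ell_2$-degree (Lemma~\ref{LEMMA:main-lemma-sum-important}), reducing to the $K_5^3$-free case. Your proposal never mentions this $K_5^3$ dichotomy and offers no substitute for it.

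The second gap is the cleaning step, which you yourself identify as the main obstacle without resolving it. Your proposed mechanism --- a single non-crossing edge at $v$ suppressing $\Omega(n^{2})$ crossing pairs in $L_{\mathcal{H}}(v)$, detected by a sharpened Ahlswede--Katona bound --- is not what the paper does and is not obviously salvageable with an absolute constant $c$. The paper instead uses the vertex-extendability framework of~\cite{CL24}: edge-stability plus the statement that if $\mathcal{H}-v$ is \emph{exactly} bipartite and $\delta_{\ell_2}(\mathcal{H})$ is large then $\mathcal{H}$ is bipartite. That extendability step is proved by a probabilistic embedding: if $L_{v_{\ast}}$ had non-crossing pairs inside both $V_1$ and $V_2$, one builds a copy of $\mathbb{F}$ directly using the near-completeness of the links of the other vertices (Claims~\ref{CLAIM:near-full-degree} and~\ref{CLAIM:H-is-bipartite}). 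Note also that the correct target is $L_{v_{\ast}}[V_i]=\emptyset$ for \emph{some} $i$, not for both --- in $\mathbb{B}_n$ a vertex of $V_1$ has all of $\binom{V_2}{2}$ in its link --- so a cleaning argument that penalizes every non-crossing pair in a link would be aiming at the wrong conclusion. As written, your proposal is a plausible outline whose two hardest steps are either incorrectly justified or left open.
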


Recall the classical Andr\'{a}sfai--Erd\H{o}s--S\'{o}s Theorem, which states that for $\ell \ge 2$, every $K_{\ell+1}$-free graph on $n$ vertices with minimum degree greater than $\frac{3\ell-4}{3\ell-1}n$ is $\ell$-partite. 
Theorem~\ref{THM:AES-Fano-L2norm} can be viewed as a generalized Andr\'{a}sfai--Erd\H{o}s--S\'{o}s theorem:  instead of considering the minimum degree, we consider the minimum $\ell_{2}$-norm degree. 
Analogous results (using different types of minimum degrees) have also been obtained in recent works such as~\cite{CL24,CILLP24,CHHL24}. 

There are several key ingredients in our proof of Theorem~\ref{THM:AES-Fano-L2norm}. 
First, we establish an Andr\'{a}sfai--Erd\H{o}s--S\'{o}s--type stability theorem (Theorem~\ref{THM:AES-5-multigraph}) for a multigraph Tur\'{a}n problem, previously considered by Bellmann-Reiher~\cite{BR19} in determining the Tur\'{a}n number of $\mathbb{F}$. 
Second, we derive several upper bounds on the density of two-edge stars in a graph with given edge density, independence number, and vertex degrees, refining the classical theorem of Ahlswede--Katona~\cite{AK78}. 
Combining these two results, we establish an Erd\H{o}s--Simonovits~\cite{Sim68}-type stability theorem for $\mathbb{F}$-free $3$-graphs in the $\ell_2$-norm. 
Finally, we prove the vertex-extendability of $\mathbb{F}$ with respect to the class of bipartite $3$-graphs in $\ell_{2}$-norm. 
Together with the Erd\H{o}s--Simonovits~\cite{Sim68}-type stability theorem and the general framework developed in~\cite{CL24}, this leads to the Andr\'{a}sfai--Erd\H{o}s--S\'{o}s--type stability theorem for $\mathbb{F}$-free $3$-graphs in the $\ell_2$-norm as stated in Theorem~\ref{THM:AES-Fano-L2norm}.  

In the next section, we present several definitions and preliminary results related to graphs, multigraphs, and hypergraphs. 
In Section~\ref{SEC:density-edge-stability}, we develop several technical tools concerning the $\ell_{2}$-norm Tur\'{a}n problem of $\mathbb{F}$ and prove an Erd\H{o}s--Simonovits~\cite{Sim68}-type stability theorem for $\mathbb{F}$-free $3$-graphs in the $\ell_2$-norm. 
In Section~\ref{SEC:exact-degree-stability}, we establish the vertex-extendability of $\mathbb{F}$ in $\ell_2$-norm and complete the proof of Theorem~\ref{THM:AES-Fano-L2norm}. 
In Section~\ref{SEC:Multigraphs}, we prove an Andr\'{a}sfai--Erd\H{o}s--S\'{o}s--type stability theorem (Theorem~\ref{THM:AES-5-multigraph}) for the multigraph Tur\'{a}n problem studied by Bellmann-Reiher~\cite{BR19}, which also serves as a key ingredient in the proof of Theorem~\ref{THM:AES-Fano-L2norm}.  
Section~\ref{SEC:Remarks} concludes with some remarks. 
Throughout the remainder of the paper, we omit floor and ceiling functions when they are not essential to the arguments.  
\section{Preliminaries}\label{SEC:Preliminaries}
Given an $r$-graph $\mathcal{H}$ and a vertex set $S\subseteq V(\mathcal{H})$, we use $\mathcal{H}[S]$ to denote the \textbf{induced subgraph} of $\mathcal{H}$ on $S$, and $\mathcal{H} - S$ to denote the induced subgraph of $\mathcal{H}$ on $V(\mathcal{H})\setminus S$.

Let $r\ge 2$ and $k \ge 1$ be integers. 
Denote by $S_{k}^{r}$ the $r$-uniform $k$-edge star whose center has size $r-1$, i.e., the $r$-graph on $r-1+k$ vertices with edge set 
\begin{align*}
    \big\{ \{1,\ldots, r-1, r\},~\ldots,~\{1,\ldots, r-1, r-1+k\} \big\}. 
\end{align*}
For convenience, we omit the superscript $r$ when $r=2$, and set $\mathbb{S}_{2} \coloneqq S_{2}^{3}$. 

Given two $r$-graphs $Q$ and $\mathcal{H}$, a map $\phi \colon V(Q) \to V(\mathcal{H})$ is a \textbf{homomorphism} from $Q$ to $\mathcal{H}$ if $\phi(e) \in \mathcal{H}$ for all $e \in Q$. 
If, in addition, $\phi$ is injective, then we say that $\phi$ is an \textbf{embedding} of $Q$ into $\mathcal{H}$. 
We say that $Q$ is \textbf{$\mathcal{H}$-colorable} if there exists a homomorphism from $Q$ to $\mathcal{H}$.

Denote by $\mathrm{Hom}(Q, \mathcal{H})$ and $\mathrm{Emb}(Q, \mathcal{H})$ the sets of all homomorphisms and embeddings from $Q$ to $\mathcal{H}$, respectively (for clarity, let us assume that both $Q$ and $\mathcal{H}$ are vertex-labeled, with distinct vertices assigned distinct labels). 
The \textbf{automorphism group} of $Q$, denoted by $\mathrm{Aut}(Q)$, is simply $\mathrm{Emb}(Q, Q)$. 
Let $\mathrm{hom}(Q, \mathcal{H}) \coloneqq |\mathrm{Hom}(Q, \mathcal{H})|$ and $\mathrm{emb}(Q, \mathcal{H}) \coloneqq |\mathrm{Emb}(Q, \mathcal{H})|$.
The number of (copies of) $Q$ in $\mathcal{H}$ is defined as $\mathrm{N}(Q, \mathcal{H}) \coloneqq \mathrm{emb}(Q, \mathcal{H})/|\mathrm{Aut}(Q)|$. 

For a vertex $v\in V(\mathcal{H})$, the \textbf{$Q$-degree} of $v$ in $\mathcal{H}$, denoted by $d_{Q,\mathcal{H}}(v)$, is defined as 
\begin{align*}
    d_{Q,\mathcal{H}}(v)
    \coloneqq \mathrm{N}(Q, \mathcal{H}) - \mathrm{N}(Q, \mathcal{H}-v),
\end{align*}
where recall that $\mathcal{H}-v$ is the $r$-graph obtained from $\mathcal{H}$ by removing the vertex $v$ and all edges containing $v$.
Note that the combinatorial meaning of $d_{Q,\mathcal{H}}(v)$ is the number of copies of $Q$ in $\mathcal{H}$ that contain $v$. 
Thus, a simple double-counting argument yields
\begin{align}\label{equ:sum-Q-degree}
    \sum_{v\in V(\mathcal{H})} d_{Q,\mathcal{H}}(v)
    = v(Q) \cdot \mathrm{N}(Q, \mathcal{H}). 
\end{align}

Recall that for every real number $p \ge 1$, the \textbf{$\ell_p$-norm degree} of a vertex $v\in V(\mathcal{H})$ in $\mathcal{H}$, denoted by $d_{p,\mathcal{H}}(v)$, is defined as 
\begin{align}\label{equ:def-p-norm-degree}
    d_{p,\mathcal{H}}(v)
    \coloneqq \norm{\mathcal{H}}_{p} - \norm{\mathcal{H} - v}_{p}.
\end{align}

Let $p \ge 2$ be an integer. 
The number of $S_{p}^{r}$ in an $r$-graph $\mathcal{H}$ can be expressed as 
\begin{align*}
    \mathrm{N}(S_{p}^{r}, \mathcal{H}) 
    = \sum_{e\in \partial\mathcal{H}} \binom{d_{\mathcal{H}}(e)}{p}
    & = \frac{1}{p!} \sum_{e\in \partial\mathcal{H}} d_{\mathcal{H}}(e) \left(d_{\mathcal{H}}(e) - 1\right) \cdots \left(d_{\mathcal{H}}(e) - p + 1\right) \\
    & = \frac{1}{p!} \sum_{e\in \partial\mathcal{H}} \sum_{i=0}^{p}s(p,i) \cdot d^{i}_{\mathcal{H}}(e)
    = \frac{1}{p!} \sum_{i=1}^{p} s(p,i) \cdot \norm{\mathcal{H}}_{i}, 
\end{align*}
where $s(p,i)$ is \textbf{the Stirling number of the first kind}, and in particular, $s(p,0) = 0$.

Conversely, $\norm{\mathcal{H}}_{p}$ can be expressed as 
\begin{align*}
    \norm{\mathcal{H}}_{p}
    = \sum_{e\in \partial\mathcal{H}} d^{p}_{\mathcal{H}}(e)
    = \sum_{e\in \partial\mathcal{H}} \sum_{i=0}^{p} S(p,i) \cdot i! \cdot \binom{d_{\mathcal{H}}(e)}{i}
    = \sum_{i=1}^{p} S(p,i) \cdot i! \cdot \mathrm{N}(S_{i}^{r}, \mathcal{H}), 
\end{align*}
where $S(p,i)$ is \textbf{the Stirling number of the second kind}, and in particular, $S(p,0) = 0$.
Here, for consistency, we set $\mathrm{N}(S_{1}^{r}, \mathcal{H}) \coloneqq \sum_{e\in \partial\mathcal{H}} d_{\mathcal{H}}(e) = r |\mathcal{H}|$. 

In particular, for $p = 2$ and $r = 2$, we have 
\begin{align}\label{equ:2-norm-in-S2-edge-gp}
    \mathrm{N}(S_2, G) 
    = \frac{\norm{G}_{2} - 2 |G|}{2} 
    \quad\text{and}\quad 
    \norm{G}_{2}
    = 2 \mathrm{N}(S_2, G) + 2 |G|,  
\end{align}
and for $p = 2$ and $r = 3$, we have 
\begin{align}\label{equ:2-norm-in-S2-edge-3gp}
    \mathrm{N}(\mathbb{S}_{2}, \mathcal{H}) 
    = \frac{\norm{\mathcal{H}}_{2} - 3 |\mathcal{H}|}{2} 
    \quad\text{and}\quad 
    \norm{\mathcal{H}}_{2}
    = 2 \mathrm{N}(\mathbb{S}_{2}, \mathcal{H}) + 3 |\mathcal{H}|. 
\end{align}

\subsection{Graphs}\label{SUBSEC:graphs}
We will use the following special case of the classical Andr\'{a}sfai--Erd\H{o}s--S\'{o}s Theorem in Section~\ref{SEC:Multigraphs}. 

\begin{theorem}[Andr\'{a}sfai--Erd\H{o}s--S\'{o}s~\cite{AES74}]\label{THM:AES-Theorem}
    Every $K_3$-free graph $G$ on $n$ vertices with $\delta(G) > 2n/5$ is bipartite.
\end{theorem}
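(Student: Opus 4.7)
The plan is to proceed by contradiction: assume $G$ is not bipartite, let $C = v_1 v_2 \cdots v_{2k+1} v_1$ be a shortest odd cycle in $G$, and derive a contradiction from the minimum degree hypothesis via a double-counting of edges incident to $V(C)$. Since $G$ is $K_3$-free, we have $2k+1 \ge 5$. The main structural claims, both driven by the minimality of $C$, are: (i) $G[V(C)]$ contains no chords, so $G[V(C)] = C$; and (ii) every vertex $w \in V(G) \setminus V(C)$ has at most two neighbors on $C$.

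For (i), a chord joining $v_i v_j$ splits $C$ into two cycles whose lengths sum to $2k+3$, so exactly one is odd; a short case analysis on the parity of the gap $d = |j-i|$ shows that this odd cycle has length strictly less than $2k+1$, contradicting the choice of $C$. For (ii), if $w$ had neighbors $v_i, v_j$ on $C$, the two closed walks obtained by going through $w$ have lengths $d+2$ and $2k+3-d$; again exactly one is odd, and the minimality of $C$ together with triangle-freeness forces $d = 2$. Thus the neighbors of $w$ on $C$ are pairwise at cyclic distance exactly $2$, and a quick check on the cyclic structure of $C_{2k+1}$ shows at most two such vertices exist.

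Combining (i) and (ii), I can bound the total degree incident to $V(C)$:
\begin{align*}
    \sum_{v \in V(C)} d_G(v)
    \;=\; 2\,|E(G[V(C)])| + e_G(V(C), V(G)\setminus V(C))
    \;\le\; 2(2k+1) + 2\bigl(n - (2k+1)\bigr) \;=\; 2n.
\end{align*}
On the other hand, the hypothesis $\delta(G) > 2n/5$ yields
\begin{align*}
    \sum_{v \in V(C)} d_G(v) \;>\; (2k+1)\cdot \frac{2n}{5} \;\ge\; 5 \cdot \frac{2n}{5} \;=\; 2n,
\end{align*}
a contradiction. Hence $G$ contains no odd cycle at all, so it is bipartite.

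The only step that requires any care is the chord-free and limited-neighbor analysis, which is a straightforward parity argument on walk lengths; the rest is just the clean degree-sum inequality. No further obstacles are anticipated.
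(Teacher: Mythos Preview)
The paper does not prove this statement; it merely cites it as the classical special case of the Andr\'asfai--Erd\H{o}s--S\'os theorem and uses it as a black box in Section~\ref{SEC:Multigraphs}. Your argument is the standard textbook proof and is correct: the shortest-odd-cycle analysis gives that $G[V(C)]$ is chordless and that every outside vertex has at most two neighbours on $C$, and the resulting degree-sum bound $\sum_{v\in V(C)} d_G(v)\le 2n$ contradicts $(2k+1)\delta(G)>2n$ once $2k+1\ge 5$. There is nothing to compare against in the paper, and nothing to fix in your write-up.
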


Given a graph $G$ on $n$ vertices with $m$ edges, the classical theorem of Ahlswede--Katona~\cite{AK78} provides a tight upper bound for the maximum number of $S_2$ in $G$. 
For our purposes, we will use the following asymptotic version of their result. 

\begin{theorem}[Ahlswede--Katona~\cite{AK78}]\label{THM:max-S2-density}
    Suppose that $G$ is a graph on $n$ vertices with $x n^2$ edges. 
    Then 
    \begin{align*}
        \frac{\mathrm{N}(S_{2}, G)}{n^3}
        \le \max\left\{ \frac{(1-2 x)^{3/2}+4 x-1}{2},~\sqrt{2} x^{3/2} \right\} + o_{n}(1).
    \end{align*}
    Consequently, by~\eqref{equ:2-norm-in-S2-edge-gp},
    \begin{align*}
        \frac{\norm{G}_{2}}{n^3}
        \le \max\left\{ (1-2 x)^{3/2}+4 x-1,~2\sqrt{2} x^{3/2} \right\} + o_{n}(1).
    \end{align*}
\end{theorem}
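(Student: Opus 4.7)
The plan is to derive this asymptotic bound directly from the exact theorem of Ahlswede--Katona, which asserts that for fixed $n$ and $m$, the maximum of $\mathrm{N}(S_2, G) = \sum_{v \in V(G)} \binom{d_G(v)}{2}$ over all graphs $G$ on $n$ vertices with $m$ edges is attained by one of two families of graphs: a \emph{quasi-clique} (a $K_k$ padded with at most $k-1$ extra edges incident to a single additional vertex to meet the edge count) or a \emph{quasi-star} (the complement of a quasi-clique, namely $K_n$ with the edges of a $K_k$ removed, again padded by $O(n)$ edges). Hence it suffices to evaluate $\mathrm{N}(S_2, \cdot)/n^3$ asymptotically on both candidate extremizers in the regime $m = xn^2$, and to verify that these two values match the two expressions inside the stated maximum.

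For the quasi-clique, I would set $k = \sqrt{2x}\,n + O(1)$, the unique solution of $\binom{k}{2} = xn^2 + O(n)$, and compute
\begin{align*}
\mathrm{N}(S_2, K_k) = k\binom{k-1}{2} = \frac{k^3}{2} + O(n^2) = \sqrt{2}\,x^{3/2}\,n^3 + O(n^2),
\end{align*}
which produces the second term in the maximum (the padding of at most $O(n)$ extra edges contributes only $O(n^2)$ to the $S_2$-count and is absorbed by the $o_n(1)$ error).

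For the quasi-star, I would set $k = \sqrt{1-2x}\,n + O(1)$, the unique solution of $\binom{n}{2} - \binom{k}{2} = xn^2 + O(n)$. Then the $k$ vertices in the removed clique have degree $n-k$, while the remaining $n-k$ vertices have degree $n-1$, so
\begin{align*}
\mathrm{N}(S_2, K_n \setminus K_k) = k\binom{n-k}{2} + (n-k)\binom{n-1}{2}.
\end{align*}
Setting $c := k/n = \sqrt{1-2x}$ and using the identity $1 - c^2 = 2x$, a routine algebraic simplification reduces the right-hand side to $\frac{(1-2x)^{3/2} + 4x - 1}{2}\,n^3 + O(n^2)$, which matches the first term in the maximum. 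Taking the larger of these two asymptotic values gives the stated inequality on $\mathrm{N}(S_2,G)/n^3$.

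The consequence for $\norm{G}_2$ is then immediate from the identity $\norm{G}_2 = 2\mathrm{N}(S_2, G) + 2|G|$ recorded in~\eqref{equ:2-norm-in-S2-edge-gp}, together with the observation that $2|G| = 2xn^2 = o(n^3)$, so doubling the bound on $\mathrm{N}(S_2, G)/n^3$ is lossless up to the $o_n(1)$ error. There is no real obstacle here: the substantive content is entirely contained in the Ahlswede--Katona classification of extremizers, which we quote, and all that remains is the routine asymptotic expansion of the two candidate graphs on each side of the crossover point.
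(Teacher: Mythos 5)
Your proposal is correct and matches the intended reading of this statement: the paper itself gives no proof, quoting the exact Ahlswede--Katona classification from~\cite{AK78} and immediately describing the two asymptotic extremizers $S(n,\beta_1 n)$ and $C(n,\beta_2 n)$, which are precisely your quasi-star and quasi-clique. Your asymptotic evaluations check out: for the quasi-clique, $k^3/2=(2x)^{3/2}n^3/2=\sqrt{2}\,x^{3/2}n^3$, and for the quasi-star, writing $c=\sqrt{1-2x}$ one gets $\tfrac{1}{2}(1-c)(2x+c)n^3=\tfrac{1}{2}\left(c^3+4x-1\right)n^3$, as claimed.
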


Before introducing the next result, which refines Theorem~\ref{THM:max-S2-density}, let us briefly describe the constructions that achieve asymptotically the bounds in Theorem~\ref{THM:max-S2-density}. 

Given positive integers $n \ge k \ge 0$, define the following graphs. 
\begin{itemize}
    \item Let $C(n,k)$ denote the $n$-vertex graph consisting of a copy of $K_{k}$ and $n-k$ isolated vertices. 
    \item Let $S(n,k)$ denote the $n$-vertex graph that is the complement of $C(n,k)$. 
\end{itemize} 

Fix $x \in [0,1/2]$ and let 
\begin{align*}
    \beta_{1} \coloneqq \sqrt{1-2x} \quad\text{and}\quad
    \beta_2 \coloneqq \sqrt{2x}. 
\end{align*}
The upper bounds in Theorem~\ref{THM:max-S2-density} are asymptotically attained by the constructions $S\left(n, \beta_1 n \right)$ for $x \in [0, 1/4]$, and $C\left(n, \beta_2 n \right)$ for $x \in [1/4, 1/2]$. 

Unfortunately, in certain cases (e.g. Lemma~\ref{LEMMA:main-lemma-sum-important}), the bounds in Theorem~\ref{THM:max-S2-density} are insufficient to provide the desired lower bound on the number of edges in $G$. 
Therefore, a refinement of the theorem is needed. 
Note that for $x \in [1/4, 1/2]$, the extremal construction $C\left(n, \beta_2 n \right)$ has the property that in every independent set, all but at most one vertex have degree zero. 
This observation suggests that if we additionally assume the graph $G$ contains a large independent set in which every vertex has large degree, then the bound in Theorem~\ref{THM:max-S2-density} can be substantially improved. 
The following result confirms that this is indeed the case for $x \in [17/50, 7/20]$ (although we believe it should be the case for all $x > 1/4$, we are currently unable to prove this more general statement). 
Since the proof is rather tedious and involves extensive calculations, we present it in a separate paper~\cite{HLZ25star}.

\begin{proposition}\label{PROP:graph-max-S2} 
    Suppose that $G$ is an $n$-vertex graph with $x n^2$ edges, where $x$ is a real number in $\left[\frac{17}{50}, \frac{7}{20}\right]$, and suppose that $I \subseteq V(G)$ is an independent set of size $\alpha n$.
    If either of the following conditions holds$\colon$
    \begin{enumerate}[(i)]
        \item\label{PROP:graph-max-S2-07} $\alpha \in \left[\frac{17}{100}, \frac{23}{100}\right]$ and $d_{G}(v) \ge \alpha n$ for every $v\in I$; 
        \item\label{PROP:graph-max-S2-022-033} $\alpha \in \left[\frac{1}{3}, \frac{2}{5}\right]$ and $d_{G}(v) \ge \frac{n}{5}$ for every $v\in I$,  
    \end{enumerate}
    then 
    \begin{align*}
        \frac{\mathrm{N}(S_{2}, G)}{n^3}
        \le \max\left\{\frac{\alpha^3 + \left(2x-\alpha^2\right)\left(2x+\alpha^2\right)^{1/2}}{2},~\frac{\left(1-2x\right)^{3/2}+4x-1}{2}\right\} + o_{n}(1). 
    \end{align*}
    Consequently, by~\eqref{equ:2-norm-in-S2-edge-gp},
    \begin{align*}
        \frac{\norm{G}_{2}}{n^3}
        \le \max\left\{ \alpha^3 + \left(2x-\alpha^2\right)\left(2x+\alpha^2\right)^{1/2},~\left(1-2x\right)^{3/2}+4x-1\right\} + o_{n}(1). 
    \end{align*}
\end{proposition}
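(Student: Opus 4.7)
The plan is to identify the two candidate extremal constructions corresponding to the two terms in the $\max$, apply a compression argument to show that any admissible $G$ is dominated in $S_2$-density by one of them, and then take the maximum.

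\textbf{Extremal constructions.} The second expression $((1-2x)^{3/2}+4x-1)/2$ is exactly the $S_2$-density of $S(n, \sqrt{1-2x}\,n)$ by Theorem~\ref{THM:max-S2-density}; this graph has a natural independent set of size $\sqrt{1-2x}\,n$ whose vertices all have degree $(1-\sqrt{1-2x})n$, and both conditions (i) and (ii) are satisfied for the indicated parameter ranges. For the first expression, I would introduce the construction $G^{\ast}=G^{\ast}(\alpha,x)$: on $n$ vertices, take an independent set $I$ of size $\alpha n$, a disjoint clique $K$ of size $kn$ with $k=\sqrt{\alpha^{2}+2x}-\alpha$, a complete bipartite graph between $I$ and $K$, and $(1-\alpha-k)n$ additional isolated vertices. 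The value of $k$ is chosen so that $|G^{\ast}|=\alpha k n^{2}+\binom{kn}{2}=xn^{2}+O(n)$, and a direct computation using $d(v)=kn$ for $v\in I$ and $d(v)=(\alpha+k)n-1$ for $v\in K$ yields
\begin{align*}
    \frac{\mathrm{N}(S_{2}, G^{\ast})}{n^{3}}
    = \frac{\alpha k^{2}+k(\alpha+k)^{2}}{2}+o_{n}(1)
    = \frac{\alpha^{3}+(2x-\alpha^{2})(2x+\alpha^{2})^{1/2}}{2}+o_{n}(1),
\end{align*}
after substituting $y\coloneqq\sqrt{\alpha^{2}+2x}$ and $k=y-\alpha$. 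Each $v\in I$ in $G^{\ast}$ has degree $kn=\sqrt{\alpha^2+2x}\,n-\alpha n$, which lies above $\alpha n$ under (i) and above $n/5$ under (ii) throughout the stated parameter windows. So both constructions are admissible and realize the claimed bound.

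\textbf{Reduction via compression.} Using the identity $\mathrm{N}(S_{2},G)=\tfrac{1}{2}\sum_{v}d_{G}(v)^{2}-xn^{2}$, it suffices to upper bound $\sum_{v}d_{G}(v)^{2}$. Starting from an admissible $G$ with $|G|=xn^2$, I would perform edge-swaps that preserve $|G|$, the independence of $I$, and the minimum-degree condition on $I$, while not decreasing $\sum d(v)^{2}$ (this uses convexity of $d\mapsto d^{2}$). Writing $J\coloneqq V(G)\setminus I$, the two principal moves are: (a) inside $J$, shift an edge $vw$ to $uw$ whenever $d(u)\ge d(v)$, driving $G[J]$ toward a clique; and (b) redirect $I$-to-$J$ edges so that they concentrate on the highest-degree vertices of $J$, provided each $v\in I$ retains the required lower bound on its degree. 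Each move is monotone in $\sum d(v)^{2}$. The argument should terminate in a graph whose $J$-part is a clique $K$ together with isolated vertices and whose bipartite edges from $I$ land precisely on $K$—this is $G^{\ast}$—unless the minimum-degree constraint forces edges from $I$ to spread evenly across $J$, in which case the terminal form is $S(n,\sqrt{1-2x}\,n)$ (up to isolated vertices). Comparing the $\sum d(v)^{2}$ values at the two terminal shapes gives the proposition.

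\textbf{Main obstacle.} The hard part is executing the compression while respecting the minimum-degree condition on $I$, which forbids some otherwise natural swaps, and ruling out hybrid terminal configurations—for instance a smaller clique $K'\subseteq J$ of size $k'n<kn$ together with a partial bipartite graph to $I$ plus a few long stars elsewhere—that a priori might yield a larger $\sum d(v)^{2}$ than both $G^{\ast}$ and $S(n,\sqrt{1-2x}\,n)$. The three-parameter ranges $\alpha\in[\tfrac{17}{100},\tfrac{23}{100}]$ (with the $\alpha n$ degree threshold) and $\alpha\in[\tfrac{1}{3},\tfrac{2}{5}]$ (with the $n/5$ threshold), combined with $x\in[\tfrac{17}{50},\tfrac{7}{20}]$, appear to be precisely what is needed so that all such hybrids are strictly dominated by the two advertised constructions; verifying this via Lagrange-multiplier or direct numerical comparisons on the boundary of the feasible degree-sequence polytope is the source of the extensive calculations deferred to~\cite{HLZ25star}.
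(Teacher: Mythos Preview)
The paper does not contain a proof of this proposition: it explicitly states that ``since the proof is rather tedious and involves extensive calculations, we present it in a separate paper~\cite{HLZ25star}.'' Consequently there is no in-paper argument to compare your proposal against. What the paper does provide is a description of the two extremal constructions $\hat{S}(n,\alpha n,\beta_3 n)$ and $S(n,\beta_1 n)$, and your $G^{\ast}$ is exactly $\hat{S}(n,\alpha n,\beta_3 n)$ with $\beta_3=\sqrt{\alpha^2+2x}-\alpha$; your computation of its $S_2$-density and your check that both constructions satisfy the hypotheses are correct.

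Regarding your compression outline: the reduction to maximizing $\sum_v d_G(v)^2$ is standard and correct, and the edge-swap moves (a) and (b) do not decrease this sum by convexity. However, the assertion that the process terminates in exactly one of the two advertised shapes is not justified, and you correctly flag this as the main obstacle. In particular, your dichotomy ``either edges from $I$ concentrate on a clique $K$, or the minimum-degree constraint forces them to spread evenly, yielding $S(n,\sqrt{1-2x}\,n)$'' is too coarse: the terminal configurations of such a constrained compression are parametrized by (at least) the size of the clique in $J$, the number of $I$-vertices pinned at the degree threshold, and the distribution of the remaining $I$--$J$ edges, and it is not clear that this family reduces to a one-parameter interpolation between your two endpoints. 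The restricted parameter windows for $x$ and $\alpha$ presumably enter to make the resulting multivariate optimization tractable, but you have not carried out any of that analysis. So your proposal is a reasonable plan with the correct target constructions, but it is not a proof---and since you yourself defer the decisive step to~\cite{HLZ25star}, it is at the same level of completeness as the paper itself.
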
 

Let us briefly explain the constructions corresponding to the bounds in Proposition~\ref{PROP:graph-max-S2}. 

Given positive integers $n, k, \ell$, let $\hat{S}(n, k, \ell)$ denote the $n$-vertex graph formed by the union of $S(k+\ell,k)$ and $n-k-\ell$ additional isolated vertices.

Fix $x \in [0,1/2]$ and $\alpha \in [0,1]$. 
Let 
\begin{align*} 
    \beta_3 \coloneqq \sqrt{\alpha ^2+2 x}-\alpha. 
\end{align*}
The upper bounds in Proposition~\ref{PROP:graph-max-S2} are achieved asymptotically by the constructions $\hat{S}\left(n,\alpha n, \beta_3 n \right)$ and $S\left(n, \beta_1 n \right)$, respectively.

\subsection{Multigraphs}\label{SUBSEC:Multigraphs}
For a positive integer $m$, an $m$-tuple $\vec{\mathcal{G}}= (G_1, \ldots, G_m)$ of graphs on a common vertex set $V$ will be referred to as an {\textbf{$m$-multigraph}}. 
We use $V(\vec{\mathcal{G}})$ to denote the common vertex set of $\vec{\mathcal{G}}$. 
The size (i.e., the number of edges) of $\vec{\mathcal{G}}$ is defined as $|\vec{\mathcal{G}}| \coloneqq \sum_{i=1}^{m} |G_i|$.

For a vertex $v \in V(\vec{\mathcal{G}})$, the degree of $v$ in $\vec{\mathcal{G}}$ is given by $d_{\vec{\mathcal{G}}}(v) \coloneqq \sum_{i=1}^{m} d_{G_i}(v)$. 
Denote by $\delta(\vec{\mathcal{G}})$ the minimum degree of $\vec{\mathcal{G}}$. 

For every pair $\{u,v\} \subseteq V(\vec{\mathcal{G}})$, define the \textbf{color set} of $\{u,v\}$ as 
\begin{align*}
    C_{\vec{\mathcal{G}}}(uv)
    \coloneqq 
    \left\{i \in [m] \colon uv \in G_i\right\}.
\end{align*}
The {\textbf{multiplicity}} of $\{u,v\}$ is defined as  $\mu_{\vec{\mathcal{G}}}(uv) \coloneqq |C_{\vec{\mathcal{G}}}(uv)|$, which is the number of graphs among $G_1, \ldots,G_m$ in which $\{u,v\}$ appears as an edge. 
We will omit the subscript $\vec{\mathcal{G}}$ when it is clear from  context.

Denote by $\mathbb{K}_{4}$ the $3$-multigraph $(H_1, H_2, H_3)$ on vertex set $\{1,2,3,4\}$, where 
\begin{align*}
    H_1 = \big\{\{1,2\},~\{3,4\}\big\}, \quad 
    H_2 = \big\{\{1,3\},~\{2,4\}\big\}, \quad\text{and}\quad 
    H_3 = \big\{\{1,4\},~\{2,3\}\big\}. 
\end{align*}
Given an $m$-multigraph $\vec{\mathcal{G}}= (G_1, \ldots, G_m)$ on $V$, we say that $\vec{\mathcal{G}}$ contains a copy of $\mathbb{K}_4$ if there exists a triple $\{i,j,k\} \subseteq [m]$ and an injective map $\psi \colon \{1,2,3,4\} \to V$ such that 
\begin{itemize}
    \item $\big\{\{\psi(1), \psi(2)\},~\{\psi(3), \psi(4)\}\big\} \subseteq G_{i}$, 
    \item $\big\{\{\psi(1), \psi(3)\},~\{\psi(2), \psi(4)\}\big\} \subseteq G_{j}$, and  
    \item $\big\{\{\psi(1), \psi(4)\},~\{\psi(2), \psi(3)\}\big\} \subseteq G_{k}$. 
\end{itemize}
We say that $\vec{\mathcal{G}}$ is \textbf{$\mathbb{K}_{4}$-free} if it does not contain any copy of $\mathbb{K}_4$. 
Denote by $\mathrm{ex}_{m}(n,\mathbb{K}_{4})$ the maximum size of a $\mathbb{K}_4$-free $m$-multigraph on $n$ vertices. 

The multigraph Tur\'{a}n problem plays a crucial role in the study of the Tur\'{a}n number of Fano plane and has been used in all the aforementioned works~\cite{DCF00,FS05,KS05,BR19}.  
The exact value of $\mathrm{ex}_{4}(n,\mathbb{K}_{4})$ was determined by F{\" u}redi--Simonovits in~\cite{FS05}, where they proved that for every $n \ge 4$, 
\begin{align*}
    \mathrm{ex}_{4}(n,\mathbb{K}_{4}) = 2\binom{n}{2} + 2 \left\lfloor n^2/4 \right\rfloor.
\end{align*}
In their study of $\mathrm{ex}(n,\mathbb{F})$, Bellmann--Reiher~{\cite[Proposition~3.4]{BR19}} investigated the value of $\mathrm{ex}_{5}(n,\mathbb{K}_{4})$ and proved that for every $n \ge 4$,  
\begin{align}\label{equ:Bellmann-Reiher-5-multigraph-upper-bound}
    \mathrm{ex}_{5}(n,\mathbb{K}_{4}) \le \frac{7n^2-n}{4}. 
\end{align}
They also stated the following exact bound, though without providing a proof (due to its tediousness):
\begin{align}\label{equ:Bellmann-Reiher-5-multigraph}
    \mathrm{ex}_{5}(n,\mathbb{K}_{4})
    = 
    \begin{cases}
        5\left\lfloor n^2/3 \right\rfloor, & \mathrm{if}\quad n \le 12, \\[0.5em]
        2\binom{n}{2} + 3\left\lfloor n^2/4 \right\rfloor, & \mathrm{if}\quad n \ge 13. 
    \end{cases}
\end{align}
Here, the first bound arises from the construction $\vec{\mathcal{G}}= (G_1, \ldots, G_5)$, where each $G_i$ is a $K_{4}$-free graph on $n$ vertices with exactly $\lfloor n^2/3 \rfloor$ edges.
The second bound is obtained from the following construction of $\vec{\mathcal{G}}= (G_1, \ldots, G_5)$:

Let $V_1 \cup V_2 = [n]$ be a balanced bipartition. 
Define 
\begin{align*}
    G_1 = G_2 = \binom{V_1}{2} \cup K[V_1, V_2], \quad 
    G_3 = G_4 = \binom{V_2}{2} \cup K[V_1, V_2], \quad\text{and}\quad 
    G_5 = K[V_1, V_2], 
\end{align*}
where $K[V_1, V_2]$ denotes the complete bipartite graph with parts $V_1$ and $V_2$.

Motivated by the second construction, we introduce the following definitions. 

Let $\vec{\mathcal{G}}=(G_1,\ldots, G_5)$ be a $5$-multigraph and $V_1 \cup V_2 = V(\vec{\mathcal{G}})$ be a partition of its vertex set. 
\begin{itemize}
    \item We say the partition $V_1 \cup V_2 = V(\vec{\mathcal{G}})$ is \textbf{nice} if, up to a permutation of the indices, the following conditions hold: 
    \begin{enumerate}[(i)]
        \item $G_{1}[V_2] = G_{2}[V_2] = G_{3}[V_1] = G_{4}[V_1] = G_{5}[V_1] = \emptyset$, and 
        \item $\mu(uv) \le 2$ for every pair $\{u,v\} \subseteq V_2$. 
    \end{enumerate}
    \item We say the partition $V_1 \cup V_2 = V(\vec{\mathcal{G}})$ is \textbf{good} if, up to a permutation of the indices, the following holds: 
    \begin{align*}
        G_{1}[V_2] = G_{2}[V_2] = G_{3}[V_1] = G_{4}[V_1] = G_{5}[V_1] = G_{5}[V_2]= \emptyset. 
    \end{align*}
\end{itemize}

Observe that every good partition is also a nice partition, but the converse does not necessarily hold. 

We establish the following Andr\'{a}sfai--Erd\H{o}s--S\'{o}s--type stability theorems for $\mathbb{K}_{4}$-free $5$-multigraphs, which refine the result of Bellmann--Reiher. 

\begin{theorem}\label{THM:AES-5-multigraph}
    Let $n \in \mathbb{N}$ and $\vec{\mathcal{G}}=(G_1,\ldots, G_5)$ be an $n$-vertex $\mathbb{K}_{4}$-free $5$-multigraph. 
    \begin{enumerate}[(i)]
        \item\label{THM:AES-5-multigraph-a} If $\delta(\vec{\mathcal{G}}) \ge 44n/13$, then $\vec{\mathcal{G}}$ admits a nice partition. 
        \item\label{THM:AES-5-multigraph-b} If $\delta(\vec{\mathcal{G}}) \ge 58n/17$, then $\vec{\mathcal{G}}$ admits a good partition. 
    \end{enumerate}
\end{theorem}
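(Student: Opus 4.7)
The plan is to apply the classical Andr\'{a}sfai--Erd\H{o}s--S\'{o}s theorem (Theorem~\ref{THM:AES-Theorem}) to an auxiliary graph derived from $\vec{\mathcal{G}}$, and then to refine the resulting bipartition using the $\mathbb{K}_4$-free condition on a color-by-color basis. Let $H$ be the graph on $V(\vec{\mathcal{G}})$ whose edges are the pairs $\{u,v\}$ with $\mu_{\vec{\mathcal{G}}}(uv)\ge 3$; this choice is motivated by the Bellmann--Reiher extremal construction~\eqref{equ:Bellmann-Reiher-5-multigraph}, for which $H$ is precisely the complete bipartite graph $K[V_1,V_2]$.

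First, I would establish a lower bound on $\delta(H)$: from the inequality $d_{\vec{\mathcal{G}}}(v)=\sum_{u\ne v}\mu(uv)\le 5\,d_H(v)+2\bigl(n-1-d_H(v)\bigr)$ combined with $\delta(\vec{\mathcal{G}})\ge 44n/13$, one obtains $\delta(H)\ge (18n+26)/39>2n/5$. Next, I would show that $H$ is triangle-free. Assuming towards contradiction that $\{u,v,w\}$ is a triangle of $H$, the plan is to locate a fourth vertex $x$ such that the three ``opposite-pair'' color-set intersections
\[
C(uv)\cap C(wx),\quad C(uw)\cap C(vx),\quad C(vw)\cap C(ux)
\]
admit a system of distinct representatives via Hall's condition; the three representatives would then provide three distinct colors $i,j,k$ witnessing a forbidden copy of $\mathbb{K}_4$ on $\{u,v,w,x\}$. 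The existence of such an $x$ is to be extracted from a pigeonhole argument on $\sum_{x}\bigl(\mu(xu)+\mu(xv)+\mu(xw)\bigr)$, whose lower bound is dictated by $\delta(\vec{\mathcal{G}})\ge 44n/13$.

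With $H$ established as $K_3$-free and $\delta(H)>2n/5$, Theorem~\ref{THM:AES-Theorem} will yield a bipartition $V_1\cup V_2$ of $V(\vec{\mathcal{G}})$ in which every pair of multiplicity $\ge 3$ crosses the partition. I would then analyze the colors appearing on intra-part edges (which have multiplicity $\le 2$): another round of $\mathbb{K}_4$-tests on four-vertex configurations should show that, up to a permutation of the indices, exactly two colors (say $1,2$) cover the $V_1$-intra edges, exactly two (say $3,4$) cover the $V_2$-intra edges, and color $5$ avoids $V_1$, producing a nice partition in the sense of Section~\ref{SUBSEC:Multigraphs}. For part~(ii), the stronger hypothesis $\delta(\vec{\mathcal{G}})\ge 58n/17$ provides just enough additional slack; the same Hall-type argument, applied to a hypothetical edge $\{u,v\}\in G_5[V_2]$ together with two vertices $x,y\in V_1$ of high multiplicity to both $u$ and $v$, would rule out any edge of $G_5$ within $V_2$, upgrading the nice partition to a good one.

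The hard part will be verifying that $H$ is triangle-free. Pairwise multiplicity $\ge 3$ on three vertices $u,v,w$ does not by itself force $\mathbb{K}_4$: the three intersections above may collapse to a common singleton and thereby block Hall's condition. Overcoming this obstruction requires a delicate case analysis on the common color set $C(uv)\cap C(uw)\cap C(vw)$, balancing the ``missing'' multiplicity budget $5(n-1)-d_{\vec{\mathcal{G}}}(v)\le 21n/13$ against the $\mathbb{K}_4$-free constraint. The precise thresholds $44n/13$ and $58n/17$ are essentially dictated by this counting.
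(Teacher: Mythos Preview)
Your plan follows essentially the same architecture as the paper's proof: the same auxiliary graph $H$ on pairs of multiplicity $\ge 3$, the same minimum-degree computation giving $\delta(H)>2n/5$, the same appeal to Theorem~\ref{THM:AES-Theorem} once $H$ is shown to be triangle-free, and then a color analysis on the two parts. Your identification of the triangle-freeness of $H$ as the hard step is exactly right, and the paper handles it (Lemma~\ref{LEM:key-lemma-no-333-10-cases}) precisely by the kind of case analysis you anticipate: it first proves a short structural lemma about $\mathbb{K}_4$-free $5$-multigraphs on four vertices (Lemma~\ref{LEMMA:f5n-basic-property}), then uses the pigeonhole bound you wrote to find a fourth vertex $w$ with $\mu(wx)+\mu(wy)+\mu(wz)\ge 11$ and systematically eliminates all ten possible multiplicity types $(i,j,k)$ with $\min\{i,j,k\}\ge 3$. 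Your Hall/SDR framing of the contradiction is a clean way to describe the endgame, but in practice the obstructions you flag (the three opposite-pair intersections collapsing) are what force the paper into the full ten-case breakdown rather than a single pigeonhole-plus-Hall stroke.

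Where your plan is thinner than the paper is the step \emph{after} the bipartition $V_1\cup V_2$ is obtained. Showing that only two colors appear inside $V_1$ is not just ``another round of $\mathbb{K}_4$-tests on four-vertex configurations''; the paper needs a connectivity-type argument. Specifically, it first shows (using degree counts against $|V_2|\le n/2$) that any two vertices of $V_1$ have a common neighbor of multiplicity exactly $2$ inside $V_1$, then that the intra-$V_1$ graph contains no rainbow triangle (this is the four-vertex $\mathbb{K}_4$-test, but against a vertex in $V_2$ with multiplicity $5$ to all three), and combines these to propagate a single color pair $\{1,2\}$ across all multiplicity-$2$ edges in $V_1$. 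Ruling out colors $1,2$ inside $V_2$ is a separate degree-counting argument. For part~(ii) the paper does not proceed as you propose (testing a hypothetical $G_5$-edge in $V_2$ against two $V_1$-vertices); instead it simply reruns the entire $V_1$-side color analysis symmetrically on $V_2$, where the tighter bound $58n/17$ is needed because one must now work against $|V_1|\le \tfrac{5n}{3}-\tfrac{\delta(\vec{\mathcal{G}})}{3}$ rather than $|V_2|\le n/2$.
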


The proof of Theorem~\ref{THM:AES-5-multigraph} is presented in Section~\ref{SEC:Multigraphs}. 

\subsection{$3$-graphs}\label{SUBSEC:3-graphs}
Let $\mathcal{H}$ be a $3$-graph.  
For every vertex $v\in V(\mathcal{H})$, the \textbf{neighborhood} $N_{\mathcal{H}}(v)$ of $v$ in $\mathcal{H}$ is 
\begin{align*}
    N_{\mathcal{H}}(v)
    \coloneqq \left\{u \in V(\mathcal{H})\setminus \{v\} \colon \text{there exists $E \in \mathcal{H}$ such that $\{u,v\} \subseteq E$}\right\}.
\end{align*}
It follows from the definition that $u \in N_{\mathcal{H}}(v)$ if and only if $u \neq v$ and $d_{\mathcal{H}}(uv) \neq 0$. 

Recall from Definition~\eqref{equ:def-p-norm-degree} that the $\ell_2$-norm degree of $v$ in $\mathcal{H}$ is 
\begin{align}\label{equ:def-2norm-degree}
    d_{2,\mathcal{H}}(v)
    = \norm{\mathcal{H}}_{2} - \norm{\mathcal{H} - v}_{2}
    & = \sum_{u \in N_{\mathcal{H}}(v)}d^{2}_{\mathcal{H}}(uv) + \sum_{e\in L_{\mathcal{H}}(v)}\left(2\cdot d_{\mathcal{H}}(e) - 1\right) \notag \\
    & = \sum_{u \in N_{\mathcal{H}}(v)}d^{2}_{\mathcal{H}}(uv) + 2 \sum_{e\in L_{\mathcal{H}}(v)} d_{\mathcal{H}}(e) - d_{\mathcal{H}}(v). 
\end{align}
where the straightforward calculations leading to the second equality can be found in~{\cite[Lemma~3.1]{CILLP24}}. 

Observe that $d_{\mathcal{H}}(uv)$ is the same as the degree of $u$ in the link graph $L_{\mathcal{H}}(v)$.
Therefore, $\sum_{u \in N_{\mathcal{H}}(v)}d^{2}_{\mathcal{H}}(uv)$ is simply the $\ell_{2}$-norm of $L_{\mathcal{H}}(v)$. 
Combining it with the trivial upper bound 
\begin{align*}
    \sum_{e\in L_{\mathcal{H}}(v)} d_{\mathcal{H}}(e) 
    \le \sum_{e\in L_{\mathcal{H}}(v)} (n-2) 
    \le |L_{\mathcal{H}}(v)| n, 
\end{align*}
we obtain the following crude but useful upper bound for $d_{2,\mathcal{H}}(v)$:
\begin{align}\label{equ:raw-upper-bound-2norm-degree}
    d_{2,\mathcal{H}}(v) 
    \le \norm{L_{\mathcal{H}}(v)}_{2} + 2 |L_{\mathcal{H}}(v)| n. 
\end{align}

Another useful expression of $d_{2,\mathcal{H}}(v)$ that is derived from~\eqref{equ:2-norm-in-S2-edge-3gp} is as follows:
\begin{align}\label{equ:defb-2norm-degree}
    d_{2,\mathcal{H}}(v)
    & = \norm{\mathcal{H}}_{2} - \norm{\mathcal{H} - v}_{2} \notag \\[0.3em]
    & = \big(2 \mathrm{N}(\mathbb{S}_{2}, \mathcal{H}) + 3|\mathcal{H}|\big) - \big(2 \mathrm{N}(\mathbb{S}_{2}, \mathcal{H}-v) + 3|\mathcal{H}-v|\big) 
    = 2 d_{\mathbb{S}_{2}, \mathcal{H}}(v) + 3 d_{\mathcal{H}}(v). 
\end{align}

Using this identity, one can easily derive the following results, which are special cases of~{\cite[Lemma~3.5]{CILLP24}} and~{\cite[Lemma~3.7]{CILLP24}}.   

\begin{lemma}\label{LEMMA:CILLP24-35-37}
    Let $\mathcal{H}$ be an $n$-vertex $3$-graph. 
    The following statements hold. 
    \begin{enumerate}[(i)]
        \item\label{LEMMA:CILLP24-35-37-a} We have $\sum\limits_{v\in V(\mathcal{H})}d_{2,\mathcal{H}}(v) =  4\norm{\mathcal{H}}_{2} -3|\mathcal{H}|$.
        In particular, 
        \begin{align*}
            \Big| \sum_{v\in V(\mathcal{H})}d_{2,\mathcal{H}}(v) - 4\norm{\mathcal{H}}_{2} \Big| 
            \le n^3.
        \end{align*}
        \item\label{LEMMA:CILLP24-35-37-b} For every subgraph $\mathcal{H}' \subseteq \mathcal{H}$, it holds that $\norm{\mathcal{H}}_{2} - \norm{\mathcal{H}'}_{2} \le 6 n \cdot |\mathcal{H}\setminus\mathcal{H}'|$.
    \end{enumerate}
\end{lemma}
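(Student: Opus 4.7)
The plan is to derive both parts from identities already displayed in the excerpt, so that only bookkeeping is needed.

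For part~\eqref{LEMMA:CILLP24-35-37-a}, I would start from the identity
\[
d_{2,\mathcal{H}}(v) = 2 d_{\mathbb{S}_{2}, \mathcal{H}}(v) + 3 d_{\mathcal{H}}(v)
\]
given in~\eqref{equ:defb-2norm-degree}, sum over all $v \in V(\mathcal{H})$, and apply two elementary double-counting facts: $\sum_{v} d_{\mathcal{H}}(v) = 3|\mathcal{H}|$ and, by~\eqref{equ:sum-Q-degree} with $Q = \mathbb{S}_2$, $\sum_{v} d_{\mathbb{S}_2,\mathcal{H}}(v) = v(\mathbb{S}_2) \cdot \mathrm{N}(\mathbb{S}_2, \mathcal{H}) = 4\,\mathrm{N}(\mathbb{S}_2, \mathcal{H})$. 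This gives $\sum_{v} d_{2,\mathcal{H}}(v) = 8\,\mathrm{N}(\mathbb{S}_2, \mathcal{H}) + 9|\mathcal{H}|$. Rewriting $8\,\mathrm{N}(\mathbb{S}_2, \mathcal{H})$ via~\eqref{equ:2-norm-in-S2-edge-3gp} as $4\norm{\mathcal{H}}_2 - 12|\mathcal{H}|$ yields the claimed identity $\sum_v d_{2,\mathcal{H}}(v) = 4\norm{\mathcal{H}}_2 - 3|\mathcal{H}|$. The stated inequality then follows from $3|\mathcal{H}| \le 3\binom{n}{3} \le n^3$.

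For part~\eqref{LEMMA:CILLP24-35-37-b}, the approach is to remove the edges of $\mathcal{H}\setminus\mathcal{H}'$ one at a time and bound the cost of each removal by $6n$. Concretely, fix an arbitrary ordering $f_1,\ldots,f_k$ of the edges in $\mathcal{H}\setminus\mathcal{H}'$ (where $k = |\mathcal{H}\setminus\mathcal{H}'|$), set $\mathcal{H}_0 = \mathcal{H}$ and $\mathcal{H}_i = \mathcal{H}_{i-1}\setminus\{f_i\}$. When passing from $\mathcal{H}_{i-1}$ to $\mathcal{H}_i$ only the three pairs $e\subseteq f_i$ change their codegree, each by exactly $1$, so
\[
\norm{\mathcal{H}_{i-1}}_2 - \norm{\mathcal{H}_i}_2 = \sum_{e \subseteq f_i}\bigl(d_{\mathcal{H}_{i-1}}^2(e) - (d_{\mathcal{H}_{i-1}}(e)-1)^2\bigr) = \sum_{e \subseteq f_i}\bigl(2 d_{\mathcal{H}_{i-1}}(e) - 1\bigr) \le 3 \cdot 2(n-2) \le 6n,
\]
using the trivial bound $d_{\mathcal{H}_{i-1}}(e) \le n-2$. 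Summing the telescoping chain from $i=1$ to $k$ yields $\norm{\mathcal{H}}_2 - \norm{\mathcal{H}'}_2 \le 6n k = 6n \cdot |\mathcal{H}\setminus\mathcal{H}'|$, as desired.

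Neither part poses a real obstacle; everything reduces to the identities~\eqref{equ:sum-Q-degree}, \eqref{equ:2-norm-in-S2-edge-3gp}, and~\eqref{equ:defb-2norm-degree} already recorded in the preliminaries, together with the fact that deleting one $3$-edge decreases exactly three codegrees by~$1$. The only mild care needed is in the second part, to ensure that one estimates the cost edge by edge (so that the trivial codegree bound $n-2$ applies to the current hypergraph at each step) rather than trying to bound the total difference in one shot.
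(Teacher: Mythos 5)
Your proof is correct and essentially matches the paper's: part~\eqref{LEMMA:CILLP24-35-37-a} is identical (summing~\eqref{equ:defb-2norm-degree} over vertices and converting via~\eqref{equ:sum-Q-degree} and~\eqref{equ:2-norm-in-S2-edge-3gp}). For part~\eqref{LEMMA:CILLP24-35-37-b} the paper instead bounds $\mathrm{N}(\mathbb{S}_2,\mathcal{H})-\mathrm{N}(\mathbb{S}_2,\mathcal{H}')$ by $3(n-3)|\mathcal{H}\setminus\mathcal{H}'|$ and applies~\eqref{equ:2-norm-in-S2-edge-3gp}, while you telescope the codegree changes edge by edge; both are the same "each deleted edge costs at most $6n$" argument, and your direct computation is equally valid.
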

\begin{proof}[Proof of Lemma~\ref{LEMMA:CILLP24-35-37}]
    It follows from~\eqref{equ:defb-2norm-degree},~\eqref{equ:sum-Q-degree}, and~\eqref{equ:2-norm-in-S2-edge-3gp} that 
    \begin{align*}
        \sum_{v\in V(\mathcal{H})}d_{2,\mathcal{H}}(v)
        = \sum_{v\in V(\mathcal{H})} \big(2 d_{\mathbb{S}_{2}, \mathcal{H}}(v) + 3 d_{\mathcal{H}}(v)\big)
        = 8 \mathrm{N}(\mathbb{S}_{2}, \mathcal{H}) + 9 |\mathcal{H}|
        = 4 \norm{\mathcal{H}}_{2} - 3|\mathcal{H}|. 
    \end{align*}
    Consequently, $\left| \sum_{v\in V(\mathcal{H})}d_{2,\mathcal{H}}(v) - 4\norm{\mathcal{H}}_{2} \right| = 3|\mathcal{H}| \le 3\binom{n}{3} \le n^3$, which proves~\eqref{LEMMA:CILLP24-35-37-a}. 

    Fix a subgraph $\mathcal{H}' \subseteq \mathcal{H}$. Since every edge in $\mathcal{H}$ is contained in at most $3(n-3)$ copies of $\mathbb{S}_{2}$ in $\mathcal{H}$, we have 
    \begin{align*}
        \mathrm{N}(\mathbb{S}_{2}, \mathcal{H}) - \mathrm{N}(\mathbb{S}_{2}, \mathcal{H}')
        \le 3(n-3) |\mathcal{H} \setminus \mathcal{H}'|. 
    \end{align*}
    It follows from~\eqref{equ:2-norm-in-S2-edge-3gp} that 
    \begin{align*}
        \norm{\mathcal{H}}_{2} - \norm{\mathcal{H}'}_{2}
        & = 2 \big(\mathrm{N}(\mathbb{S}_{2}, \mathcal{H}) - \mathrm{N}(\mathbb{S}_{2}, \mathcal{H}') \big) +3 \big(|\mathcal{H}|  - |\mathcal{H}'|\big) \\
        & \le 6 (n-3) |\mathcal{H} \setminus \mathcal{H}'| + 3 |\mathcal{H} \setminus \mathcal{H}'|
        \le 6n |\mathcal{H} \setminus \mathcal{H}'|,
    \end{align*}
    which proves~\eqref{LEMMA:CILLP24-35-37-b}. 
\end{proof}

The next lemma shows that, in the $\ell_2$-norm, every extremal $\mathbb{F}$-free $3$-graph must be near-regular in terms of its $\ell_2$-norm degree.
For convenience, define the \textbf{average $\ell_2$-norm degree} of a $3$-graph $\mathcal{H}$ by 
\begin{align*}
    d_{\ell_{2}}(\mathcal{H})
    \coloneqq \frac{1}{v(\mathcal{H})} \sum_{v\in V(\mathcal{H})} d_{2,\mathcal{H}}(v). 
\end{align*}

\begin{lemma}\label{LEMMA:2norm-degree-regular}
    Suppose that $\mathcal{H}$ is an $\mathbb{F}$-free $n$-vertex $3$-graph with $\norm{\mathcal{H}}_{2} = \mathrm{ex}_{\ell_2}(n,\mathbb{F})$.
    Then for every pair of vertices $\{u,v\} \subseteq V(\mathcal{H})$, we have 
    \begin{align*}
        \left|d_{2,\mathcal{H}}(u) - d_{2,\mathcal{H}}(v)\right| 
        \le 60 n^2.
    \end{align*}
    In particular, for every vertex $v \in V(\mathcal{H})$, 
    \begin{align}\label{equ:LEMMA-near-regular-extremal-graph}
        \left|d_{2,\mathcal{H}}(v) - d_{\ell_{2}}(\mathcal{H})\right| 
        \le 60 n^2.
    \end{align}
\end{lemma}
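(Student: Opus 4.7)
My plan is to prove near-regularity via a vertex-cloning (symmetrization) argument that exploits the fact that the Fano plane $\mathbb{F}$ is a Steiner triple system, so every pair of its seven vertices lies in a common edge. Suppose for contradiction that some pair $\{u,v\} \subseteq V(\mathcal{H})$ satisfies $d_{2,\mathcal{H}}(u) - d_{2,\mathcal{H}}(v) > 60 n^2$. Define a new $n$-vertex $3$-graph $\mathcal{H}'$ by deleting $v$ from $\mathcal{H}$ and adding a fresh vertex $u^{\ast}$ whose link is $L_{\mathcal{H} - v}(u)$, so that $\mathcal{H}' - u^{\ast} = \mathcal{H} - v$ and, crucially, the pair $\{u, u^{\ast}\}$ is contained in no edge of $\mathcal{H}'$.

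The first step is to verify that $\mathcal{H}'$ is $\mathbb{F}$-free. Since every pair of vertices of $\mathbb{F}$ lies in exactly one edge, no copy of $\mathbb{F}$ in $\mathcal{H}'$ can contain both $u$ and $u^{\ast}$. A copy containing neither is already a copy in $\mathcal{H} - v \subseteq \mathcal{H}$; a copy containing $u$ but not $u^{\ast}$ uses only edges not through $u^{\ast}$, which lie in $\mathcal{H} - v \subseteq \mathcal{H}$; and a copy containing $u^{\ast}$ but not $u$ becomes a copy in $\mathcal{H}$ after relabeling $u^{\ast}$ as $u$ (since the other six vertices avoid $u$). Every case contradicts the $\mathbb{F}$-freeness of $\mathcal{H}$.

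The second and main step is a degree comparison. Since $\norm{\mathcal{H}'}_2 = \norm{\mathcal{H} - v}_2 + d_{2,\mathcal{H}'}(u^{\ast})$ and $\norm{\mathcal{H}}_2 = \norm{\mathcal{H} - v}_2 + d_{2,\mathcal{H}}(v)$, it suffices to prove $d_{2,\mathcal{H}'}(u^{\ast}) \ge d_{2,\mathcal{H}}(u) - 60 n^2$; then $\norm{\mathcal{H}'}_2 > \norm{\mathcal{H}}_2$, contradicting extremality. Using the expansion~\eqref{equ:def-2norm-degree}, one decomposes $d_{2,\mathcal{H}}(u) - d_{2,\mathcal{H}'}(u^{\ast})$ into the sum of three differences coming from the squared pair-codegrees through the cloned vertex, twice the codegrees of pairs on its link, and the bare degree. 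In each case the discrepancy is controlled by the codegree $d_{\mathcal{H}}(uv) \le n$: the pair-codegrees that change are precisely those through $v$ (at most $d_\mathcal{H}(uv)$ pairs, each of codegree at most $n$), while for $\{w,x\} \in L_{\mathcal{H} - v}(u)$ the codegree changes by $\pm 1$, with the $+1$ contributions from the new edges through $u^{\ast}$ dominating. A routine accounting yields $d_{2,\mathcal{H}}(u) - d_{2,\mathcal{H}'}(u^{\ast}) = O(n^2)$, which is well within $60 n^2$ for large enough $n$. Swapping the roles of $u$ and $v$ gives the full absolute-value inequality.

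The "in particular" clause~\eqref{equ:LEMMA-near-regular-extremal-graph} follows immediately by averaging: since $d_{\ell_2}(\mathcal{H}) = \frac{1}{n}\sum_{w \in V(\mathcal{H})} d_{2,\mathcal{H}}(w)$, we have
\[
    \left|d_{2,\mathcal{H}}(v) - d_{\ell_2}(\mathcal{H})\right|
    \le \frac{1}{n}\sum_{w \in V(\mathcal{H})} \left|d_{2,\mathcal{H}}(v) - d_{2,\mathcal{H}}(w)\right|
    \le 60 n^2.
\]
The main obstacle is the careful bookkeeping in the degree-comparison step, since one must simultaneously track how cloning perturbs (i) the squared codegrees $d_\mathcal{H}^2(uw)$ on the neighbourhood of $u^{\ast}$ and (ii) the codegrees $d_\mathcal{H}(wx)$ of pairs on the link of $u^{\ast}$; the favourable sign of the new edges through $u^{\ast}$, which raises those link codegrees by $1$, is what ultimately drives the argument in the right direction.
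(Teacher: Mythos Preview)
Your proposal is correct and follows essentially the same symmetrization argument as the paper: the construction $\mathcal{H}'$ is isomorphic to the paper's $\mathcal{G}$, and the $\mathbb{F}$-freeness argument is identical (the paper phrases it as ``$\partial\mathbb{F}$ is complete''). The only difference is bookkeeping---the paper works via the identity $d_{2,\mathcal{H}}(v) = 2d_{\mathbb{S}_2,\mathcal{H}}(v) + 3d_{\mathcal{H}}(v)$ and counts copies of $\mathbb{S}_2$, whereas you expand via~\eqref{equ:def-2norm-degree}; one caveat is that your phrase ``for large enough $n$'' should be dropped, since the lemma has no size hypothesis---your accounting actually gives a constant around $5n^2$, well under $60n^2$ for every $n$.
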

\begin{proof}[Proof of Lemma~\ref{LEMMA:2norm-degree-regular}]
    Let $\mathcal{H}$ be an $\mathbb{F}$-free $n$-vertex $3$-graph with $\norm{\mathcal{H}}_{2} = \mathrm{ex}_{\ell_2}(n,\mathbb{F})$.
    Suppose to the contrary that there exist two vertices $u, v \in V(\mathcal{H})$ with $d_{2,\mathcal{H}}(u) - d_{2,\mathcal{H}}(v) > 60 n^2$. 
    Let $x$ and $y$ denote the number of $\mathbb{S}_{2}$ in $\mathcal{H}$ containing $u$ and $v$, respectively, and let $z$ denote the number of $\mathbb{S}_{2}$ in $\mathcal{H}$ containing both $u$ and $v$.
    
    It is clear that $z \le 4! \binom{n-2}{2} \le 12 n^2$. By~\eqref{equ:defb-2norm-degree}, we have 
    \begin{align}\label{equ:x-y-lower-bound}
        x - y 
        & = \frac{d_{2,\mathcal{H}}(u) - 3d_{\mathcal{H}}(u)}{2} - \frac{d_{2,\mathcal{H}}(v) - 3d_{\mathcal{H}}(v)}{2} \notag \\
        & = \frac{d_{2,\mathcal{H}}(u) - d_{2,\mathcal{H}}(v)}{2} - \frac{3 \left(d_{\mathcal{H}}(u) - d_{\mathcal{H}}(v)\right)}{2}
        > \frac{60 n^2}{2} - \frac{3}{2} \binom{n}{2}
        \ge 29 n^2.  
    \end{align}
    Let $\mathcal{H}'$ denote the $3$-graph obtained from $\mathcal{H}$ by removing all edges containing both $u$ and $v$, and let 
    \begin{align*}
        \mathcal{G}
        \coloneqq \left(\mathcal{H}' \setminus \left\{E\in \mathcal{H}' \colon v\in E \right\}\right) \cup \left\{e \cup \{v\} \colon e\in L_{\mathcal{H}'}(u) \right\}. 
    \end{align*}
    Since $\partial \mathbb{F}$ is complete, the new $3$-graph $\mathcal{G}$ remains $\mathbb{F}$-free. 
    Moreover, 
    \begin{align}\label{equ:edge-change-G-H}
        |\mathcal{H}| - |\mathcal{G}|
        & = \big(d_{\mathcal{H}}(u) + d_{\mathcal{H}}(v) -  d_{\mathcal{H}}(uv) \big) - 2\big(d_{\mathcal{H}}(u) - d_{\mathcal{H}}(uv)\big) \notag \\
        & = d_{\mathcal{H}}(v) + d_{\mathcal{H}}(uv) - d_{\mathcal{H}}(u) 
        \le \binom{n-1}{2} + n-2
        \le n^2. 
    \end{align}
    On the other hand, by~\eqref{equ:x-y-lower-bound}, we have  
    \begin{align}\label{equ:S2-change-G-H}
        \mathrm{N}(\mathbb{S}_{2}, \mathcal{G}) - \mathrm{N}(\mathbb{S}_{2}, \mathcal{H})
        & = \mathrm{N}(\mathbb{S}_{2}, \mathcal{G}) - \mathrm{N}(\mathbb{S}_{2}, \mathcal{H}') - \left( \mathrm{N}(\mathbb{S}_{2}, \mathcal{H}) - \mathrm{N}(\mathbb{S}_{2}, \mathcal{H}') \right) \notag \\
        & \ge x-z-y -z
        =x-y-2z
        \ge 29 n^2 - 24 n^2 = 5n^2. 
    \end{align}
    Combining~\eqref{equ:edge-change-G-H},~\eqref{equ:S2-change-G-H}, and~\eqref{equ:2-norm-in-S2-edge-3gp}, we obtain 
    \begin{align*}
        \norm{\mathcal{G}}_{2} - \norm{\mathcal{H}}_{2}
        = 2 \big(\mathrm{N}(S_{2}^{r}, \mathcal{G}) -\mathrm{N}(S_{2}^{r}, \mathcal{H}) \big) - 3 \big(|\mathcal{H}| - |\mathcal{G}|\big) 
        \ge 10n^2 - 3n^2 
        > 0,
    \end{align*}
    which contradicts the maximality of $\mathcal{H}$. 
\end{proof}

Below, we list some basic properties of $\mathbb{F}$-free $3$-graphs. 
\begin{fact}\label{FACT:find-Fano}
    Let $\mathcal{H}$ be an $\mathbb{F}$-free $3$-graph. 
    The following statements hold.
    \begin{enumerate}[(i)]
        \item\label{FACT:Find-Fano-rainbow-K4} For every edge $uvw \in \mathcal{H}$, the $3$-multigraph $\big( L_{\mathcal{H}}(u),L_{\mathcal{H}}(v),L_{\mathcal{H}}(w) \big)$ is $\mathbb{K}_{4}$-free.
        \item\label{FACT:Find-Fano-K222} For every vertex $v \in V(\mathcal{H})$, if $u_1u_2, v_1v_2, w_1w_2$ are three pairwise disjoint edges in the link graph $L_{\mathcal{H}}(v)$, then at least one triple in the set $\left\{u_{i}v_{j}w_{k} \colon (i,j,k) \in [2]^3\right\}$ is not an edge of $\mathcal{H}$. 
    \end{enumerate}
\end{fact}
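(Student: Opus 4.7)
The plan for both parts is to argue by contrapositive: assuming the stated structural conclusion fails, I will construct an explicit copy of $\mathbb{F}$ inside $\mathcal{H}$ to derive a contradiction.

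For~\eqref{FACT:Find-Fano-rainbow-K4}, starting from a copy of $\mathbb{K}_{4}$ in $(L_{\mathcal{H}}(u), L_{\mathcal{H}}(v), L_{\mathcal{H}}(w))$ on vertices $\{a,b,c,d\}$, I would first note that $\{a,b,c,d\} \cap \{u,v,w\} = \emptyset$, since $L_{\mathcal{H}}(x)$ contains no pair incident to $x$. After relabelling $\{u,v,w\}$ so that the colour classes of the $\mathbb{K}_{4}$ correspond to the three perfect matchings of $K_{4}$ in the canonical way, I may assume $\{ab, cd\} \subseteq L_{\mathcal{H}}(u)$, $\{ac, bd\} \subseteq L_{\mathcal{H}}(v)$, and $\{ad, bc\} \subseteq L_{\mathcal{H}}(w)$, so that the six triples $uab, ucd, vac, vbd, wad, wbc$ all lie in $\mathcal{H}$. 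Adjoining the given edge $uvw$, I obtain a $7$-edge configuration on the $7$ vertices $\{u,v,w,a,b,c,d\}$, which I would then verify to be a Steiner triple system on $7$ points---i.e., a linear $3$-graph in which every vertex has degree $3$. Such a system is necessarily isomorphic to $\mathbb{F}$, contradicting $\mathbb{F}$-freeness.

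For~\eqref{FACT:Find-Fano-K222}, assuming all eight triples $\{u_i, v_j, w_k\}$ with $(i,j,k) \in [2]^{3}$ lie in $\mathcal{H}$, I would select the four triples with $i + j + k$ odd, namely
\begin{align*}
    u_1 v_1 w_1, \quad u_2 v_2 w_1, \quad u_1 v_2 w_2, \quad u_2 v_1 w_2,
\end{align*}
and combine them with the three link-edges $v u_1 u_2$, $v v_1 v_2$, $v w_1 w_2$ provided by the hypothesis. The explicit correspondence $v \leftrightarrow 7$ together with $(u_1, u_2, v_1, v_2, w_1, w_2) \leftrightarrow (1, 4, 2, 5, 3, 6)$ against the labelling of $\mathbb{F}$ in the excerpt identifies this $7$-edge configuration with the Fano plane on the vertex set $\{v, u_1, u_2, v_1, v_2, w_1, w_2\}$, again contradicting $\mathbb{F}$-freeness.

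The only substantive step in either argument is the verification that the chosen $7$ triples really do form a copy of $\mathbb{F}$, which reduces to checking that each of the $21$ vertex pairs is covered exactly once. Since this is purely mechanical bookkeeping against the explicit edge list of $\mathbb{F}$ given in the introduction, I do not foresee any real obstacle.
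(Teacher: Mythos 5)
Your proof is correct; note that the paper states this Fact without supplying a proof, and your argument is the standard one that underlies it (going back to De Caen--F\"{u}redi). Both verifications check out: in part (i) each vertex of the rainbow $K_4$ lies in an edge of each of the three link graphs, so $\{a,b,c,d\}\cap\{u,v,w\}=\emptyset$, and the seven triples cover all $21$ pairs exactly once, giving the unique Steiner triple system on $7$ points; in part (ii) the explicit correspondence $v\leftrightarrow 7$, $(u_1,u_2,v_1,v_2,w_1,w_2)\leftrightarrow(1,4,2,5,3,6)$ does match the paper's edge list for $\mathbb{F}$.
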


The following result by Balogh--Clemen--Lidick\'{y}~\cite{BCL22b} on the $\ell_2$-norm Tur\'{a}n problem for $K_{5}^{3}$ will also be crucial in the proof of Theorem~\ref{THM:AES-Fano-L2norm}.  
\begin{theorem}[{\cite[Theorem~1.6]{BCL22b}}]\label{THM:2norm-K53} 
    It holds that $\pi_{\ell_2}(K_5^3) = 5/16$.
    Moreover, for every $\varepsilon > 0$, there exist $\delta_{\ref{THM:2norm-K53}} \coloneqq \delta_{\ref{THM:2norm-K53}}(\varepsilon) > 0$ and $N_{\ref{THM:2norm-K53}} \coloneqq N_{\ref{THM:2norm-K53}}(\varepsilon)$ such that every $K_5^3$-free $3$-graph $\mathcal{H}$ on $n \ge N_{\ref{THM:2norm-K53}}$ vertices with $\norm{\mathcal{H}}_{2} \ge \left(5/16 - \delta_{\ref{THM:2norm-K53}} \right) n^4$ is bipartite after removing at most $\varepsilon n^3$ edges.
\end{theorem}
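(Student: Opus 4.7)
The plan is to split the argument into three components: the lower bound, the matching upper bound, and the stability claim.

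For the lower bound $\pi_{\ell_2}(K_{5}^{3}) \ge 5/16$, I would use the balanced complete bipartite $3$-graph $\mathbb{B}_n$ as the extremal construction. It is $K_{5}^{3}$-free since any five vertices contain, by pigeonhole, three within one part of the bipartition, forming a non-edge triple. A direct computation of its pair-degrees ($n/2$ for intra-part pairs and $n-2$ for cross-part pairs) yields $\norm{\mathbb{B}_n}_2 = (5/16 + o(1)) n^4$.

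For the matching upper bound, I would apply the identity $\norm{\mathcal{H}}_{2} = 2 \mathrm{N}(\mathbb{S}_{2}, \mathcal{H}) + 3 |\mathcal{H}|$ from~\eqref{equ:2-norm-in-S2-edge-3gp} to recast the problem as an upper bound on a positive linear combination of the cherry density and the edge density over $K_{5}^{3}$-free $3$-graphs. This is exactly the setting of Razborov's flag algebra calculus. The plan is to set up the Cauchy--Schwarz-type positivity constraints on flag products over small types (of order $3$ or $4$), solve the resulting semidefinite program numerically, and then round the dual solution to a certifiable rational certificate proving the bound $5/16$. One expects the flag algebra computation to pinpoint $\mathbb{B}_n$ as the unique extremal profile, matching the lower bound.

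For the stability statement, I would read off the positive slack coefficients in the same flag algebra certificate. These coefficients are attached precisely to the sub-configurations absent from $\mathbb{B}_n$, and therefore an almost-extremal $\mathcal{H}$ with $\norm{\mathcal{H}}_{2} \ge (5/16 - \delta) n^4$ must have each such sub-configuration of density $O(\delta)$. A standard removal-type argument together with a defect form of the hypergraph removal lemma would then show that $\mathcal{H}$ admits a bipartition after deleting at most $\varepsilon n^3$ edges, provided $\delta$ is sufficiently small with respect to $\varepsilon$.

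The principal obstacle is producing a rigorous, human-verifiable flag algebra certificate for the tight bound $5/16$: the SDP is sizable, the rational rounding must be carried out carefully, and the dual has to carry strictly positive slack on every sub-configuration forbidden by the bipartite structure (otherwise the stability conclusion does not follow from the perturbation argument). A secondary challenge is translating the resulting density defects, which are infinitesimal in the limit, into the quantitative edit-distance bound $\varepsilon n^3$ for finite $n \ge N_{\ref{THM:2norm-K53}}$.
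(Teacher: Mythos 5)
This statement is not proved in the paper at all: it is quoted verbatim as Theorem~1.6 of Balogh--Clemen--Lidick\'{y}~\cite{BCL22b} and used as a black box, so there is no internal proof to compare against. Your plan does correctly identify the method by which the cited result was obtained (a flag-algebra/SDP computation for the density bound, together with a stability analysis), and your lower-bound computation is correct: $\mathbb{B}_n$ is $K_5^3$-free by pigeonhole, and the codegrees $n/2$ on the roughly $n^2/4$ intra-part pairs and $n-2$ on the roughly $n^2/4$ cross pairs give $\norm{\mathbb{B}_n}_2 = (5/16+o(1))n^4$.

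However, as a proof the proposal has a genuine gap that you yourself flag: the upper bound $\pi_{\ell_2}(K_5^3)\le 5/16$ and the ensuing stability both rest entirely on an SDP certificate that is never produced. ``Set up the SDP, solve it numerically, round the dual'' is a description of a computation, not a proof; without the explicit positive semidefinite matrices and the verification that the slack is strictly positive on every configuration absent from $\mathbb{B}_n$, neither the density bound nor the stability conclusion is established. The stability step has a further soft spot: passing from ``each forbidden sub-configuration has density $O(\delta)$'' to ``$\mathcal{H}$ is bipartite after deleting $\varepsilon n^3$ edges'' is not automatic from a removal lemma alone --- one still needs an argument that near-zero density of those configurations forces a global bipartition (in~\cite{BCL22b} this requires a separate, carefully executed stability analysis on top of the certificate). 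If your intent was to use the result of~\cite{BCL22b} as the paper does, the correct move is simply to cite it; if your intent was to reprove it, the certificate and the bipartition-extraction argument are the entire content and are missing.
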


We will also use the following simple fact concerning the number of  $\mathbb{S}_{2}$ in a $3$-graph. 
\begin{fact}\label{FACT:number-of-S2}
    Let $\mathcal{H}$ be a $3$-graph on $n$ vertices. 
    Then the following statements hold:
    \begin{enumerate}[(i)]
        \item\label{FACT:number-of-S2-a} Every edge of $\mathcal{H}$ is contained in at most $\binom{3}{2} (n-3) \le 3n$ copies of $\mathbb{S}_{2}$ in $\mathcal{H}$.
        \item\label{FACT:number-of-S2-b} Every vertex of $\mathcal{H}$ is contained in at most $4! \binom{n-1}{3} \le 4n^3$ copies of $\mathbb{S}_{2}$ in $\mathcal{H}$.
        \item\label{FACT:number-of-S2-c} Every pair of vertices in $\mathcal{H}$ is contained in at most $4! \binom{n-2}{2} \le 12n^2$ copies of $\mathbb{S}_{2}$ in $\mathcal{H}$.
    \end{enumerate}
\end{fact}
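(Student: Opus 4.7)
The plan is to derive each of the three bounds by a direct counting argument that exploits the simple structure of $\mathbb{S}_{2}$: it has $4$ vertices and $2$ edges sharing a common pair, so every copy of $\mathbb{S}_{2}$ in a $3$-graph $\mathcal{H}$ is precisely an unordered pair of edges of $\mathcal{H}$ intersecting in exactly two vertices, and hence is supported on exactly $4$ vertices.

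For part~\eqref{FACT:number-of-S2-a}, I would fix an edge $E \in \mathcal{H}$ and enumerate the edges $E' \in \mathcal{H}$ for which $\{E, E'\}$ is a copy of $\mathbb{S}_{2}$. Such an $E'$ must share a $2$-subset with $E$, of which there are $\binom{3}{2} = 3$ choices, and then contain one additional vertex from $V(\mathcal{H}) \setminus E$, giving at most $n-3$ choices. Each copy of $\mathbb{S}_{2}$ containing $E$ is counted exactly once in this fashion, since $E$ appears as a single edge of the copy. This yields the bound $3(n-3) \le 3n$.

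For parts~\eqref{FACT:number-of-S2-b} and~\eqref{FACT:number-of-S2-c}, I would argue crudely via the $4$-vertex support. Any copy of $\mathbb{S}_{2}$ containing a fixed vertex $v$ (respectively a fixed pair $\{u,v\}$) is supported on a $4$-set of the form $\{v\} \cup T$ with $T \in \binom{V(\mathcal{H}) \setminus \{v\}}{3}$ (respectively $\{u,v\} \cup T$ with $T \in \binom{V(\mathcal{H}) \setminus \{u,v\}}{2}$). On each such $4$-set, the number of copies of $\mathbb{S}_{2}$ is at most the number of labeled embeddings of $\mathbb{S}_{2}$ into $4$ labeled vertices, which is at most $4! = 24$. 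Multiplying by $\binom{n-1}{3}$ or $\binom{n-2}{2}$ respectively, and then applying the trivial estimates $4!\binom{n-1}{3} = 4(n-1)(n-2)(n-3) \le 4n^3$ and $4!\binom{n-2}{2} = 12(n-2)(n-3) \le 12n^2$, yields the claimed inequalities.

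The argument is entirely elementary and there is no conceptual obstacle. The only minor subtlety is avoiding double-counting in part~\eqref{FACT:number-of-S2-a}, which is handled by the observation that a copy of $\mathbb{S}_{2}$ is uniquely determined by its two edges, so fixing $E$ and listing the possible partners $E'$ enumerates each copy containing $E$ exactly once.
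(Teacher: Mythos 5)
Your proof is correct, and since the paper states this Fact without proof, your direct counting (a copy of $\mathbb{S}_{2}$ is a pair of edges meeting in exactly two vertices, hence supported on four vertices) is exactly the elementary argument the paper implicitly relies on. All three bounds check out, including the crude per-$4$-set bound of $4!$ used for parts~(ii) and~(iii).
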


\subsection{Vertex-extendability}\label{SUBSEC:vtx-extend}
In this subsection, we present the framework developed in~\cite{CL24, CILLP24} for proving the Andr\'{a}sfai--Erd\H{o}s--S\'{o}s--type stability theorem for the $\ell_p$-norm Tur\'{a}n problem. 

A family $\mathfrak{H}$ of $r$-graphs is \textbf{hereditary} if, for every $\mathcal{H} \in \mathfrak{H}$, all subgraphs of $\mathcal{H}$ also belong to $\mathfrak{H}$. 

Let $p \ge 1$ be a real number. 
Let $\mathcal{F}$ be a family of $r$-graphs and $\mathfrak{H}$ be a hereditary family of $\mathcal{F}$-free $r$-graphs. 
\begin{enumerate}[(i)]
    \item We say that $\mathcal{F}$ is \textbf{$\ell_p$-edge-stable} with respect to $\mathfrak{H}$ if for every $\varepsilon > 0$, there exist $\delta > 0$ and $N_0$ such that every $\mathcal{F}$-free $r$-graph $\mathcal{H}$ on $n \ge N_0$ vertices with $\norm{\mathcal{H}}_{p} \ge (1-\delta) \cdot \mathrm{ex}_{\ell_p}(n,\mathcal{F})$ is contained in $\mathfrak{H}$ after removing at most $\varepsilon n^r$ edges. 
    \item We say that $\mathcal{F}$ is \textbf{$\ell_p$-degree-stable} with respect to $\mathfrak{H}$ if there exist $\delta>0$ and $N_0$ such that every $\mathcal{F}$-free $r$-graph $\mathcal{H}$ on $n \ge N_0$ vertices with $\delta_{\ell_p}(\mathcal{H}) \ge (1-\delta) \cdot \mathrm{exdeg}_{p}(n,\mathcal{F})$ is contained in $\mathfrak{H}$, where 
    \begin{align*}
        \mathrm{exdeg}_{p}(n,\mathcal{F}) 
        \coloneqq \frac{(r-1+p) \cdot \mathrm{ex}_{\ell_p}(n,\mathcal{F})}{n} 
        = (r-1+p+o_{n}(1)) \cdot \pi_{\ell_p}(\mathcal{F}) \cdot n^{r-2+p}.
    \end{align*}
    \item We say that $\mathcal{F}$ is \textbf{$\ell_p$-vertex-extendable} with respect to $\mathfrak{H}$ if there exist $\delta>0$ and $N_0$ such that the following holds for every $\mathcal{F}$-free $r$-graph $\mathcal{H}$ on $n \ge N_0$ vertices with $\delta_{\ell_p}(\mathcal{H}) \ge (1-\delta) \cdot \mathrm{exdeg}_{p}(n,\mathcal{F}) \colon$ 
    if $\mathcal{H}-v \in \mathfrak{H}$ for some $v \in V(\mathcal{H})$, then $\mathcal{H} \in \mathfrak{H}$. 
\end{enumerate}

The following theorem, which extends~{\cite[Theorem~1.1]{HLZ24}} on ordinary Tur\'{a}n problems, is a consequence of~{\cite[Theorem~1.7]{CL24}} (see also~{\cite[Theorem~4.10]{CL24}} and~{\cite[Theorem~3.10]{CILLP24}}). 
\begin{theorem}\label{THM:CL-Lp-general}
    Let $r \ge 2$ be an integer and $p \ge 1$ be a real number. Let $\mathcal{F}$ be a family of $r$-graphs satisfying $\pi(\mathcal{F}) > 0$ and let $\mathfrak{H}$ be a hereditary family of $\mathcal{F}$-free $r$-graphs. 
    Suppose that $\mathcal{F}$ is $\ell_p$-edge-stable and $\ell_p$-vertex-extendable with respect to $\mathfrak{H}$. 
    Then $\mathcal{F}$ is $\ell_p$-degree-stable with respect to $\mathfrak{H}$. 
\end{theorem}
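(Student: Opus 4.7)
\textbf{Proof proposal for Theorem~\ref{THM:CL-Lp-general}.}
The plan is to run the standard ``destroy-and-rebuild'' argument that reduces $\ell_{p}$-degree-stability to the combination of $\ell_{p}$-edge-stability and $\ell_{p}$-vertex-extendability, in the spirit of the proofs in~\cite{CL24,CILLP24,HLZ24}. Fix the constants $\delta_{\mathrm{ext}}, N_{\mathrm{ext}}$ provided by vertex-extendability, and let $\mathcal{H}$ be an $\mathcal{F}$-free $r$-graph on $n$ vertices with $\delta_{\ell_{p}}(\mathcal{H}) \ge (1-\delta) \cdot \mathrm{exdeg}_{p}(n,\mathcal{F})$ for a sufficiently small $\delta>0$ (to be chosen, depending on $\delta_{\mathrm{ext}}$ and on the edge-stability parameters).

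First, I would convert the degree hypothesis into an $\ell_{p}$-norm lower bound on $\mathcal{H}$. Using the identity $\sum_{v\in V(\mathcal{H})} d_{p,\mathcal{H}}(v) = (r-1+p) \cdot \norm{\mathcal{H}}_{p} + O(n^{r-1})$ (the analogue of Lemma~\ref{LEMMA:CILLP24-35-37}(i) in the general $(r,p)$ setting, obtained by expanding $\norm{\mathcal{H}}_p - \norm{\mathcal{H}-v}_p$ and summing), the minimum degree assumption yields
\begin{align*}
    \norm{\mathcal{H}}_{p} \ge (1-O(\delta)) \cdot \mathrm{ex}_{\ell_{p}}(n,\mathcal{F}).
\end{align*}
Given $\varepsilon>0$, taking $\delta$ small enough and applying $\ell_{p}$-edge-stability produces $\mathcal{H}^{\ast} \in \mathfrak{H}$ on the same vertex set with $|\mathcal{H} \triangle \mathcal{H}^{\ast}| \le \varepsilon n^{r}$.

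Next, I would isolate a set $Z \subseteq V(\mathcal{H})$ of ``bad'' vertices incident to unusually many edges of $\mathcal{H}\setminus\mathcal{H}^{\ast}$. By a Markov-type pigeonhole, one can ensure $|Z| \le \sqrt{\varepsilon}\, n$ while guaranteeing that every vertex of $V(\mathcal{H})\setminus Z$ is contained in at most $O(\sqrt{\varepsilon})\, n^{r-1}$ edges of the symmetric difference. Setting $\mathcal{H}_{0} \coloneqq \mathcal{H} - Z$, we have $\mathcal{H}_{0} \subseteq \mathcal{H}^{\ast}$ after a further tiny edge deletion (absorbed by taking a subgraph, which is permissible since $\mathfrak{H}$ is hereditary), so $\mathcal{H}_{0} \in \mathfrak{H}$.

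Finally, I would add the vertices of $Z$ back one at a time. Enumerate $Z = \{v_{1},\ldots, v_{t}\}$ and set $\mathcal{H}_{i} \coloneqq \mathcal{H}[(V(\mathcal{H})\setminus Z) \cup \{v_{1},\ldots,v_{i}\}]$. At each step $\mathcal{H}_{i}$ has $n_{i} = (n-t)+i \ge (1-\sqrt{\varepsilon})n$ vertices, is $\mathcal{F}$-free, and satisfies $\mathcal{H}_{i} - v_{i} = \mathcal{H}_{i-1} \in \mathfrak{H}$ by induction. Provided the minimum $\ell_{p}$-norm degree of $\mathcal{H}_{i}$, computed within its own vertex set, still exceeds $(1-\delta_{\mathrm{ext}}) \cdot \mathrm{exdeg}_{p}(n_{i}, \mathcal{F})$, the $\ell_{p}$-vertex-extendability hypothesis forces $\mathcal{H}_{i} \in \mathfrak{H}$, and iteration delivers $\mathcal{H} = \mathcal{H}_{t} \in \mathfrak{H}$.

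The main obstacle is verifying the degree hypothesis at each step of the rebuild. Removing $Z$ can decrease $d_{p,\mathcal{H}_{i}}(u)$ by at most $O(|Z| n^{r-1}) = O(\sqrt{\varepsilon}\, n^{r-1+p-1}\cdot n) = O(\sqrt{\varepsilon}\, n^{r-1+p})$ for any vertex $u$, and rescaling from the $n$-vertex threshold to the $n_{i}$-vertex threshold contributes an additional multiplicative correction of $1+O(\sqrt{\varepsilon})$. Choosing $\varepsilon \ll \delta_{\mathrm{ext}}^{2}$ and then $\delta \ll \varepsilon$ absorbs both losses, so the extendability threshold is preserved throughout, and the induction closes.
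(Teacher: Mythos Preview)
The paper does not give its own proof of Theorem~\ref{THM:CL-Lp-general}; it is quoted as a consequence of~\cite[Theorem~1.7]{CL24} (see also~\cite[Theorem~4.10]{CL24} and~\cite[Theorem~3.10]{CILLP24}), so there is no in-paper argument to compare against. Your outline follows the standard destroy-and-rebuild paradigm from those references, and the high-level plan is right.

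There is, however, a genuine gap at the base of your induction. You write that $\mathcal{H}_{0}\coloneqq \mathcal{H}-Z$ lies in $\mathfrak{H}$ ``after a further tiny edge deletion'', but you then set $\mathcal{H}_{i}\coloneqq \mathcal{H}[(V\setminus Z)\cup\{v_{1},\dots,v_{i}\}]$, i.e.\ induced subgraphs of $\mathcal{H}$ with \emph{no} edge deletion. These two choices are incompatible: if $\mathcal{H}_{0}$ is the edge-deleted graph then $\mathcal{H}_{1}-v_{1}=\mathcal{H}[V\setminus Z]\neq \mathcal{H}_{0}$, so the vertex-extendability step does not apply; if instead $\mathcal{H}_{0}=\mathcal{H}[V\setminus Z]$, you have not shown $\mathcal{H}_{0}\in\mathfrak{H}$. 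Either way the rebuild terminates in a proper subgraph $\mathcal{H}_{t}\subsetneq\mathcal{H}$ (missing exactly the edges of $(\mathcal{H}-Z)\setminus\mathcal{H}^{\ast}$), and hereditariness goes the wrong direction to conclude $\mathcal{H}\in\mathfrak{H}$. The standard fix in~\cite{CL24,CILLP24,HLZ24} is an additional pass: starting from $\mathcal{G}\coloneqq\mathcal{H}\cap\mathcal{H}^{\ast}\in\mathfrak{H}$, one processes every vertex $w$ in turn, replacing the edges through $w$ by the $\mathcal{H}$-edges through $w$; each intermediate graph $\mathcal{K}$ satisfies $\mathcal{G}\subseteq\mathcal{K}\subseteq\mathcal{H}$, is $\mathcal{F}$-free, and $\mathcal{K}-w$ is a subgraph of the previous stage so lies in $\mathfrak{H}$ by heredity. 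Controlling $\delta_{\ell_{p}}(\mathcal{K})$ along this sequence requires the low \emph{per-vertex} bad degree you arranged (and for $p>1$ this control is more delicate than you indicate, since $\ell_{p}$-degree is not monotone under edge addition). Two smaller points: the error in $\sum_{v}d_{p,\mathcal{H}}(v)=(r-1+p)\lVert\mathcal{H}\rVert_{p}+\cdots$ is $O(n^{r-2+p})$, not $O(n^{r-1})$ (harmless here); and the hypothesis $\pi(\mathcal{F})>0$ is what guarantees $\mathrm{exdeg}_{p}(n_{i},\mathcal{F})$ scales correctly with $n_{i}$ so the rescaling step goes through.
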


\section{Tur\'{a}n density and edge-stability of $\mathbb{F}$ in $\ell_2$-norm}\label{SEC:density-edge-stability}
In this section, we prove the following results. 

\begin{theorem}\label{THM:2-norm-density}
    We have $\pi_{\ell_2}(\mathbb{F}) = 5/16$. 
\end{theorem}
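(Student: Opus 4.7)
For the lower bound, a direct computation shows that $\mathbb{B}_n$ is $\mathbb{F}$-free with $\|\mathbb{B}_n\|_2 = 2\binom{n/2}{2}(n/2)^2 + (n/2)^2(n-2)^2 = \bigl(\tfrac{5}{16} + o(1)\bigr) n^4$, using that same-part pairs have codegree $n/2$ and cross pairs have codegree $n-2$.

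For the upper bound, fix an $\mathbb{F}$-free $3$-graph $\mathcal{H}$ on $n$ vertices. By~\eqref{equ:2-norm-in-S2-edge-3gp} together with the edge bound $|\mathcal{H}| \le \bigl(\tfrac{1}{8} + o(1)\bigr) n^3$ coming from $\pi(\mathbb{F}) = 3/4$, the task reduces to showing $\mathrm{N}(\mathbb{S}_2, \mathcal{H}) \le \bigl(\tfrac{5}{32} + o(1)\bigr) n^4$. Via the identity $2 \mathrm{N}(\mathbb{S}_2, \mathcal{H}) = \sum_v \mathrm{N}(S_2, L_v)$, the problem is reduced further to bounding $\sum_v \mathrm{N}(S_2, L_v)$, where each $L_v$ is a graph on $n-1$ vertices with $|L_v| = x_v n^2$ edges and $x_v \coloneqq d_\mathcal{H}(v)/n^2$.

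The strategy combines three tools. First, Ahlswede--Katona (Theorem~\ref{THM:max-S2-density}) bounds $\mathrm{N}(S_2, L_v)$ in terms of $x_v$ alone. Second, Proposition~\ref{PROP:graph-max-S2} strengthens this bound when $L_v$ contains an independent set of appropriate size whose vertices all have large degree---the configuration realized by the link of a vertex in $\mathbb{B}_n$, whose half-sized independent part $V_1 \setminus \{v\}$ witnesses $\mathrm{N}(S_2, L_v) \to \tfrac{5}{16} n^3$ rather than the looser Ahlswede--Katona value $\tfrac{3\sqrt{3}}{16} n^3$ at $x_v = 3/8$. Third, the $\mathbb{K}_4$-free link multigraph structure (Fact~\ref{FACT:find-Fano}), combined with the Bellmann--Reiher bound~\eqref{equ:Bellmann-Reiher-5-multigraph-upper-bound}, supplies the structural input that makes Proposition~\ref{PROP:graph-max-S2} applicable and simultaneously constrains the codegree sequence across $\mathcal{H}$. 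The plan is to partition vertices according to where $x_v$ lies and bound each class separately: small-$x_v$ vertices contribute negligibly since $g(x_v)$ is small; vertices with $x_v$ in the critical range receive the refined bound from Proposition~\ref{PROP:graph-max-S2} after one locates a suitable independent set inside $L_v$; and vertices with $x_v$ close to $1/2$ have almost complete links, so that $\mathbb{F}$-freeness, via Fact~\ref{FACT:find-Fano}, forbids certain substructures in $\mathcal{H} - v$ and forces $d_\mathcal{H}(v)$ of this size to be rare.

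The main obstacle is that Proposition~\ref{PROP:graph-max-S2} is established only for $(x, \alpha)$ in a narrow window, whereas the extremal link inside $\mathbb{B}_n$ has $(x, \alpha) = (3/8, 1/2)$ lying outside this window. The resolution is to choose a smaller independent subset in each critical link and verify the degree hypothesis internally, using the multigraph $\mathrm{ex}_5(n,\mathbb{K}_4)$-bound to certify that such a substructure is forced in any near-extremal $\mathbb{F}$-free $\mathcal{H}$; this is precisely the point where the multigraph estimate and the star-counting estimate must be dovetailed. Summing the resulting per-vertex bounds then yields $\sum_v \mathrm{N}(S_2, L_v) \le \bigl(\tfrac{5}{16} + o(1)\bigr) n^4$, completing the proof.
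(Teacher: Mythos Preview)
Your proposal is more of a strategic outline than a proof, and it diverges substantially from the paper's actual argument. The paper proceeds via a clean dichotomy: take an extremal $\mathcal{H}$, use near-regularity (Lemma~\ref{LEMMA:2norm-degree-regular}) so that every vertex has $\ell_2$-degree at least $(5/4-\varepsilon)n^3$, and split into two cases. If $\mathcal{H}$ is $K_5^3$-free, the Balogh--Clemen--Lidick\'y result (Theorem~\ref{THM:2norm-K53}) finishes immediately since $\pi_{\ell_2}(K_5^3)=5/16$. If $\mathcal{H}$ contains a $K_5^3$, then Lemma~\ref{LEMMA:main-lemma-sum-important} (whose proof is where Proposition~\ref{PROP:graph-max-S2} and the multigraph results are actually deployed) forces one of those five vertices to have $\ell_2$-degree below $(5/4-\varepsilon)n^3$, contradicting near-regularity. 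You never invoke Theorem~\ref{THM:2norm-K53}, and that omission is what leaves the gap below.

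Your plan tries to bypass the $K_5^3$ dichotomy and bound $\sum_v \mathrm{N}(S_2,L_v)$ vertex by vertex. The concrete failure point is the ``critical range'' case: Proposition~\ref{PROP:graph-max-S2} requires, inside $L_v$, an independent set of prescribed size in which every vertex has large degree. Your only proposed source for such a set is the $\mathbb{K}_4$-free multigraph structure of Fact~\ref{FACT:find-Fano}, but that structure is available only for tuples of links $(L_{v_1},\ldots,L_{v_5})$ with $\{v_1,\ldots,v_5\}$ spanning a $K_5^3$ (or at least a common edge); it says nothing about a single link $L_v$ in isolation. The paper uses the multigraph machinery and Proposition~\ref{PROP:graph-max-S2} only inside Lemma~\ref{LEMMA:main-lemma-sum-important}, and only for the five links of the $K_5^3$ vertices, precisely because that is where the $\mathbb{K}_4$-freeness hypothesis is available. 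Without a mechanism to verify the hypotheses of Proposition~\ref{PROP:graph-max-S2} for a generic vertex, your per-vertex bound in the critical range is unestablished, and the summation cannot be pushed down to $5/16$.
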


Let $\mathfrak{B}$ denote the family of all bipartite $3$-graphs. Observe that $\mathfrak{B}$ is hereditary. 
\begin{theorem}\label{THM:2-edge-stability}
    The $3$-graph $\mathbb{F}$ is $\ell_2$-edge-stable with respect to $\mathfrak{B}$. 
\end{theorem}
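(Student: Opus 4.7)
The strategy combines the refined Ahlswede--Katona bound (Proposition~\ref{PROP:graph-max-S2}) with the multigraph AES-type stability theorem (Theorem~\ref{THM:AES-5-multigraph}) to force a global bipartition. Given an $\mathbb{F}$-free 3-graph $\mathcal{H}$ on $n$ vertices with $\norm{\mathcal{H}}_{2}\ge(1-\delta)\,\mathrm{ex}_{\ell_2}(n,\mathbb{F})$, I first pass to a near-regular subgraph by iteratively deleting any vertex $v$ satisfying $d_{2,\mathcal{H}}(v)<(5/4-\eta)n^3$, for a suitable $\eta=\eta(\delta)\to 0$. By Lemma~\ref{LEMMA:CILLP24-35-37}(ii), each such deletion costs in $\ell_2$-norm only $6n$ times the number of deleted edges, so this cleanup removes $o_\delta(1)\,n^3$ edges in total and yields $\mathcal{H}^*$ on $n^*=(1-o_\delta(1))n$ vertices with $\delta_{\ell_2}(\mathcal{H}^*)\ge(5/4-2\eta)(n^*)^3$ and $\norm{\mathcal{H}^*}_{2}\ge\norm{\mathcal{H}}_{2}-o_\delta(1)\,n^4$.

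Next, I analyze the links of $\mathcal{H}^*$. From identity~\eqref{equ:def-2norm-degree} and the double-counting identity $\sum_{v}\norm{L_{\mathcal{H}^*}(v)}_{2}=2\norm{\mathcal{H}^*}_{2}$, a typical link $L_{\mathcal{H}^*}(v)$ satisfies $\norm{L_{\mathcal{H}^*}(v)}_{2}\approx(5/8)(n^*)^3$ and $|L_{\mathcal{H}^*}(v)|\approx(3/8)(n^*)^2$. Theorem~\ref{THM:max-S2-density} permits two asymptotic extremal possibilities: either $L_{\mathcal{H}^*}(v)$ is close to the bipartite-type graph $S(n^*-1,(n^*-1)/2)$, or to the clique-type graph $C(n^*-1,\sqrt{3/4}\,(n^*-1))$. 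To rule out the latter I use Fact~\ref{FACT:find-Fano}(ii) together with the high minimum $\ell_2$-norm degree to identify a sizeable independent set in $L_{\mathcal{H}^*}(v)$ whose vertices have large degree in $L_{\mathcal{H}^*}(v)$, then invoke Proposition~\ref{PROP:graph-max-S2} (in whichever parameter regime applies) to force $L_{\mathcal{H}^*}(v)$ to be $o_\delta(1)$-close in edit distance to $S(n^*-1,(n^*-1)/2)$. This yields a local bipartition $V(L_{\mathcal{H}^*}(v))=A_v\cup B_v$ with $A_v$ (essentially) independent and $B_v$ (essentially) a clique in $L_{\mathcal{H}^*}(v)$, with nearly all $A_v$--$B_v$ edges present.

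Finally, I globalize these local bipartitions using Fact~\ref{FACT:find-Fano}(i) and Theorem~\ref{THM:AES-5-multigraph}. For any pair of incident edges $uvw,uvw'\in\mathcal{H}^*$ sharing the pair $\{u,v\}$, amalgamating the two associated $\mathbb{K}_4$-free three-link multigraphs produces a $\mathbb{K}_4$-free 5-multigraph whose minimum degree exceeds $58n^*/17$ (by Step~1), so Theorem~\ref{THM:AES-5-multigraph}(ii) delivers a good partition of its vertex set. Combining such good partitions across a spanning collection of edge pairs and reconciling them with the local bipartitions $\{A_v,B_v\}$ via a majority-based argument produces a single global partition $V(\mathcal{H}^*)=V_1\cup V_2$ across which all but $o_\delta(1)\,n^3$ edges of $\mathcal{H}^*$ cross. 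Removing these non-crossing edges together with the edges lost in Step~1 yields a bipartite 3-graph, and choosing $\delta$ sufficiently small in terms of $\varepsilon$ bounds the total removal by $\varepsilon n^3$.

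The main obstacle is this globalization step: propagating the local bipartitions $\{A_v,B_v\}$ into a single consistent global bipartition via the multigraph stability theorem. Careful bookkeeping is required to ensure that the exceptional vertex set (where the global and local partitions disagree) has size $o_\delta(1)\,n$, and that the many overlapping good partitions arising from different edge pairs are mutually compatible; the design of the 5-multigraph amalgamation (obtained by pairing incident edges) is precisely what allows Theorem~\ref{THM:AES-5-multigraph}(ii) to bridge the ``local'' $\mathbb{K}_4$-free constraints from Fact~\ref{FACT:find-Fano}(i) and the ``local'' bipartitions from Proposition~\ref{PROP:graph-max-S2}.
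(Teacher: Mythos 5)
Your proposal diverges fundamentally from the paper's proof and contains gaps that I do not see how to repair. The paper's argument is short and structurally different: it removes the few vertices of small $\mathbb{S}_2$-degree (Claim~\ref{CLAIM:Z-delta-upper-bound}), observes that the remaining $3$-graph has $\delta_{\ell_2} \ge (5/4 - \varepsilon_{\ref{LEMMA:main-lemma-sum-important}})n^3$ and hence is $K_5^3$-free by Lemma~\ref{LEMMA:main-lemma-sum-important}, and then invokes the flag-algebra-based edge-stability theorem for $K_5^3$ in the $\ell_2$-norm (Theorem~\ref{THM:2norm-K53}) to conclude bipartiteness after removing few edges. Your plan never uses Theorem~\ref{THM:2norm-K53} and instead tries to derive a global bipartition directly from link analysis; this amounts to reproving a $K_5^3$-type stability statement from scratch, and the steps you outline do not accomplish it.

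Two concrete failures. First, your link dichotomy does not follow from Theorem~\ref{THM:max-S2-density}. A typical link has $x = |L_{\mathcal{H}^*}(v)|/n^2 \approx 3/8$ and $\norm{L_{\mathcal{H}^*}(v)}_2 \approx \tfrac{5}{8}n^3$, but at $x = 3/8$ the Ahlswede--Katona maximum is $2\sqrt{2}\,(3/8)^{3/2}n^3 \approx 0.6495\,n^3$, strictly larger than $\tfrac58 n^3 = 0.625\,n^3$. So the link is nowhere near AK-extremal, and no structural conclusion (closeness to $S(\cdot,\cdot)$ or $C(\cdot,\cdot)$) can be extracted; the paper provides only upper bounds (Theorem~\ref{THM:max-S2-density}, Proposition~\ref{PROP:graph-max-S2}), not a stability version, and Proposition~\ref{PROP:graph-max-S2} can only be applied after one has \emph{already} exhibited a large independent set of large-degree vertices in the link, which the paper obtains from the $5$-multigraph of a $K_5^3$ (Claim~\ref{CLAIM:indepedent-set-mindeg}), not from Fact~\ref{FACT:find-Fano}~\eqref{FACT:Find-Fano-K222}. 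Second, your amalgamation step is invalid: Fact~\ref{FACT:find-Fano}~\eqref{FACT:Find-Fano-rainbow-K4} gives $\mathbb{K}_4$-freeness of $(L(u),L(v),L(w))$ only when $uvw \in \mathcal{H}$, so a $\mathbb{K}_4$-free $5$-multigraph of links requires all $\binom{5}{3}$ triples to be edges, i.e.\ a copy of $K_5^3$ (this is exactly the hypothesis of Lemma~\ref{LEMMA:main-lemma-sum-important}). Two incident edges $uvw, uvw'$ yield only four distinct links, and the triple $\{u,w,w'\}$ need not be an edge, so the resulting multigraph need not be $\mathbb{K}_4$-free and Theorem~\ref{THM:AES-5-multigraph} does not apply. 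The final "majority-based" globalization is also only asserted, not argued. To fix the proof you should follow the paper's route: after the cleanup step, use Lemma~\ref{LEMMA:main-lemma-sum-important} to rule out $K_5^3$ and then apply Theorem~\ref{THM:2norm-K53}.
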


Let us very briefly explain the main idea behind the proof of Theorem~\ref{THM:2-norm-density} (the same idea also applies to the proof of Theorem~\ref{THM:2-edge-stability}).
Let $\mathcal{H}$ be an extremal $\mathbb{F}$-free $3$-graph on $n$ vertices that maximizes the $\ell_2$-norm. 
By Theorem~\ref{THM:2norm-K53}, we are done if $\mathcal{H}$ is $K_5^3$-free.
Thus, we may assume that $\mathcal{H}$ contains five vertices $v_1, \ldots, v_5$ that induce a copy of $K_5^3$ in $\mathcal{H}$. 
Without loss of generality, assume that $v_1$ has the smallest (vertex) degree among these five vertices, and denote this minimum degree by $d_{\min}$.

Using results (i.e.~\eqref{equ:Bellmann-Reiher-5-multigraph-upper-bound}) on the $5$-multigraph Tur\'{a}n problem introduced by Bellmann--Reiher~\cite{BR19}, we first show in Lemma~\ref{LEMMA:K53-deg-sum} that $d_{\min} \le (61/176 + o_{n}(1))n^2$, which is substantially smaller than the expected bound $\left( 3/8 - o_{n}(1) \right)n^2$ in the extremal construction $\mathbb{B}_{n}$. 

Next, by analyzing the $\ell_2$-norm degree, we derive a lower bound for $d_{\min}$, contradicting the upper bound above.
Since $\mathcal{H}$ is extremal, Lemma~\ref{LEMMA:2norm-degree-regular} implies that $\mathcal{H}$ is nearly $\ell_2$-norm degree regular.
In particular, we obtain a lower bound of $d_{2,\mathcal{H}}(v_1) \ge (5/4 - o_{n}(1))n^3$.
By the inequality~\eqref{equ:raw-upper-bound-2norm-degree}, this lower bound heuristically suggests a corresponding lower bound on $|L_{\mathcal{H}}(v_1)|$, i.e., on $d_{\mathcal{H}}(v_1)$.
Lemma~\ref{LEMMA:main-lemma-sum-important} confirms this intuition.
However, to obtain a lower bound exceeding $\left( \frac{61}{176} + o_{n}(1) \right)n^2$, the classical theorem of Ahlswede--Katona (Theorem~\ref{THM:max-S2-density}) is insufficient.
That is the reason we need the refined bound given in Proposition~\ref{PROP:graph-max-S2}.
To apply this refinement, we also require the stability version of the $5$-multigraph Tur\'{a}n problem, as established in Theorem~\ref{THM:AES-5-multigraph}.

\subsection{Technical lemmas}
In this subsection, we establish several technical lemmas needed for the proofs of Theorems~\ref{THM:2-norm-density} and~\ref{THM:2-edge-stability}. 

The following lemma provides a lower bound on the degree of a vertex $v$ in a $3$-graph in terms of its $\ell_2$-norm degree. 
\begin{lemma}\label{LEMMA:warmup-bound-degree}
    Let $\mathcal{H}$ be an $n$-vertex $3$-graph and $v \in V(\mathcal{H})$ be a vertex. 
    Let 
    \begin{align*}
        \alpha 
        \coloneqq \frac{d_{2,\mathcal{H}}(v)}{n^3}
        \quad\text{and}\quad 
        x
        \coloneqq \frac{d_{\mathcal{H}}(v)}{n^2}. 
    \end{align*}
    Then $x \ge f^{-1}(\alpha) - o_{n}(1)$,
    where $f \colon [0, 1/2] \to \mathbb{R}$ is the increasing function defined by 
    \begin{align*}
        f(y) \coloneqq \max\left\{(1-2y)^{3/2}+6y-1,~(2y)^{3/2}+2y\right\}.
    \end{align*}
\end{lemma}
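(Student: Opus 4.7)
The plan is to chain together the crude upper bound~\eqref{equ:raw-upper-bound-2norm-degree} on $d_{2,\mathcal{H}}(v)$ with the Ahlswede--Katona bound (Theorem~\ref{THM:max-S2-density}) applied to the link graph $L_{\mathcal{H}}(v)$, and then to invert the resulting estimate. Since $d_{\mathcal{H}}(v) = |L_{\mathcal{H}}(v)| \le \binom{n-1}{2}$, we always have $x \le 1/2 + o_n(1)$, so the relevant domain for $f$ is indeed $[0,1/2]$.

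First I would apply~\eqref{equ:raw-upper-bound-2norm-degree} to obtain
\[
    d_{2,\mathcal{H}}(v) \le \norm{L_{\mathcal{H}}(v)}_{2} + 2\,|L_{\mathcal{H}}(v)|\,n.
\]
The link $L_{\mathcal{H}}(v)$ is a graph on at most $n-1$ vertices (which we regard as an $n$-vertex graph after adding isolated vertices; this only affects lower-order terms) with exactly $xn^2$ edges. Applying the $\ell_2$-norm form of Theorem~\ref{THM:max-S2-density} to $L_{\mathcal{H}}(v)$ yields
\[
    \frac{\norm{L_{\mathcal{H}}(v)}_{2}}{n^3} \le \max\left\{ (1-2x)^{3/2}+4x-1,\; 2\sqrt{2}\,x^{3/2} \right\} + o_{n}(1).
\]
Dividing the previous display by $n^3$, noting that $2|L_{\mathcal{H}}(v)| n / n^3 = 2x$, and using $2\sqrt{2}\,x^{3/2} = (2x)^{3/2}$, we obtain
\[
    \alpha \le \max\left\{ (1-2x)^{3/2}+6x-1,\; (2x)^{3/2}+2x \right\} + o_{n}(1) = f(x) + o_{n}(1).
\]

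Next I would verify that $f$ is strictly increasing on $[0,1/2]$, which is essentially a one-line calculus check: the derivative of $(1-2y)^{3/2}+6y-1$ equals $6 - 3(1-2y)^{1/2} \ge 3$ on $[0,1/2]$, and the derivative of $(2y)^{3/2}+2y$ equals $3\sqrt{2y}+2 \ge 2$, so both constituent functions are strictly increasing; hence so is their pointwise maximum. Since $f(0)=0$ and $f$ is continuous, $f^{-1}$ is well-defined on the range $f([0,1/2])$, and the bound $\alpha \le f(x)+o_n(1)$ rearranges to $x \ge f^{-1}(\alpha) - o_n(1)$, which is the desired conclusion.

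There is no genuine obstacle in this argument; it is a direct packaging of the crude link-based inequality together with Ahlswede--Katona. The only mild care-points are (i) justifying that the $o_n(1)$ error from Ahlswede--Katona transfers cleanly to an $o_n(1)$ error in the inverted inequality, which follows from the uniform positive lower bound on the derivative of $f$ shown above, and (ii) confirming that $x$ always lies in $[0,1/2]$ so that $f^{-1}$ can be applied.
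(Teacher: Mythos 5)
Your proposal is correct and follows exactly the paper's argument: combine the crude bound~\eqref{equ:raw-upper-bound-2norm-degree} with Theorem~\ref{THM:max-S2-density} applied to the link graph, then invert using the monotonicity of $f$. The extra details you supply (the derivative check that $f$ is strictly increasing, and the remark that $x\le 1/2+o_n(1)$) are sound and only make explicit what the paper leaves implicit.
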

\begin{proof}[Proof of Lemma~\ref{LEMMA:warmup-bound-degree}]
    It follows from~\eqref{equ:raw-upper-bound-2norm-degree} and Theorem~\ref{THM:max-S2-density} that 
    \begin{align*}
        \alpha 
        = \frac{d_{2,\mathcal{H}}(v)}{n^3}
        & \le \frac{\norm{L_{\mathcal{H}}(v)}_{2} + 2 n |L_{\mathcal{H}}(v)|}{n^3} \\
        & \le \max\left\{(1-2x)^{3/2}+4x-1,~(2x)^{3/2}\right\} + 2x + o_{n}(1), 
    \end{align*}
    which implies that $x \ge f^{-1}(\alpha) - o_{n}(1)$. 
\end{proof}

The following lemma shows that a dense $\mathbb{K}_{4}$-free $5$-multigraph contains a large induced subgraph with large minimum degree. 
\begin{lemma}\label{LEMMA:large-mindeg-multigraph}
    Suppose that $\vec{\mathcal{G}}=(G_1,\ldots,G_5)$ is an $n$-vertex $\mathbb{K}_{4}$-free $5$-multigraph with $|\vec{\mathcal{G}}| \ge \beta\binom{n+1}{2}$, where $\beta \in [0, 7/2]$ is a real number. Then there exists a set $U \subseteq V(\vec{\mathcal{G}})$ such that   
    \begin{align*}
        |U|
        \ge \left(\frac{4|\vec{\mathcal{G}}|-2\beta n(n+1)}{7-2\beta}\right)^{1/2} 
    \end{align*}
    and the induced subgraph $\vec{\mathcal{G}}[U] = (G_1[U], \ldots, G_{5}[U])$ satisfies $\delta(\vec{\mathcal{G}}[U]) \ge \beta |U|$. 
\end{lemma}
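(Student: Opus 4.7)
The plan is a standard iterative low-degree extraction argument combined with the Bellmann--Reiher bound \eqref{equ:Bellmann-Reiher-5-multigraph-upper-bound} on $\mathrm{ex}_{5}(n,\mathbb{K}_{4})$. Starting with $U_{0} := V(\vec{\mathcal{G}})$, iteratively define $U_{i+1}$ from $U_{i}$ as follows: if $\delta(\vec{\mathcal{G}}[U_{i}]) \ge \beta |U_{i}|$, terminate and set $U := U_{i}$; otherwise, pick some $v_{i} \in U_{i}$ with $d_{\vec{\mathcal{G}}[U_{i}]}(v_{i}) < \beta |U_{i}|$ and let $U_{i+1} := U_{i} \setminus \{v_{i}\}$. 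The process clearly terminates; write $k := |U|$. Since induced sub-multigraphs of $\mathbb{K}_{4}$-free multigraphs remain $\mathbb{K}_{4}$-free, \eqref{equ:Bellmann-Reiher-5-multigraph-upper-bound} gives $|\vec{\mathcal{G}}[U]| \le (7k^{2}-k)/4$.

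Next I would account for the edges lost during the extraction. At the $i$-th removal step the set $U_{i}$ has size $n-i$, so the step destroys strictly fewer than $\beta(n-i)$ edges; each edge of $\vec{\mathcal{G}}$ not lying in $\vec{\mathcal{G}}[U]$ is destroyed at exactly one such step. Summing over $i = 0, 1, \ldots, n-k-1$ gives
\[
    |\vec{\mathcal{G}}| - |\vec{\mathcal{G}}[U]|
    \;<\; \sum_{i=0}^{n-k-1} \beta(n-i)
    \;=\; \beta \sum_{j=k+1}^{n} j
    \;=\; \frac{\beta \bigl( n(n+1) - k(k+1) \bigr)}{2}.
\]
Combining this with the Bellmann--Reiher upper bound yields
\[
    |\vec{\mathcal{G}}|
    \;<\; \frac{7k^{2}-k}{4} + \frac{\beta\bigl(n(n+1) - k(k+1)\bigr)}{2},
\]
and rearranging gives
\[
    4|\vec{\mathcal{G}}| - 2\beta n(n+1)
    \;<\; (7-2\beta)k^{2} - (1+2\beta)k
    \;\le\; (7-2\beta)k^{2},
\]
where the last inequality uses $\beta \le 7/2$ (so the leading coefficient is non-negative) and $k \ge 0$. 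Since $|\vec{\mathcal{G}}| \ge \beta\binom{n+1}{2}$ ensures the left-hand side is non-negative, dividing by $7-2\beta$ (or handling the boundary case $\beta = 7/2$ trivially) and taking square roots yields $k \ge \bigl((4|\vec{\mathcal{G}}| - 2\beta n(n+1))/(7-2\beta)\bigr)^{1/2}$, as desired.

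There is no real conceptual obstacle; the main points requiring care are the bookkeeping in the telescoping sum $\sum_{j=k+1}^{n} j = \binom{n+1}{2} - \binom{k+1}{2}$, the use of the correct inequality $\mathrm{ex}_{5}(k,\mathbb{K}_{4}) \le (7k^{2}-k)/4$ from \eqref{equ:Bellmann-Reiher-5-multigraph-upper-bound} (valid for all relevant $k$), and the degenerate cases $\beta = 7/2$ or $k \le 3$, which give a trivial right-hand side and can be dismissed directly.
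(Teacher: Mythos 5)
Your proposal is correct and uses essentially the same argument as the paper: iteratively delete a vertex of degree below $\beta$ times the current order, bound the edges destroyed by the telescoping sum $\beta\sum_{j=k+1}^{n}j$, and bound the surviving edges via the Bellmann--Reiher inequality \eqref{equ:Bellmann-Reiher-5-multigraph-upper-bound}. The paper phrases this as a proof by contradiction with the deletion stopped at size exactly $m$, whereas you run the process to termination and solve for $k$, but the bookkeeping and the key inputs are identical.
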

\begin{proof}[Proof of Lemma~\ref{LEMMA:large-mindeg-multigraph}]
    Let $m \coloneqq \left(\frac{4|\vec{\mathcal{G}}|-2\beta n(n+1)}{7-2\beta}\right)^{1/2}$. 
    Suppose to the contrary that this lemma fails. 
    Then there exist a partition $S \cup U = V(\vec{\mathcal{G}})$ with $|U| = m$, and an ordering $(v_1, \ldots, v_{n-m})$ of vertices in $S$ (obtained by repeatedly deleting vertices of minimum degree), such that for every $i \in [n-m]$, the vertex $v_i$ has less than $\beta (n + 1 - i)$ neighbors in the set $\{v_{i+1}, \ldots, v_{n-m}\} \cup U$.
    Therefore, the number of edges in $\vec{\mathcal{G}}$ that contain at least one vertex from $S$ is less than 
    \begin{align*}
        \sum_{i=1}^{n-m} \beta (n+1-i) 
        = \frac{\beta}{2} \left(n^2 - m^2 + n-m\right)
        \le \beta \binom{n+1}{2} - \frac{\beta m^2}{2}. 
    \end{align*}
    On the other hand, it follows from the Bellmann--Reiher bound~\eqref{equ:Bellmann-Reiher-5-multigraph-upper-bound} that $|\vec{\mathcal{G}}[U]| \le 7m^2/4$. 
    Therefore, we have 
    \begin{align*}
        |\vec{\mathcal{G}}|
        < \beta \binom{n+1}{2} - \frac{\beta m^2}{2} + \frac{7 m^2}{4} 
        & = \beta \binom{n+1}{2} + \frac{(7 - 2\beta) m^2}{4} \\
        & = \beta \binom{n+1}{2}+ \frac{(7 - 2\beta)}{4} \cdot \frac{4|\vec{\mathcal{G}}|-2\beta n(n+1)}{7-2\beta}
        = |\vec{\mathcal{G}}|,
    \end{align*}
    a contradiction. 
\end{proof}

Note that the extremal construction $\mathbb{B}_{n}$ has minimum degree $(3/8 -o_{n}(1))n^2$. 
The next lemma shows that if an $\mathbb{F}$-free $3$-graph contains a copy of $K_{5}^{3}$, then it must have a vertex whose degree is significantly smaller than $(3/8 - o_{n}(1))n^2$. 
\begin{lemma}\label{LEMMA:K53-deg-sum}
    Let $\mathcal{H}$ be an $\mathbb{F}$-free $3$-graph on $n$ vertices. 
    Suppose that $S \subseteq V(\mathcal{H})$ is a set of five vertices that induces a copy of $K_{5}^{3}$ in $\mathcal{H}$. 
    Then 
    \begin{align*}
        \sum_{v\in S} d_{\mathcal{H}}(v) + \frac{3}{17} d_{\mathrm{min}} 
        \le \frac{61 n(n+1)}{34},
    \end{align*}
    where $d_{\mathrm{min}}\coloneqq \min_{v\in S} d_{\mathcal{H}}(v)$. 
    Consequently,   
    \begin{align*}
        d_{\mathrm{min}} 
        \le \frac{61 n(n+1)}{176}.  
    \end{align*} 
\end{lemma}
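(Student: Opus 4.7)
My plan is to encode the combined $\mathbb{F}$-freeness of $\mathcal{H}$ and the $K_5^3$-structure on $S$ as $\mathbb{K}_4$-freeness of a $5$-multigraph, and then invoke the Bellmann--Reiher bound~\eqref{equ:Bellmann-Reiher-5-multigraph-upper-bound}, refined by extracting information from the minimum-degree vertex.

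Write $S = \{v_1, \ldots, v_5\}$ and, without loss of generality, $d_{\mathcal{H}}(v_1) = d_{\min}$. Define the $5$-multigraph $\vec{\mathcal{G}} = (G_1, \ldots, G_5)$ on $V(\mathcal{H})$ by
\[
    G_i \coloneqq \bigl\{ \{x,y\} \subseteq V(\mathcal{H}) \setminus \{v_i\} \colon \{v_i, x, y\} \in \mathcal{H} \bigr\},
\]
so that $|G_i| = d_{\mathcal{H}}(v_i)$ and $|\vec{\mathcal{G}}| = \sum_{v\in S} d_{\mathcal{H}}(v)$. The core structural observation is that $\vec{\mathcal{G}}$ must be $\mathbb{K}_4$-free: a hypothetical copy of $\mathbb{K}_4$ with color indices $\{i,j,k\} \subseteq [5]$ and vertex set $\{x_1, x_2, x_3, x_4\}$ supplies six edges of $\mathcal{H}$ passing through $v_i, v_j, v_k$; since the definition of $G_\ell$ forces $v_\ell \notin \{x_1, \ldots, x_4\}$, the seven vertices $\{v_i, v_j, v_k, x_1, x_2, x_3, x_4\}$ are pairwise distinct, and together with $\{v_i, v_j, v_k\} \in \mathcal{H}$ (available from the $K_5^3$-structure on $S$) these seven edges form a Fano plane, contradicting the hypothesis.

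Applying~\eqref{equ:Bellmann-Reiher-5-multigraph-upper-bound} directly yields $\sum_{v \in S} d_{\mathcal{H}}(v) \le (7n^2 - n)/4$, which on its own already gives $d_{\min} \le (7n^2-n)/20$---close to but just shy of the target. To close the gap, I would apply~\eqref{equ:Bellmann-Reiher-5-multigraph-upper-bound} a second time to the (still $\mathbb{K}_4$-free) induced sub-multigraph $\vec{\mathcal{G}}[V(\mathcal{H}) \setminus \{v_1\}]$ on $n-1$ vertices, obtaining a bound on $|\vec{\mathcal{G}}| - \deg_{\vec{\mathcal{G}}}(v_1)$. The multigraph degree $\deg_{\vec{\mathcal{G}}}(v_1) = 12 + a_1$---where $a_1$ counts edges of the form $\{v_1, v_i, x\}$ with $v_i \in S \setminus \{v_1\}$ and $x \in V(\mathcal{H}) \setminus S$---can be related to $d_{\min}$ via the decomposition $d_{\min} = 6 + a_1 + g_1$, where $g_1$ counts the edges through $v_1$ with both remaining vertices in $V(\mathcal{H}) \setminus S$. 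Combining the two Bellmann--Reiher inequalities via a linear combination engineered to produce coefficient $3/17$ on $d_{\min}$ then yields the target inequality $\sum_{v\in S} d_{\mathcal{H}}(v) + \frac{3}{17} d_{\min} \le \frac{61 n(n+1)}{34}$. The consequence $d_{\min} \le \frac{61 n(n+1)}{176}$ is immediate from substituting $\sum_{v\in S} d_{\mathcal{H}}(v) \ge 5 d_{\min}$ to conclude $\frac{88}{17} d_{\min} \le \frac{61 n(n+1)}{34}$.

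The principal obstacle is obtaining the exact coefficients $\frac{3}{17}$ and $\frac{61}{34}$, since a naive use of Bellmann--Reiher on $\vec{\mathcal{G}}$ alone falls just short; the refinement through the removal of $v_1$ (together with the careful bookkeeping of internal edges through $S$) is therefore indispensable. If additional sharpening is required in the regime where $|\vec{\mathcal{G}}|$ is close to the Bellmann--Reiher extremum, one may further invoke Lemma~\ref{LEMMA:large-mindeg-multigraph} to extract a sub-multigraph of high minimum multigraph degree and then apply Theorem~\ref{THM:AES-5-multigraph} to force a nice or good partition structure, which can be contradicted by the presence of $K_5^3$ on $S$.
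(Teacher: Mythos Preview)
Your main line of attack---combining the Bellmann--Reiher bound~\eqref{equ:Bellmann-Reiher-5-multigraph-upper-bound} on $\vec{\mathcal{G}}$ with the same bound applied to $\vec{\mathcal{G}}[V(\mathcal{H})\setminus\{v_1\}]$---cannot produce the target inequality, regardless of the linear combination chosen. The second application yields $|\vec{\mathcal{G}}|\le \frac{7(n-1)^2-(n-1)}{4}+\deg_{\vec{\mathcal{G}}}(v_1)$, and your bookkeeping gives $\deg_{\vec{\mathcal{G}}}(v_1)=12+a_1$ with $a_1=d_{\min}-6-g_1\le d_{\min}$. So at best this shows $|\vec{\mathcal{G}}|-d_{\min}\lesssim \tfrac{7n^2}{4}$, which bounds $|\vec{\mathcal{G}}|-d_{\min}$ from above, not $|\vec{\mathcal{G}}|+\tfrac{3}{17}d_{\min}$. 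More to the point, Bellmann--Reiher alone gives only $|\vec{\mathcal{G}}|+\tfrac{3}{17}d_{\min}\le \tfrac{7n^2-n}{4}+\tfrac{3}{17}\binom{n}{2}\approx \tfrac{125}{68}n^2$, which exceeds the target $\tfrac{61}{34}n(n+1)\approx\tfrac{122}{68}n^2$. No amount of deleting single vertices and reapplying~\eqref{equ:Bellmann-Reiher-5-multigraph-upper-bound} closes this gap, because deletion only controls $|\vec{\mathcal{G}}|$ minus a degree term, never $|\vec{\mathcal{G}}|$ plus one.

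What you flag as a possible ``additional sharpening'' is in fact the entire argument, and the mechanism is not the one you suggest. The paper assumes the inequality fails, so $|\vec{\mathcal{G}}|>\tfrac{58}{17}\binom{n+1}{2}$, then uses Lemma~\ref{LEMMA:large-mindeg-multigraph} with $\beta=58/17$ (this is where $3/17=\tfrac12(7-2\beta)^{-1}\cdot\text{const}$ and $61/34$ come from) to extract $U$ with $\delta(\vec{\mathcal{G}}[U])\ge \tfrac{58}{17}|U|$ and $|U|^2\ge 2n(n+1)-4d_{\min}$. Theorem~\ref{THM:AES-5-multigraph}~\eqref{THM:AES-5-multigraph-b} then gives a good partition of $U$, forcing some $G_{i_0}$ to be bipartite on $U$; hence $|G_{i_0}|\le \binom{n}{2}-\binom{|U|}{2}+\tfrac{|U|^2}{4}\le d_{\min}-\tfrac{n}{2}$, contradicting $|G_{i_0}|\ge d_{\min}$. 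The contradiction is not with the $K_5^3$-structure on $S$ (which was used only to ensure $\mathbb{K}_4$-freeness via Fact~\ref{FACT:find-Fano}~\eqref{FACT:Find-Fano-rainbow-K4}); it is that the good partition forces one link graph to be too sparse.
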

\begin{proof}[Proof of Lemma~\ref{LEMMA:K53-deg-sum}]
    Suppose to the contrary that this lemma fails for some $5$-set $S=\{v_1,\ldots, v_5\} \subseteq V(\mathcal{H})$. 
     For each $i \in [5]$, let $G_i \coloneqq L_{\mathcal{H}}(v_i)$, and denote $5$-multigraph $\vec{\mathcal{G}} \coloneqq (G_1, \ldots, G_5)$. 
    Let $d_{\mathrm{min}} \coloneqq \min\{|G_1|, \ldots, |G_5|\}$. 
    By our assumption, and using the trivial upper bound $d_{\mathrm{min}} \le \binom{n}{2} \le \binom{n+1}{2}$, we have 
    \begin{align*}
        |\vec{\mathcal{G}}|
        = \sum_{v\in S} d_{\mathcal{H}}(v) 
        & > \frac{61 n(n+1)}{34}-\frac{3}{17} d_{\mathrm{min}}  \\
        & \ge \frac{61 n(n+1)}{34}-\frac{3}{17} \binom{n+1}{2}
        = \frac{58}{17}\binom{n+1}{2}. 
    \end{align*}
    Since $\mathcal{H}$ is $\mathbb{F}$-free, by Fact~\ref{FACT:find-Fano}~\eqref{FACT:Find-Fano-rainbow-K4}, $\vec{\mathcal{G}}$ is $\mathbb{K}_{4}$-free. 
    Applying Lemma~\ref{LEMMA:large-mindeg-multigraph} to $\vec{\mathcal{G}}$ with $\beta \coloneqq 58/17$, we obtain a subset $U \subseteq V(\mathcal{H})$ such that  
    \begin{align}\label{equ:LEMMA-U-lower-bound}
        |U|
        \ge \left(\frac{4|\vec{\mathcal{G}}|-2\beta n(n+1)}{7-2\beta}\right)^{1/2}
        & > \left(\frac{4\left(\frac{61 n(n+1)}{34}-\frac{3}{17} d_{\mathrm{min}}\right)-2 \cdot \frac{58}{17} n(n+1)}{7-2\cdot \frac{58}{17}}\right)^{1/2} \notag \\
        & 
        =  \left(2n(n+1) -4d_{\mathrm{min}} \right)^{1/2} 
    \end{align}
    and $\delta(\vec{\mathcal{G}}[U]) \ge \beta |U| = 58|U|/17$. 

    Applying Theorem~\ref{THM:AES-5-multigraph}~\eqref{THM:AES-5-multigraph-b} to the induced subgraph $\vec{\mathcal{G}}[U]$, we obtain a partition $X \cup Y = U$ such that, up to a permutation of the indices, $G_{5}[X] = G_{5}[Y] = \emptyset$. Consequently, $|G_{5}[U]| \le |X||Y| \le |U|^2/4$. 
    Combining it with~\eqref{equ:LEMMA-U-lower-bound}, we obtain  
    \begin{align*}
        |G_5|
        =|G_{5}\setminus G_{5}[U]| + |G_{5}[U]| 
        & \le \binom{n}{2} - \binom{|U|}{2} + \frac{|U|^{2}}{4} \\
        & \le \frac{n^2}{2} - \frac{|U|^{2}}{4} 
        \le \frac{n^2}{2} - \frac{2n(n+1)-4d_{\mathrm{min}}}{4}
        = d_{\mathrm{min}} - \frac{n}{2},
    \end{align*}
    contradicting the definition of $d_{\mathrm{min}}$. 
    Therefore, we have 
    \begin{align*}
        |G_1|+ \cdots + |G_5| 
        = |\vec{\mathcal{G}}| 
        \le \frac{61 n(n+1)}{34}-\frac{3}{17} d_{\mathrm{min}}, 
    \end{align*}
    and it follows that 
    \begin{align*}
        d_{\mathrm{min}} 
        \le \frac{61 n(n+1)}{34} \left(5+\frac{3}{17}\right)^{-1}
        = \frac{61 n(n+1)}{176}. 
    \end{align*}
    This completes the proof of Lemma~\ref{LEMMA:K53-deg-sum}. 
\end{proof}

The next lemma shows that if an $\mathbb{F}$-free $3$-graph contains a copy of $K_{5}^{3}$, then it must have a vertex whose $\ell_{2}$-norm degree is bounded away from the minimum $\ell_{2}$-norm degree of the extremal construction $\mathbb{B}_{n}$. 
\begin{lemma}\label{LEMMA:main-lemma-sum-important}
    There exist constants $\varepsilon_{\ref{LEMMA:main-lemma-sum-important}} > 0$ and $N_{\ref{LEMMA:main-lemma-sum-important}}$ such that the following holds for every $n \ge N_{\ref{LEMMA:main-lemma-sum-important}}$.
    Suppose that $\mathcal{H}$ is an $n$-vertex $3$-graph that contains a $5$-set $S = \{v_1, \ldots, v_{5}\} \subseteq V(\mathcal{H})$ which induces a copy of $K_{5}^{3}$ in $\mathcal{H}$, and suppose that $d_{2,\mathcal{H}}(v_i) \ge (5/4 - \varepsilon_{\ref{LEMMA:main-lemma-sum-important}})n^3$ for $i \in [5]$.
    Then $\mathbb{F} \subseteq \mathcal{H}$. 
\end{lemma}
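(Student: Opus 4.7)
Assume for contradiction that $\mathcal{H}$ is $\mathbb{F}$-free. The plan is to show that the resulting configuration is over-constrained by deriving incompatible upper and lower bounds on the degree of the minimum-degree vertex in $S$. Let $v_\ast \in S$ realise $d_{\min} \coloneqq \min_{v \in S} d_{\mathcal{H}}(v)$. Lemma~\ref{LEMMA:K53-deg-sum} gives $d_{\mathcal{H}}(v_\ast) \le (61/176 + o(1)) n^2$, while Lemma~\ref{LEMMA:warmup-bound-degree} applied to each $v_i \in S$ under the hypothesis $d_{2,\mathcal{H}}(v_i) \ge (5/4 - \varepsilon) n^3$ gives $d_{\mathcal{H}}(v_i) \ge (f^{-1}(5/4) - o(1)) n^2$; a direct numerical check shows $f^{-1}(5/4)$ is strictly but narrowly less than $61/176$, so this crude lower bound (which rests on the Ahlswede--Katona inequality, Theorem~\ref{THM:max-S2-density}) falls just short of contradicting Lemma~\ref{LEMMA:K53-deg-sum}. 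The key idea is therefore to sharpen the lower bound by replacing Theorem~\ref{THM:max-S2-density} with the refinement in Proposition~\ref{PROP:graph-max-S2}, whose parameter window $x \in [17/50, 7/20]$ comfortably contains the range in which $d_{\mathcal{H}}(v_\ast)/n^2$ is pinned.

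To feed Proposition~\ref{PROP:graph-max-S2} an independent set inside $L_{\mathcal{H}}(v_\ast)$ with suitably large vertex degrees, I exploit the stability of the $\mathbb{K}_4$-free $5$-multigraph problem. By Fact~\ref{FACT:find-Fano}~\eqref{FACT:Find-Fano-rainbow-K4} the $5$-multigraph $\vec{\mathcal{G}} \coloneqq (L_{\mathcal{H}}(v_1), \ldots, L_{\mathcal{H}}(v_5))$ is $\mathbb{K}_4$-free, while the degree lower bounds above give $|\vec{\mathcal{G}}| = \sum_{i=1}^{5} d_{\mathcal{H}}(v_i) \ge (5 f^{-1}(5/4) - o(1)) n^2 > (58/17)\binom{n+1}{2}$. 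Lemma~\ref{LEMMA:large-mindeg-multigraph} with $\beta = 58/17$ produces a set $U \subseteq V(\mathcal{H})$ with $\delta(\vec{\mathcal{G}}[U]) \ge 58|U|/17$, and Theorem~\ref{THM:AES-5-multigraph}~\eqref{THM:AES-5-multigraph-b} then yields a good partition $X \cup Y = U$. The good partition has a distinguished index $i^{\ast} \in [5]$ with $G_{i^{\ast}}[X] = G_{i^{\ast}}[Y] = \emptyset$; the arithmetic used in the proof of Lemma~\ref{LEMMA:K53-deg-sum} shows $|G_{i^{\ast}}| \le \binom{n}{2} - \binom{|U|}{2} + |U|^2/4 \le d_{\min} + O(n)$, so $v_{i^{\ast}} = v_\ast$ up to lower-order terms. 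Feeding the independent set $X$ (or $Y$) into Proposition~\ref{PROP:graph-max-S2}, with degree information inherited from $\delta(\vec{\mathcal{G}}[U]) \ge 58|U|/17$, and plugging the resulting upper bound on $\|L_{\mathcal{H}}(v_\ast)\|_2$ into~\eqref{equ:raw-upper-bound-2norm-degree} together with the hypothesis $d_{2,\mathcal{H}}(v_\ast) \ge (5/4-\varepsilon) n^3$, forces $d_{\mathcal{H}}(v_\ast) > (61/176 + c) n^2$ for an absolute constant $c > 0$, contradicting Lemma~\ref{LEMMA:K53-deg-sum}.

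The main obstacle is matching the parameters produced by the stability step (the sizes of $X, Y$ and the induced degrees inside $L_{\mathcal{H}}(v_\ast)$) with the narrow windows of Proposition~\ref{PROP:graph-max-S2}, which allows either case~(i) with $\alpha \in [17/100, 23/100]$ and degrees $\ge \alpha n$, or case~(ii) with $\alpha \in [1/3, 2/5]$ and degrees $\ge n/5$. The numerical margin in the key inequality $\alpha^3 + (2x - \alpha^2)\sqrt{2x + \alpha^2} + 2x < 5/4$ is only of order $10^{-3}$ near the extremal configuration $(x, \alpha) = (61/176, 2/5)$, so $\varepsilon_{\ref{LEMMA:main-lemma-sum-important}}$ must be chosen accordingly small. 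I expect to navigate this by a short case split on $|U|$: when $|U|$ is close to its lower bound from Lemma~\ref{LEMMA:large-mindeg-multigraph}, all the $d_{\mathcal{H}}(v_i)$ are forced close to $f^{-1}(5/4)$, so case~(i) of Proposition~\ref{PROP:graph-max-S2} applies with $\alpha$ in the smaller window; when $|U|$ is larger, $|X|$ and $|Y|$ comfortably reach the case-(ii) window, making the stronger $\alpha = 2/5$ bound available. In either regime a direct computation pushes the refined lower bound on $d_{\mathcal{H}}(v_\ast)$ past $61/176$ by a positive constant, completing the contradiction.
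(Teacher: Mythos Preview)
Your overall strategy matches the paper's, but two steps misfire. First, the claim $|G_{i^\ast}| \le d_{\min} + O(n)$ is not justified: the arithmetic you borrow from the proof of Lemma~\ref{LEMMA:K53-deg-sum} was run there under the \emph{negation} of that lemma, which supplied the extra hypothesis $\sum_i |G_i| > \tfrac{61}{34}n(n+1) - \tfrac{3}{17} d_{\min}$ needed to force $|U|^2 > 2n(n+1)-4d_{\min}$; with only $\sum_i |G_i| \ge 5d_{\min}$ you cannot recover that bound. The identification is in any case unnecessary, since every $G_i$ inherits an independent set from the partition and one can work directly with the minimum-size link. More substantively, you invoke Lemma~\ref{LEMMA:large-mindeg-multigraph} with $\beta = 58/17$ and the good partition, whereas the paper uses $\beta = 44/13$ and the \emph{nice} partition (Theorem~\ref{THM:AES-5-multigraph}~\eqref{THM:AES-5-multigraph-a}): the weaker threshold yields a strictly larger $|U|$, and this margin is essential for hitting the parameter windows of Proposition~\ref{PROP:graph-max-S2}. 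With $\beta = 58/17$ and only the initial bound $d_{\min} \ge f^{-1}(5/4)\,n^2 \approx 0.342\,n^2$, one gets $|U| \approx 0.30\,n$, so neither part of $U$ reaches the $[1/3,2/5]$ window and the smaller part falls below $0.17n$ as well.

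Second, the endgame is not a single case split, and the target is not $d_{\min} > 61/176$. The paper bootstraps: a first application of Proposition~\ref{PROP:graph-max-S2}~\eqref{PROP:graph-max-S2-07} lifts $d_{\min}/n^2$ from $\approx 0.342$ to $61/177$; this enlarges $|U|$ enough that Proposition~\ref{PROP:graph-max-S2}~\eqref{PROP:graph-max-S2-022-033} then lifts $d_{\min}/n^2$ to $253/730$, still just \emph{below} $61/176$. The contradiction comes from a further step exploiting the asymmetry of the nice partition: three of the links (say $G_3,G_4,G_5$) share the \emph{larger} part $X$ as independent set, with $|X|\ge |U|/2 > 2n/5$, and a separate application of Proposition~\ref{PROP:graph-max-S2}~\eqref{PROP:graph-max-S2-022-033} gives $\min\{|G_3|,|G_4|,|G_5|\}/n^2 \ge 321/926$. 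Only then does $2\cdot\tfrac{253}{730}+3\cdot\tfrac{321}{926}+\tfrac{3}{17}\cdot\tfrac{253}{730} > \tfrac{61}{34}$ contradict the \emph{full} inequality $\sum_i |G_i| + \tfrac{3}{17} d_{\min} \le \tfrac{61}{34} n(n+1)$ of Lemma~\ref{LEMMA:K53-deg-sum}.
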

\begin{proof}[Proof of Lemma~\ref{LEMMA:main-lemma-sum-important}]
    Let $\varepsilon_{\ref{LEMMA:main-lemma-sum-important}} > 0$ be sufficiently small, and $N_{\ref{LEMMA:main-lemma-sum-important}}$ be sufficiently large. 
    Let $\mathcal{H}$ be a $3$-graph on $n \ge N_{\ref{LEMMA:main-lemma-sum-important}}$ vertices satisfying the assumptions of the lemma. 
    Suppose that $S = \{v_1, \ldots, v_{5}\} \subseteq V(\mathcal{H})$ is a set of five vertices that induces a copy of $K_{5}^{3}$ in $\mathcal{H}$, and suppose that $d_{2,\mathcal{H}}(v_i) \ge (5/4 - \varepsilon_{\ref{LEMMA:main-lemma-sum-important}})n^3$ for $i \in [5]$.
    
    Let $G_i \coloneqq L_{\mathcal{H}}(v_i)$ for $i \in [5]$, and let $\vec{\mathcal{G}}=(G_1, \ldots, G_5)$. 
    Let $d_{\mathrm{min}} \coloneqq \min\{|G_1|, \ldots, |G_{5}|\}$. 
    Suppose to the contrary that $\mathcal{H}$ is $\mathbb{F}$-free. By Fact~\ref{FACT:find-Fano}~\eqref{FACT:Find-Fano-rainbow-K4}, the $5$-multigraph $\vec{\mathcal{G}}$ is $\mathbb{K}_{4}$-free. 

    For every $i \in [5]$, since $d_{2,\mathcal{H}}(v_i) \ge (5/4 - \varepsilon_{\ref{LEMMA:main-lemma-sum-important}})n^3$, it follows from Lemma~\ref{LEMMA:warmup-bound-degree} and the continuity of the function $f$ that 
    \begin{align}\label{equ:first-bound-dv-0342067}
        \frac{d_{\mathcal{H}}(v_i)}{n^2}
        \ge f^{-1}\left(\frac{5}{4} - \varepsilon_{\ref{LEMMA:main-lemma-sum-important}}\right) - o_{n}(1) 
        = 0.342067... - \varepsilon_1 - o_{n}(1)
        > \frac{235}{687},  
    \end{align}
    where $\varepsilon_1$ is some constant depending on $\varepsilon_{\ref{LEMMA:main-lemma-sum-important}}$, and $\varepsilon_1 \to 0$ as $\varepsilon_{\ref{LEMMA:main-lemma-sum-important}} \to 0$.

    \begin{claim}\label{CLAIM:indepedent-set-mindeg}
        For each $i \in [5]$, there exists an independent set $I_i$ in $G_i$ such that $d_{G_i}(v) \ge \alpha_1 n$ for every $v\in I_i$ and $|I_{i}| \ge \alpha_2 n$, where 
        \begin{align*}
            \alpha_1 
             \coloneqq \frac{4}{13 n}\left(\frac{260}{3}d_{\mathrm{min}}-\frac{88}{3}n(n+1)\right)^{1/2} \quad\text{and}\quad 
            \alpha_2 
             \coloneqq \frac{6}{13 n}\left(\frac{260}{3}d_{\mathrm{min}}-\frac{88}{3}n(n+1)\right)^{1/2}. 
        \end{align*}
    \end{claim}
    \begin{proof}[Proof of Claim~\ref{CLAIM:indepedent-set-mindeg}]
        It follows from~\eqref{equ:first-bound-dv-0342067} that $d_{\mathrm{min}} \ge \frac{235}{687}n^2$, and since $n$ is large, we have 
        \begin{align*}
            |\vec{\mathcal{G}}|
            = |G_1|+ \cdots + |G_5| 
            \ge 5 d_{\mathrm{min}} 
            \ge 5\cdot \frac{235}{687}n^2 
            > \frac{44}{13} \binom{n+1}{2}. 
        \end{align*}
        By Lemma~\ref{LEMMA:large-mindeg-multigraph}, there exists a subset $U \subseteq V(\vec{\mathcal{G}})$ such that 
        \begin{align}\label{equ:U-lower-bound}
            |U|
            \ge \left(\frac{4|\vec{\mathcal{G}}|- 2\cdot \frac{44}{13} n(n+1)}{7-2\cdot \frac{44}{13}}\right)^{1/2} 
            & \ge \left(\frac{20d_{\mathrm{min}}-\frac{88}{13} n(n+1)}{3/13} \right)^{1/2}  \notag \\
            & = \left( \frac{260}{3}d_{\mathrm{min}}-\frac{88}{3}n(n+1)\right)^{1/2}
        \end{align}
        and 
        \begin{align}\label{equ:minimum-degree-U}
            \delta(\vec{\mathcal{G}}[U])
            \ge 44 |U|/13. 
        \end{align}
        Applying Theorem~\ref{THM:AES-5-multigraph}~\eqref{THM:AES-5-multigraph-a} to the induced subgraph $\vec{\mathcal{G}}[U] = (G_1[U], \ldots, G_{5}[U])$, we obtain a partition $X \cup Y = U$ with $|Y| \le |X|$ such that, up to a permutation of the indices, 
        \begin{align}\label{equ:Lemma-nice-partition}
            G_{1}[Y] = G_{2}[Y] = G_{3}[X] = G_{4}[X] = G_{5}[X] = \emptyset, 
        \end{align}
        and every pair of vertices in $Y$ has multiplicity at most $2$ in $\vec{\mathcal{G}}[U]$. 
        It follows that 
        \begin{align*}
            d_{\vec{\mathcal{G}}[U]}(v)
            \le 
            \begin{cases}
                2|X| + 5|Y| = 2|U| + 3|Y|, &\quad \text{if}\quad v\in X, \\[0.3em]
                2|Y| + 5|X| = 2|U| + 3|X|, &\quad \text{if}\quad v\in Y.
            \end{cases}
        \end{align*}
        Since $|Y| \le |X|$, we have 
        \begin{align*}
            \delta(\vec{\mathcal{G}}[U]) 
            \le 2|X| + 5|Y| = 2|U| + 3|Y|. 
        \end{align*}
        Combining this with~\eqref{equ:minimum-degree-U}, we obtain 
        \begin{align}\label{equ:first-bound-Y-independent-set}
            |Y|
            \ge \frac{1}{3}\left(\delta(\vec{\mathcal{G}}[U]) - 2|U|\right)
            \ge \frac{1}{3}\left(\frac{44}{13}|U| - 2|U|\right)
            = \frac{6}{13}|U|. 
        \end{align}
        Let $I_1 = I_2 = Y$ and $I_3 = I_4 = I_5 = X$. 
        Note from~\eqref{equ:Lemma-nice-partition} that $I_i$ is independent in $G_i$ for every $i \in [5]$, and moreover, it follows from~\eqref{equ:first-bound-Y-independent-set} and~\eqref{equ:U-lower-bound} that 
        \begin{align*}
            \min\left\{|I_1|, \ldots, |I_{5}|\right\}
            = \min\{|X|,~|Y|\}
            = |Y|
            \ge \frac{6}{13}|U|
            \ge \alpha_{2} n. 
        \end{align*}
        So it remains to show that $d_{G_i}(v) \ge \alpha_1 n$ for every $v \in I_i$ and every $i \in [5]$. 
        
        Suppose that $i \in \{1,2\}$ and $v \in I_i = Y$.
        Then it follows from~\eqref{equ:minimum-degree-U},~\eqref{equ:first-bound-Y-independent-set}, and~\eqref{equ:U-lower-bound} that 
        \begin{align*}
            d_{G_{i}[U]}(v)
            & \ge \delta(\vec{\mathcal{G}}[U]) - \left( d_{G_{3-i}[U]}(v) + d_{G_{3}[U]}(v) + d_{G_{4}[U]}(v) + d_{G_{5}[U]}(v) \right)  \\[0.3em]
            & \ge \frac{44}{13}|U| - \left( |X| + 3|X| + 2|Y|\right)  \\[0.3em]
            & = \frac{44}{13}|U| - 4|U| + 2|Y| 
            \ge \frac{44}{13}|U| - 4|U| + 2 \cdot \frac{6}{13} |U|
            = \frac{4}{13}|U|
            \ge \alpha_1 n. 
        \end{align*}
        Here, the inequality $d_{G_{3}[U]}(v) + d_{G_{4}[U]}(v) + d_{G_{5}[U]}(v) \le 3|X| + 2|Y|$ follows from the fact that every pair of vertices in $Y$ has multiplicity at most $2$. 

        Suppose that $i \in \{3,4,5\}$ and $v\in I_i = X$. 
        By symmetry, we may assume that $i = 3$. 
        Then, similarly, it follows from~\eqref{equ:minimum-degree-U} and~\eqref{equ:U-lower-bound} that 
        \begin{align}\label{equ:second-bound-degree-of-X}
            d_{G_{3}[U]}(v)
            & \ge \delta(\vec{\mathcal{G}}[U]) - \left( d_{G_{1}[U]}(v) + d_{G_{2}[U]}(v) + d_{G_{4}[U]}(v) + d_{G_{5}[U]}(v) \right)  \notag \\[0.3em]
            & \ge \frac{44}{13}|U| - \left( |Y| + |Y| + |U| + |U|\right) 
            \ge  \frac{44}{13}|U| - 3|U| 
            = \frac{5}{13}|U|
            \ge \alpha_1 n. 
        \end{align}
        This completes the proof of Claim~\ref{CLAIM:indepedent-set-mindeg}. 
    \end{proof}

    \begin{claim}\label{CLAIM:d-min-first-lower-bound}
        We have $d_{\mathrm{min}} \ge \frac{61}{177} n^2$. 
    \end{claim}
    \begin{proof}[Proof of Claim~\ref{CLAIM:d-min-first-lower-bound}]
        Suppose to the contrary that $d_{\mathrm{min}} \le 61n^2/177$. 
        Let $i_0 \in [5]$ such that $|G_{i_0}| = d_{\mathrm{min}}$. 
        We aim to apply Proposition~\ref{PROP:graph-max-S2}~\eqref{PROP:graph-max-S2-07} to $G_{i_0}$. 
        By Claim~\ref{CLAIM:indepedent-set-mindeg}, it suffices to verify that $\alpha_1 \in [17/100,~23/100]$ and $\rho\coloneqq {|G_{i_0}|}/{n^2} \in [17/50,~7/20]$. 
        
        Since $d_{\mathrm{min}} \le 61n^2/177$ and $n$ is large, we have 
        \begin{align*}
            \alpha_1  
            & = \frac{4}{13 n} \left( \frac{260}{3}d_{\mathrm{min}} -\frac{88}{3}n(n+1) \right)^{1/2} \\
            & \le \frac{4}{13 n} \left( \frac{260}{3} \cdot \frac{61}{177}n^2-\frac{88}{3}n(n+1) \right)^{1/2} 
            = 0.225024...  + o_{n}(1) 
            \le \frac{23}{100}. 
        \end{align*}
        On the other hand, by~\eqref{equ:first-bound-dv-0342067}, we have $d_{\mathrm{min}} \ge {235 n^2}/{687}$, and hence, 
        \begin{align*}
            \alpha_{1} 
            & = \frac{4}{13 n} \left( \frac{260}{3}d_{\mathrm{min}} -\frac{88}{3}n(n+1) \right)^{1/2} \\
            & \ge \frac{4}{13 n} \left( \frac{260}{3} \cdot \frac{235}{687} n^2 -\frac{88}{3}n(n+1) \right)^{1/2}
            = 0.171997... -o_{n}(1)
            \ge \frac{17}{100}.
        \end{align*}
        This shows that $\alpha_1 \in [17/100,~23/100]$. 
        
        In addition, it follows from the assumption and~\eqref{equ:first-bound-dv-0342067} that   
        \begin{align*}
            \rho 
            \coloneqq \frac{|G_{i_0}|}{n^2}
            = \frac{d_{\mathrm{min}}}{n^2} 
            \in \left[\frac{235}{687},~\frac{61}{177}\right] 
            \subseteq \left[\frac{17}{50},~\frac{7}{20} \right]. 
        \end{align*}
        Therefore, by~\eqref{equ:raw-upper-bound-2norm-degree} and by applying Proposition~\ref{PROP:graph-max-S2}~\eqref{PROP:graph-max-S2-07} to $G_{i_0}$, we obtain  
        \begin{align*}
            \frac{d_{2,\mathcal{H}}(v_{i_0})}{n^3}
            & \le \frac{\norm{G_{i_{0}}}_{2}}{n^3} + 2 \cdot \frac{|G_{i_0}|}{n^2} \\
            & \le \max\left\{\alpha_{1}^3 + \left(2\rho-\alpha_{1}^2\right)\left(2\rho+\alpha_{1}^2\right)^{1/2},~4\rho-1+\left(1-2\rho\right)^{3/2}\right\} + 2\rho + o_{n}(1).  
        \end{align*} 
        Suppose that ${d_{2,\mathcal{H}}(v_{i_0})}/{n^3} \le 4\rho-1+\left(1-2\rho\right)^{3/2} + 2 \rho + o_{n}(1)$. 
        Then it follows from the assumption $d_{2, \mathcal{H}}(v_{i_0}) \ge (5/4 - \varepsilon_{\ref{LEMMA:main-lemma-sum-important}})n^3$ that 
        \begin{align*}
        \rho 
        \ge  0.346707... - \varepsilon_1 - o_{n}(1)
        > 61/176 + 10^{-4}, 
        \end{align*}
        where $\varepsilon_1$ is some constant depending on $\varepsilon_{\ref{LEMMA:main-lemma-sum-important}}$, and $\varepsilon_1 \to 0$ as $\varepsilon_{\ref{LEMMA:main-lemma-sum-important}} \to 0$. 
        This is a contradiction to Lemma~\ref{LEMMA:K53-deg-sum}. 
        So we may assume that 
        \begin{align}\label{equ:2norm-deg-v1-upper-bound}
            \frac{d_{2,\mathcal{H}}(v_{i_0})}{n^3} \le \alpha_{1}^3 + \left(2\rho-\alpha_{1}^2\right)\left(2\rho+\alpha_{1}^2\right)^{1/2} + 2 \rho + o_{n}(1). 
        \end{align}
        Recall that 
        \begin{align*}
            \alpha_1 
            = \frac{4}{13 n}\left(\frac{260}{3}d_{\mathrm{min}}-\frac{88}{3}n(n+1)\right)^{1/2} 
            = \frac{4}{13}\left(\frac{260}{3} \rho-\frac{88}{3}\right)^{1/2} + o_{n}(1).
        \end{align*}
        Combining it with~\eqref{equ:2norm-deg-v1-upper-bound} and the assumption $d_{2, \mathcal{H}}(v_{i_0}) \ge (5/4 - \varepsilon_{\ref{LEMMA:main-lemma-sum-important}})n^3$, we obtain 
        \begin{align*}
            \frac{5}{4} - \varepsilon_{\ref{LEMMA:main-lemma-sum-important}}
            \le \frac{64}{2197} \left(\frac{260 \rho }{3}-\frac{88}{3}\right)^{3/2}  -\frac{22 (143 \rho -64)}{6591} \sqrt{\frac{2 (2587 \rho -704)}{3}}  +2 \rho + o_{n}(1).
        \end{align*}
        Solving this inequality for $\rho$, we obtain 
        \begin{align*}
            \rho 
            \ge 0.344635... - \varepsilon_{1} - o_{n}(1)
            > \frac{61}{177}, 
        \end{align*}
        where $\varepsilon_1$ is some constant depending on $\varepsilon_{\ref{LEMMA:main-lemma-sum-important}}$, and $\varepsilon_1 \to 0$ as $\varepsilon_{\ref{LEMMA:main-lemma-sum-important}} \to 0$. 
        This completes the proof of Claim~\ref{CLAIM:d-min-first-lower-bound}. 
    \end{proof}

    \begin{claim}\label{CLAIM:d-min-second-lower-bound}
        We have $d_{\mathrm{min}} \ge \frac{253}{730} n^2$. 
    \end{claim}
    \begin{proof}[Proof of Claim~\ref{CLAIM:d-min-second-lower-bound}]
        We aim to apply Proposition~\ref{PROP:graph-max-S2}~\eqref{PROP:graph-max-S2-022-033} to $G_{i_0}$. 
        By Claim~\ref{CLAIM:indepedent-set-mindeg}, it suffices to verify that $\rho \coloneqq |G_{i_0}|/n^2 = d_{\mathrm{min}}/n^2 \in [17/50, 7/20]$, $\alpha_1 \ge 1/5$, and $\alpha_2 \in [1/3, 2/5]$. 

        It follows from Claim~\ref{CLAIM:d-min-first-lower-bound} that 
        \begin{align*}
            \frac{d_{\mathrm{min}}}{n^2}
            \ge \frac{61}{177}
            \ge \frac{17}{50}, 
        \end{align*}
        and it follows from Lemma~\ref{LEMMA:K53-deg-sum} that 
        \begin{align*}
            \frac{d_{\mathrm{min}}}{n^2}
            \le \frac{61}{176} \cdot  \frac{n(n+1)}{n^2}
            = \frac{61}{176} + o_{n}(1)
            \le \frac{7}{20}. 
        \end{align*}
        This proves that $\rho = d_{\mathrm{min}}/n^2 \in [17/50, 7/20]$. 
        
        Using the lower bound $d_{\mathrm{min}} \ge 61n^2/177$, we obtain 
        \begin{align*}
            \alpha_{1}  
            & = \frac{4}{13 n}\left(\frac{260}{3}d_{\mathrm{min}}-\frac{88}{3}n(n+1)\right)^{1/2} \\
            & \ge \frac{4}{13 n} \left( \frac{260}{3} \cdot \frac{61}{177} n^2 -\frac{88}{3}n(n+1) \right)^{1/2}
            = 0.225024... -o_{n}(1) 
            \ge \frac{1}{5}, 
        \end{align*}
        and 
        \begin{align*}
            \alpha_{2} 
            & = \frac{6}{13 n}\left(\frac{260}{3}d_{\mathrm{min}}-\frac{88}{3}n(n+1)\right)^{1/2} \\
            & \ge \frac{6}{13 n} \left( \frac{260}{3} \cdot \frac{61}{177} n^2 -\frac{88}{3}n(n+1) \right)^{1/2}
            = 0.337536... -o_{n}(1) 
            \ge \frac{1}{3}.
        \end{align*}
        Using the upper bound $d_{\mathrm{min}} \le 61n(n+1)/176$ (by Lemma~\ref{LEMMA:K53-deg-sum}), we obtain 
        \begin{align*}
            \alpha_{2}  
            & \le \frac{6}{13n} \left( \frac{260}{3} \cdot \frac{61}{176}n(n+1) -\frac{88}{3}n(n+1) \right)^{1/2}
            = 0.387402... + o_{n}(1)
            \le \frac{2}{5}.
        \end{align*}
        This proves that $\alpha_1 \ge 1/5$ and $\alpha_2 \in [1/3, 2/5]$. 
        
        Therefore, by~\eqref{equ:raw-upper-bound-2norm-degree} and by applying Proposition~\ref{PROP:graph-max-S2}~\eqref{PROP:graph-max-S2-022-033} to $G_{i_0}$, we obtain  
        \begin{align*}
            \frac{d_{2,\mathcal{H}}(v_{i_0})}{n^3}
            & \le \frac{\norm{G_{i_0}}_{2}}{n^3} + 2 \cdot \frac{|G_{i_0}|}{n^2} \\
            & \le \max\left\{\alpha_{2}^3 + \left(2\rho-\alpha_{2}^2\right)\left(2\rho+\alpha_{2}^2\right)^{1/2},~4\rho-1+\left(1-2\rho\right)^{3/2}\right\} + 2\rho + o_{n}(1).  
        \end{align*}
        Recall from the proof of Claim~\ref{CLAIM:d-min-first-lower-bound} that ${d_{2,\mathcal{H}}(v_{i_0})}/{n^3} \le 4\rho-1+\left(1-2\rho\right)^{3/2} + 2 \rho + o_{n}(1)$ is impossible. 
        Hence, we have 
        \begin{align}\label{equ:2norm-deg-v1-upper-bound-b}
            \frac{d_{2,\mathcal{H}}(v_{i_0})}{n^3}
            \le \alpha_{2}^3 + \left(2\rho-\alpha_{2}^2\right)\left(2\rho+\alpha_{2}^2\right)^{1/2} + 2\rho +o_{n}(1). 
        \end{align}
        Recall that 
        \begin{align*}
            \alpha_2 
            = \frac{6}{13 n}\left(\frac{260}{3}d_{\mathrm{min}}-\frac{88}{3}n(n+1)\right)^{1/2} 
            = \frac{6}{13}\left(\frac{260}{3} \rho-\frac{88}{3}\right)^{1/2} + o_{n}(1).
        \end{align*}
        Combining it with~\eqref{equ:2norm-deg-v1-upper-bound-b} and $d_{2, \mathcal{H}}(v_{i_0}) \ge (5/4 - \varepsilon_{\ref{LEMMA:main-lemma-sum-important}})n^3$ we obtain 
        \begin{align*}
            \frac{5}{4} - \varepsilon_{\ref{LEMMA:main-lemma-sum-important}} 
            \le \frac{192 \sqrt{3} (65 \rho -22)^{3/2}}{2197} +\frac{2 (528-1391 \rho ) \sqrt{3458 \rho -1056}}{2197} +2 \rho + o_{n}(1).
        \end{align*}
        Solving this inequality for $\rho$, we obtain 
        \begin{align*}
            \rho 
            \ge 0.346577... - \varepsilon_{1} - o_{n}(1)
            > \frac{253}{730}, 
        \end{align*}
        where $\varepsilon_1$ is some constant depending on $\varepsilon_{\ref{LEMMA:main-lemma-sum-important}}$, and $\varepsilon_1 \to 0$ as $\varepsilon_{\ref{LEMMA:main-lemma-sum-important}} \to 0$. 
        This completes the proof of Claim~\ref{CLAIM:d-min-second-lower-bound}. 
    \end{proof}

    \begin{claim}\label{CLAIM:G345-lower-bound}
        We have $\min\left\{|G_{3}|, |G_{4}|, |G_{5}|\right\} \ge \frac{321}{926} n^2$. 
    \end{claim}
    \begin{proof}[Proof of Claim~\ref{CLAIM:G345-lower-bound}]
        By symmetry, we may assume that $|G_3| = \min\left\{|G_{3}|, |G_{4}|, |G_{5}|\right\}$. 
        Let $\rho \coloneqq |G_3|/n^2$. 
        It follows from Claim~\ref{CLAIM:d-min-second-lower-bound} that $\rho \ge d_{\mathrm{min}}/n^2 \ge 253/730 > 17/50$. 
        Suppose to the contrary that $\rho < {321}/{926} < {7}/{20}$.
        Then $\rho \in [17/50,~7/20]$.

        It follows from~\eqref{equ:U-lower-bound} and Claim~\ref{CLAIM:d-min-second-lower-bound} that 
        \begin{align*}
            |X| 
            \ge \frac{|U|}{2}
            \ge \frac{1}{2}\left( \frac{260}{3}\cdot \frac{253}{730} n^2-\frac{88}{3}n(n+1)\right)^{1/2}
            = (0.419284... - o_{n}(1))n
            > \frac{2}{5}n, 
        \end{align*}
        Fix a subset $I \subseteq X$ of size $2n/5$. 
        Since $G_{3}[X] = \emptyset$, the set $I$ is independent in $G_3$. 

        For every vertex $v \in I \subseteq X$, it follows from~\eqref{equ:second-bound-degree-of-X} and Claim~\ref{CLAIM:d-min-second-lower-bound} that 
        \begin{align*}
            d_{G_{3}}(v) 
            \ge \frac{5}{13} \left( \frac{260}{3}\cdot \frac{253}{730} n^2-\frac{88}{3}n(n+1)\right)^{1/2}
            = (0.322526... - o_{n}(1)) n
            > \frac{1}{5}n.
        \end{align*}
        By~\eqref{equ:raw-upper-bound-2norm-degree} and by applying Proposition~\ref{PROP:graph-max-S2}~\eqref{PROP:graph-max-S2-022-033} to $G_3$, with the independent set $I$ there corresponding to $I$ here and the parameter $\alpha$ there corresponding to $2/5$, we obtain 
        \begin{align*}
            d_{2,\mathcal{H}}(v_3)
            & \le \frac{\norm{G_{3}}_{2}}{n^3} + 2 \cdot \frac{|G_3|}{n^2} \\
            & \le \max\left\{\alpha^3 + \left(2\rho-\alpha^2\right)\left(2\rho+\alpha^2\right)^{1/2},~4\rho-1+\left(1-2\rho\right)^{3/2}\right\} + 2\rho + o_{n}(1) \\
            & = \frac{8}{125} + \left(2 \rho -\frac{4}{25}\right) \left(2 \rho +\frac{4}{25}\right)^{1/2}+ 2 \rho + o_{n}(1). 
        \end{align*}
        Combining it with the assumption $d_{2,\mathcal{H}}(v_3) \ge (5/4 - \varepsilon_{\ref{LEMMA:main-lemma-sum-important}})n^3$ and solving for $\rho$, we obtain 
        \begin{align*}
            \rho 
            \ge 0.346665... -\varepsilon_1 - o_{n}(1)
            > \frac{321}{926}, 
        \end{align*}
        where $\varepsilon_1$ is some constant depending on $\varepsilon_{\ref{LEMMA:main-lemma-sum-important}}$, and $\varepsilon_1 \to 0$ as $\varepsilon_{\ref{LEMMA:main-lemma-sum-important}} \to 0$. 
        This contradicts our assumption, and completes the proof of Claim~\ref{CLAIM:G345-lower-bound}. 
    \end{proof}
 
    It follows from Claims~\ref{CLAIM:d-min-second-lower-bound} and~\ref{CLAIM:G345-lower-bound} that 
    \begin{align*}
        \sum_{i=1}^{5}|G_i|  + \frac{3}{17}d_{\mathrm{min}}
        \ge 2\cdot \frac{253}{730} n^2 + 3\cdot \frac{321}{926}n^2 + \frac{3}{17} \cdot \frac{253}{730} n^2
        = \frac{5154779}{2872915} n^2 
        > \frac{61}{34}n(n+1), 
    \end{align*}
    which is a contradiction to Lemma~\ref{LEMMA:K53-deg-sum}.
    This completes the proof of Lemma~\ref{LEMMA:main-lemma-sum-important}. 
\end{proof}

\subsection{Proofs of Theorems~\ref{THM:2-norm-density} and~\ref{THM:2-edge-stability}}
In this subsection, we present the proofs of Theorems~\ref{THM:2-norm-density} and~\ref{THM:2-edge-stability}.

\begin{proof}[Proof of Theorem~\ref{THM:2-norm-density}]
    Let $\beta \coloneqq \pi_{\ell_2}(\mathbb{F})$.
    It follows from the construction $\mathbb{B}_{n}$ that $\beta \ge 5/16$. 
    So it suffices to show that $\beta \le 5/16$. 
    Fix an arbitrary small constant $\varepsilon > 0$ with $\varepsilon < \varepsilon_{\ref{LEMMA:main-lemma-sum-important}}$, and let $n$ be sufficiently large such that, in particular, $n \ge \max\left\{N_{\ref{THM:2norm-K53}}, N_{\ref{LEMMA:main-lemma-sum-important}}\right\}$, where $\varepsilon_{\ref{LEMMA:main-lemma-sum-important}}$ and $N_{\ref{LEMMA:main-lemma-sum-important}}$ are the constants given by Lemma~\ref{LEMMA:main-lemma-sum-important}, and $N_{\ref{THM:2norm-K53}}$ is the constant given by Theorem~\ref{THM:2norm-K53}. 
    Let $\mathcal{H}$ be an $n$-vertex $\mathbb{F}$-free $3$-graph with $\norm{\mathcal{H}}_{2} = \mathrm{ex}_{\ell_2}(n,\mathbb{F})$, that is, of the maximum $\ell_{2}$-norm. 

    Since $n$ is sufficiently large, we have $\mathrm{ex}_{\ell_2}(n,\mathbb{F}) \ge \beta n^4 - \varepsilon n^4/8$ and $61 n^2 \le \varepsilon n^3/2$. 
    It follows from Lemma~\ref{LEMMA:2norm-degree-regular} (Inequality~\eqref{equ:LEMMA-near-regular-extremal-graph}) and Lemma~\ref{LEMMA:CILLP24-35-37}~\eqref{LEMMA:CILLP24-35-37-a} that for every $v\in V(\mathcal{H})$, 
    \begin{align}\label{equ:THM-min-ell2-degree-extremal}
        d_{2,\mathcal{H}}(v)
        & \ge \frac{1}{n} \sum_{v\in V(\mathcal{H})} d_{2,\mathcal{H}}(v) - 60 n^2 \notag \\
        & \ge \frac{4\norm{\mathcal{H}_{2}} - n^3}{n} - 60 n^2   
        = \frac{4 \cdot \mathrm{ex}_{\ell_2}(n,\mathbb{F})}{n} - 61 n^2 
        \ge 4\beta n^3 - \varepsilon n^3.
    \end{align}
    By Theorem~\ref{THM:2norm-K53}, we are done if $K_{5}^{3} \not\subseteq \mathcal{H}$. 
    So we may assume that there exists a $5$-set $S = \{v_1, \ldots, v_{5}\} \subseteq V(\mathcal{H})$ that induces a copy of $K_{5}^{3}$ in $\mathcal{H}$.
    Note from the inequality above that for every $i \in [5]$, we have
    \begin{align*}
        d_{2,\mathcal{H}}(v_{i}) 
        \ge 4\beta n^3 - \varepsilon n^3
        \ge \left(\frac{5}{4} - \varepsilon_{\ref{LEMMA:main-lemma-sum-important}}\right) n^3. 
    \end{align*}
    So it follows from Lemma~\ref{LEMMA:main-lemma-sum-important} that $\mathbb{F} \subseteq \mathcal{H}$, a contradiction. Therefore, $\pi_{\ell_2}(\mathbb{F}) \le 5/16$, and this completes the proof of Theorem~\ref{THM:2-norm-density}. 
\end{proof}

Next, we present the proof of Theorem~\ref{THM:2-edge-stability}.
It will be more convenient to work with $\mathrm{N}(\mathbb{S}_{2}, \mathcal{H})$ instead of the $\ell_2$-norm of $\mathcal{H}$, as the former is more transparent from a combinatorial perspective.
Therefore, define 
\begin{align*}
    \mathrm{ex}(n,\mathbb{S}_{2}, \mathbb{F})
    & \coloneqq \max\big\{ \mathrm{N}(\mathbb{S}_2, \mathcal{H}) \colon \text{$v(\mathcal{H}) = n$ and $\mathcal{H}$ is $\mathbb{F}$-free} \big\} \quad\text{and}\\[0.3em]
    \pi(\mathbb{S}_2, \mathbb{F})
    & \coloneqq \lim_{n\to \infty} \frac{\mathrm{ex}(n,\mathbb{S}_{2}, \mathbb{F})}{n^{4}}. 
\end{align*}
It follows from~\eqref{equ:2-norm-in-S2-edge-3gp} that $\big|\norm{\mathcal{H}}_{2} - 2\mathrm{N}(\mathbb{S}_2, \mathcal{H}) \big| = 3|\mathcal{H}| \le n^3$ and hence, $\pi_{\ell_2}(\mathbb{F}) = 2 \cdot \pi(\mathbb{S}_2, \mathbb{F})$. 
%
%
\begin{proof}[Proof of Theorem~\ref{THM:2-edge-stability}]
    It suffices to show that for every $\varepsilon > 0$ there exist $\delta > 0$ and $N_0$ such that every $\mathbb{F}$-free $3$-graph $\mathcal{H}$ on $n \ge N_0$ vertices with $\mathrm{N}(\mathbb{S}_2, \mathcal{H}) \ge \mathrm{ex}(n,\mathbb{S}_{2}, \mathbb{F}) - \delta n^4$ is bipartite after removing at most $\varepsilon n^3$ edges. 
    
    Fix $\varepsilon > 0$. We may assume that $\varepsilon$ is sufficiently small. 
    Let $\delta > 0$ be sufficiently small and $N_0$ be sufficiently large such that, in particular, $32 \delta^{1/2} < \varepsilon_{\ref{LEMMA:main-lemma-sum-important}}$, $9\delta^{1/2} < \delta_{\ref{THM:2norm-K53}}(\varepsilon/2)$, and $N_0 > \max\left\{2 N_{\ref{THM:2norm-K53}}(\varepsilon/2), 2 N_{\ref{LEMMA:main-lemma-sum-important}}\right\}$, where $\varepsilon_{\ref{LEMMA:main-lemma-sum-important}}, N_{\ref{LEMMA:main-lemma-sum-important}}$ are the constants given by Lemma~\ref{LEMMA:main-lemma-sum-important}, and $\delta_{\ref{THM:2norm-K53}}(\varepsilon/2), N_{\ref{THM:2norm-K53}}(\varepsilon/2)$ are the constants given by Theorem~\ref{THM:2norm-K53}. 
    Let $\mathcal{H}$ be an $\mathbb{F}$-free $3$-graph on $n \ge N_0$ vertices with 
    \begin{align}\label{equ:S2-H-lower-bound}
        \mathrm{N}(\mathbb{S}_{2}, \mathcal{H})
        \ge \mathrm{ex}(n,\mathbb{S}_{2}, \mathbb{F}) - \delta n^4 
        > \left( \pi(\mathbb{S}_2, \mathbb{F}) - \delta \right) n^4 - \delta n^4 
        = \left( \pi(\mathbb{S}_2, \mathbb{F}) - 2\delta \right) n^4.
    \end{align}
    It follows from Theorem~\ref{THM:2-norm-density} that $\pi(\mathbb{S}_2, \mathbb{F}) = \pi_{\ell_2}(\mathbb{F})/2 = 5/32$. 
    
    Define 
    \begin{align*}
        Z_{\delta} 
        \coloneqq \left\{v \in V(\mathcal{H}) \colon d_{\mathbb{S}_{2}, \mathcal{H}}(v) \le \left(5/8 - 4 \delta^{1/2}\right)n^3 \right\}.
    \end{align*}

    \begin{claim}\label{CLAIM:Z-delta-upper-bound}
        We have $|Z_{\delta}| \le \delta^{1/2} n$.
    \end{claim}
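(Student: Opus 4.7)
The plan is an averaging-style deletion argument by contradiction. Suppose $|Z_{\delta}| > \delta^{1/2} n$. I would pick any subset $Z' \subseteq Z_{\delta}$ of size $m \coloneqq \lceil \delta^{1/2} n \rceil$ and consider $\mathcal{H} - Z'$, which is still $\mathbb{F}$-free on $n - m$ vertices. By Theorem~\ref{THM:2-norm-density} (and the resulting identity $\pi(\mathbb{S}_{2},\mathbb{F}) = 5/32$) applied to $\mathcal{H}-Z'$, provided $n$ is sufficiently large, $\mathrm{N}(\mathbb{S}_{2}, \mathcal{H} - Z') \le (5/32 + \delta)(n - m)^4$. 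On the other hand, every copy of $\mathbb{S}_{2}$ in $\mathcal{H}$ that uses at least one vertex of $Z'$ is counted at least once in $\sum_{v \in Z'} d_{\mathbb{S}_{2}, \mathcal{H}}(v)$, and by the definition of $Z_{\delta}$ each summand is at most $(5/8 - 4\delta^{1/2})n^3$. Hence
\begin{align*}
    \mathrm{N}(\mathbb{S}_{2}, \mathcal{H})
    \le \mathrm{N}(\mathbb{S}_{2}, \mathcal{H} - Z') + \sum_{v \in Z'} d_{\mathbb{S}_{2}, \mathcal{H}}(v)
    \le \left(\tfrac{5}{32} + \delta\right)(n - m)^4 + m\left(\tfrac{5}{8} - 4\delta^{1/2}\right) n^3.
\end{align*}

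Combining this with the hypothesis $\mathrm{N}(\mathbb{S}_{2}, \mathcal{H}) > (5/32 - 2\delta)n^4$ from~\eqref{equ:S2-H-lower-bound}, dividing through by $n^4$, and inserting $m/n = \delta^{1/2} + O(1/n)$, the inequality reduces to
\begin{align*}
    \tfrac{5}{32} - 2\delta
    \;<\; \left(\tfrac{5}{32} + \delta\right)(1 - \delta^{1/2})^4 + \delta^{1/2}\left(\tfrac{5}{8} - 4\delta^{1/2}\right) + o_{n}(1).
\end{align*}
Expanding $(1 - \delta^{1/2})^4 = 1 - 4\delta^{1/2} + 6\delta + O(\delta^{3/2})$, the two $\delta^{1/2}$-contributions on the right, namely $-(5/8)\delta^{1/2}$ and $+(5/8)\delta^{1/2}$, cancel exactly, and the right-hand side simplifies to $\tfrac{5}{32} - \tfrac{33}{16}\delta + O(\delta^{3/2}) + o_{n}(1)$. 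Comparing with the left-hand side $\tfrac{5}{32} - \tfrac{32}{16}\delta$ leaves $\tfrac{1}{16}\delta < O(\delta^{3/2}) + o_{n}(1)$, which is false for all sufficiently small $\delta$ and sufficiently large $n$, giving the desired contradiction.

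The argument is essentially bookkeeping, and the only delicate point is the exact cancellation of the $\delta^{1/2}$-terms; this cancellation is precisely why the threshold $5/8 - 4\delta^{1/2}$ in the definition of $Z_{\delta}$ is paired with the target bound $\delta^{1/2} n$. No structural property of $\mathbb{F}$ beyond Theorem~\ref{THM:2-norm-density} enters, and the only largeness requirement on $n$ is that $n - m \ge (1 - \delta^{1/2})n$ be large enough for the upper bound $(5/32 + \delta)(n-m)^4$ to be valid.
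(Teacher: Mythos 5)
Your proposal is correct and follows essentially the same route as the paper: both delete a $\delta^{1/2}n$-subset of $Z_{\delta}$, bound $\mathrm{N}(\mathbb{S}_2,\cdot)$ on the remainder via $\pi(\mathbb{S}_2,\mathbb{F})=5/32$, bound the stars meeting the deleted set by the $\mathbb{S}_2$-degree threshold from the definition of $Z_{\delta}$, and exploit the exact cancellation of the $\delta^{1/2}$-terms to contradict~\eqref{equ:S2-H-lower-bound}. No gaps.
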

    \begin{proof}[Proof of Claim~\ref{CLAIM:Z-delta-upper-bound}]
        Suppose to the contrary that $|Z_{\delta}| > \delta^{1/2} n$.
        Fix a subset $Z \subseteq Z_{\delta}$ of size $\delta^{1/2} n$, and let $U \coloneqq V(\mathcal{H}) \setminus Z$. 
        Since $n$ is large, we have 
        \begin{align*}
            \mathrm{N}(\mathbb{S}_{2}, \mathcal{H}[U]) 
            \le \left(\pi(\mathbb{S}_2, \mathbb{F}) + \delta \right)|U|^4
            & = \left(\frac{5}{32} + \delta \right) \left(1-\delta^{1/2} \right)^4 n^4  \\
            & \le \left(\frac{5}{32} \left(1-\delta^{1/2} \right)^4 + \delta \right) n^4.  
        \end{align*}
        Since $\delta$ is small, we have 
        \begin{align*}
            \left(1-\delta^{1/2} \right)^4 
            = 1 -4 \delta^{1/2} +6 \delta -4 \delta ^{3/2}+\delta ^2 \le 1 -4 \delta^{1/2} +6 \delta, 
        \end{align*}
        and hence, 
        \begin{align*}
            \mathrm{N}(\mathbb{S}_{2}, \mathcal{H}[U]) 
            \le \left(\frac{5}{32} \left(1 -4 \delta^{1/2} +6 \delta \right) + \delta \right) n^4
            \le \left(\frac{5}{32} - \frac{5}{8} \delta^{1/2} + 2 \delta \right) n^4
        \end{align*}
        Combining this with the definition of $Z_{\delta}$, we obtain that 
        \begin{align*}
            \mathrm{N}(\mathbb{S}_{2}, \mathcal{H})
            & \le \mathrm{N}(\mathbb{S}_{2}, \mathcal{H}[U]) + |Z| \cdot \left(\frac{5}{8} - 4 \delta^{1/2}\right)n^4  \\
            &  \le \left(\frac{5}{32} - \frac{5}{8} \delta^{1/2} + 2 \delta \right) n^4 + \delta^{1/2}n \cdot \left(\frac{5}{8} - 4 \delta^{1/2}\right)n^3 
            = \left(\frac{5}{32} - 2 \delta \right) n^4,  
        \end{align*}
        which is a contradiction to~\eqref{equ:S2-H-lower-bound}.  
        Therefore, we have $|Z_{\delta}| \le \delta^{1/2} n$. 
    \end{proof}

    Let $\mathcal{G}$ denote the induced subgraph of $\mathcal{H}$ on $W \coloneqq V(\mathcal{H}) \setminus Z_{\delta}$. 
    Let $\mathcal{E}_{1}$ denote the set of edges in $\mathcal{H}$ that have nonempty intersection with $Z_{\delta}$. 
    By Claim~\ref{CLAIM:Z-delta-upper-bound}, $|Z_{\delta}| \le \delta^{1/2} n$, so we have 
    \begin{align*}
        |\mathcal{E}_{1}| 
        \le |Z_{\delta}| \cdot \binom{n-1}{2} 
        \le \delta^{1/2} n^3 \le \varepsilon n^3/2. 
    \end{align*}
    Note that every vertex in $\mathcal{G}$ is not contained in $Z_{\delta}$, so by the definition of $Z_{\delta}$, we have 
    \begin{align*}
        \delta_{\mathbb{S}_{2}}(\mathcal{G})
        & \ge \left(\frac{5}{8} - 4 \delta^{1/2}\right)n^3 - 4! \cdot |Z_{\delta}| \cdot \binom{n-2}{2}  \\
        & \ge \left(\frac{5}{8} - 4 \delta^{1/2}\right)n^3 - 24 \delta^{1/2} n \cdot \binom{n-2}{2}
        \ge \left(\frac{5}{8} - 16 \delta^{1/2}\right)n^3.
    \end{align*}
    It follows from~\eqref{equ:defb-2norm-degree} that 
    \begin{align*}
        \delta_{\ell_2}(\mathcal{G})
        \ge 2 \delta_{\mathbb{S}_{2}}(\mathcal{G})
        \ge \left(5/4 - 32 \delta^{1/2}\right)n^3
        \ge \left(5/4 - \varepsilon_{\ref{LEMMA:main-lemma-sum-important}}\right)n^3. 
    \end{align*}
    Therefore, by Lemma~\ref{LEMMA:main-lemma-sum-important} and $\mathbb{F}$-freeness of $\mathcal{G}$, we conclude that $\mathcal{G}$ is $K_{5}^{3}$-free.
    
    Since $\mathcal{G}$ is $K_{5}^{3}$-free, and by Lemma~\ref{LEMMA:CILLP24-35-37}~\eqref{LEMMA:CILLP24-35-37-a} 
    \begin{align*}
        \norm{\mathcal{G}}_{2} 
        & = \frac{1}{4} \left( \sum_{v\in W} d_{2,\mathcal{G}}(v) + 3|\mathcal{G}| \right)
        \ge \frac{1}{4} \cdot |W| \cdot \delta_{\ell_2}(\mathcal{G}) \\
        & \ge \frac{1}{4} \left(n - \delta^{1/2} n\right)\cdot \left(\frac{5}{4} - 32 \delta^{1/2}\right)n^3
        \ge \left(\frac{5}{16}-9\delta^{1/2} \right)n^4
        \ge \left(\frac{5}{16}-\delta_{\ref{THM:2norm-K53}}(\varepsilon/2) \right)n^4. 
    \end{align*}
It follows from Theorem~\ref{THM:2norm-K53} that $\mathcal{G}$ is bipartite after removing a set $\mathcal{E}_{2}$ of at most $\varepsilon n^3/2$ edges. 
    Therefore, $\mathcal{H}$ is bipartite after removing a set of $|\mathcal{E}_{1}| + |\mathcal{E}_{2}| \le \varepsilon n^3$ edges. 
    This completes the proof of Theorem~\ref{THM:2-edge-stability}. 
\end{proof}

\section{Proof of Theorem~\ref{THM:AES-Fano-L2norm}}\label{SEC:exact-degree-stability}
In this section, we present the proof of Theorem~\ref{THM:AES-Fano-L2norm}.
We begin by establishing the $\ell_{2}$-degree-stability of $\mathbb{F}$ with respect to the family $\mathfrak{B}$.
Recall from Theorem~\ref{THM:2-edge-stability} that $\mathbb{F}$ has already been shown to be $\ell_{2}$-edge-stable with respect to $\mathfrak{B}$.
Therefore, by Theorem~\ref{THM:CL-Lp-general}, it remains to prove the following result.

\begin{theorem}\label{THM:Fano-vertex-extendable}
    The $3$-graph $\mathbb{F}$ is $\ell_{2}$-vertex-extendable with respect to $\mathfrak{B}$. 
\end{theorem}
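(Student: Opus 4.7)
The plan is to prove that if $\mathcal{H}$ is $\mathbb{F}$-free on $n$ vertices with $\delta_{\ell_2}(\mathcal{H}) \ge (1-\delta)\,\mathrm{exdeg}_2(n,\mathbb{F}) = (5/4 - O(\delta))n^3$ and $\mathcal{H}-v$ is bipartite with parts $V_1, V_2$, then $\mathcal{H}$ itself is bipartite. Observing that bipartiteness of $\mathcal{H}$ is equivalent to either $L_\mathcal{H}(v)[V_1]$ or $L_\mathcal{H}(v)[V_2]$ being empty, the goal is to rule out the case where both contain an edge.

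I would begin by combining Lemma~\ref{LEMMA:CILLP24-35-37}~\eqref{LEMMA:CILLP24-35-37-a} with the degree hypothesis to derive $\norm{\mathcal{H}}_2 \ge (5/16 - O(\delta))n^4$, and then applying Theorem~\ref{THM:2-edge-stability} to deduce that $\mathcal{H}$ becomes bipartite after removing $o_\delta(n^3)$ edges. A comparison between this near-bipartition and the exact bipartition of $\mathcal{H} - v$ then yields $|V_1|, |V_2| = (1/2 \pm o(1))n$ and $|\mathbb{B}[V_1, V_2] \setminus \mathcal{H}| = o_\delta(n^3)$. Inequality \eqref{equ:raw-upper-bound-2norm-degree} together with the trivial bound $\norm{L_\mathcal{H}(v)}_2 \le 2(n-2)|L_\mathcal{H}(v)|$ further gives $|L_\mathcal{H}(v)| = \Omega(n^2)$.

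Next, assuming for contradiction that both $L_\mathcal{H}(v)[V_1]$ and $L_\mathcal{H}(v)[V_2]$ contain an edge, I would fix $\{a_1, a_2\} \in L_\mathcal{H}(v)[V_1]$ and $\{b_1, b_2\} \in L_\mathcal{H}(v)[V_2]$ and seek a third edge $\{c_1, c_2\} \in L_\mathcal{H}(v)$, disjoint from these four vertices, such that the four triples $\{a_1, b_1, c_1\}, \{a_2, b_2, c_1\}, \{a_1, b_2, c_2\}, \{a_2, b_1, c_2\}$ all belong to $\mathcal{H}$. A direct verification (through the labeling $v \mapsto 7$, $a_1 \mapsto 1$, $a_2 \mapsto 4$, $b_1 \mapsto 2$, $b_2 \mapsto 5$, $c_1 \mapsto 3$, $c_2 \mapsto 6$) shows that these four triples together with $\{v,a_1,a_2\}, \{v,b_1,b_2\}, \{v,c_1,c_2\}$ form a copy of $\mathbb{F}$, contradicting $\mathbb{F}$-freeness. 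Setting $C_1 := N_\mathcal{H}(\{a_1,b_1\}) \cap N_\mathcal{H}(\{a_2,b_2\})$ and $C_2 := N_\mathcal{H}(\{a_1,b_2\}) \cap N_\mathcal{H}(\{a_2,b_1\})$, the required $\{c_1, c_2\}$ must meet both $C_1$ and $C_2$.

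The main obstacle is bounding each single-pair codegree deficit $m(\{a_j, b_k\}) := (n - 2) - d_\mathcal{H}(\{a_j, b_k\})$ by $o(n)$, which together with $|L_\mathcal{H}(v)| = \Omega(n^2)$ would yield $|V(\mathcal{H}) \setminus C_i| = o(n)$ and hence guarantee a valid $\{c_1, c_2\}$. The per-vertex $\ell_2$-degree hypothesis only directly bounds $\sum_w m(\{u, w\}) = O(\delta n^2)$ for each $u \in \{a_1, a_2, b_1, b_2\}$ via \eqref{equ:def-2norm-degree}, which does not control individual pairs when $\delta$ is a fixed constant. I expect the most delicate part of the proof to involve an auxiliary step---possibly invoking $\mathbb{F}$-freeness through Fact~\ref{FACT:find-Fano}~\eqref{FACT:Find-Fano-K222} and the $K_5^3$-freeness coming from Lemma~\ref{LEMMA:main-lemma-sum-important}---to select $\{a_1, a_2\}$ and $\{b_1, b_2\}$ whose endpoints are ``typical'' in the sense of having small aggregate codegree defect, from which each $m(\{a_j, b_k\}) = o(n)$ and the desired Fano construction both follow.
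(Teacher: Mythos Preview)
Your overall strategy is sound, but the obstacle you flag in the final paragraph is genuine and your proposed workarounds do not resolve it. If $L_{\mathcal{H}}(v)[V_1]$ and $L_{\mathcal{H}}(v)[V_2]$ each contain only a single edge, you have no freedom to select ``typical'' endpoints $a_1,a_2,b_1,b_2$, and nothing in the hypotheses prevents the four specific cross-pair codegrees $d_{\mathcal{H}}(\{a_j,b_k\})$ from being $o(n)$: the $\ell_2$-degree condition on $b_k$, via~\eqref{equ:def-2norm-degree}, controls only the aggregate $\sum_u d_{\mathcal{H}}^2(\{b_k,u\})$, which is insensitive to a handful of small individual codegrees. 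Neither Fact~\ref{FACT:find-Fano}~\eqref{FACT:Find-Fano-K222} (a negative constraint) nor Lemma~\ref{LEMMA:main-lemma-sum-important} (which yields only $K_5^3$-freeness, already forced by bipartiteness of $\mathcal{H}-v$) provides the per-pair control you need.

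The paper avoids this codegree trap with two changes. First, rather than going through edge-stability, it proves directly from~\eqref{equ:def-2norm-degree} that for \emph{every} vertex $u\in V_i$ the link $L_{\mathcal{H}-v}(u)$ is missing only $O(\xi n^2)$ edges from $\binom{V_{3-i}}{2}\cup K[V_1,V_2]$; this is a per-vertex link-density statement valid even for the worst vertex, and it also yields $|V_i|=(1/2\pm O(\xi))n$ without invoking Theorem~\ref{THM:2-edge-stability}. Second, it fixes only \emph{one} bad edge $v_1v_2\in L_{\mathcal{H}}(v)[V_2]$ (after arranging $|L_v[V_1]|\ge|L_v[V_2]|$) and chooses the remaining four vertices $u_1,u_2,w_1\in V_1$, $w_2\in V_2$ uniformly at random. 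The constraints needed to complete the Fano then take the form ``$u_iw_j\in L_{\mathcal{H}-v}(v_k)$'' with $v_k$ fixed but $u_i,w_j$ both random, so each fails with probability at most the missing-edge density in $L(v_k)$, namely $O(\xi)$. The remaining constraint $\{u_1u_2,\,w_1w_2\}\subseteq L_{\mathcal{H}}(v)$ is handled by a union bound using only $|L_{\mathcal{H}}(v)|\ge 0.34\,n^2$ (from Lemma~\ref{LEMMA:warmup-bound-degree}) together with the trick $2|L_v[V_1]|\ge |L_v[V_1]|+|L_v[V_2]|$. Fixing just one edge rather than two is precisely what converts your problematic two-fixed-vertex codegree conditions into tractable one-fixed-vertex link-density conditions.
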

\begin{proof}[Proof of Theorem~\ref{THM:Fano-vertex-extendable}]
    Fix a sufficiently small constant $\xi > 0$ and let $n$ be a sufficiently large integer. 
    Let $\mathcal{H}$ be an $(n+1)$-vertex $\mathbb{F}$-free $3$-graph with 
    \begin{align}\label{equ:lower-bound-delta_2H}
        \delta_{\ell_{2}}(\mathcal{H}) 
        \ge \left(4 \pi_{\ell_{2}}(\mathbb{F}) - \xi \right) n^3
        = \left(\frac{5}{4} - \xi \right)n^3.
    \end{align}
    Suppose that $v_{\ast} \in V(\mathcal{H})$ is a vertex such that $\mathcal{H} - v_{\ast} \in \mathfrak{B}$, that is, the $3$-graph $\mathcal{G} \coloneqq \mathcal{H} - v_{\ast}$ is bipartite. 
    Let $V_{1} \cup V_2 = V(\mathcal{G})$ be a bipartition such that $\mathcal{G}[V_1] = \mathcal{G}[V_2] = \emptyset$. 

    \begin{claim}\label{CLAIM:vtx-ext-l2-min-deg-G}
        We have $\delta_{\ell_{2}}(\mathcal{G}) \ge \left( 5/4 - 2 \xi \right)n^3$. 
    \end{claim}
    \begin{proof}[Proof of Claim~\ref{CLAIM:vtx-ext-l2-min-deg-G}]
        By~\eqref{equ:defb-2norm-degree}, it suffices to show that $\delta_{\mathbb{S}_{2}}(\mathcal{G}) \ge \left( 5/8 - \xi \right)n^3$. 
        Fix a vertex $v \in V(\mathcal{G})$ with $d_{\mathbb{S}_{2},\mathcal{G}}(v) = \delta_{\mathbb{S}_{2}}(\mathcal{G})$. 
        Combining~\eqref{equ:lower-bound-delta_2H} and~\eqref{equ:defb-2norm-degree}, we see that 
        \begin{align*}
            d_{\mathbb{S}_{2},\mathcal{H}}(v)
            \ge \frac{d_{2,\mathcal{H}}(v) - 3 d_{\mathcal{H}}(v)}{2} 
            \ge \frac{1}{2} \left(\left(\frac{5}{4} - \xi \right)n^3 - 3\binom{n}{2}\right)
            \ge \left(\frac{5}{8} - \frac{\xi}{2} \right)n^3 - n^2. 
        \end{align*}
        Since there are at most $4!\binom{n-1}{2} \le 12n^2$ copies of $\mathbb{S}_{2}$ in $\mathcal{H}$ containing both $v$ and $v_{\ast}$, we have 
        \begin{align*}
            d_{\mathbb{S}_{2},\mathcal{G}}(v)
            \ge d_{\mathbb{S}_{2},\mathcal{H}}(v) - 12n^2 
            \ge \left(\frac{5}{8} - \frac{\xi}{2} \right)n^3 - n^2 -12 n^2
            \ge \left(\frac{5}{8} - \xi \right)n^3,
        \end{align*}
        which proves Claim~\ref{CLAIM:vtx-ext-l2-min-deg-G}. 
    \end{proof}

    \begin{claim}\label{CLAIM:size-of-Vi}
        We have $\left| |V_i| - \frac{n}{2} \right| \le 2 \xi n$ for $i \in \{1,2\}$. 
    \end{claim}
    \begin{proof}[Proof of Claim~\ref{CLAIM:size-of-Vi}]
        Let  $\alpha \coloneqq |V_1|/n$. 
        For every $v \in V_2$, it follows from Claim~\ref{CLAIM:vtx-ext-l2-min-deg-G} and~\eqref{equ:def-2norm-degree} that 
        \begin{align*}
            \left(\frac{5}{4} -2\xi\right)n^3 
            \le d_{2,\mathcal{G}}(v)
            & = \sum_{u\in V_1 \cup V_2} d^{2}_{\mathcal{G}}(u,v) + 2\sum_{e\in L_{\mathcal{G}}(v)} d_{\mathcal{G}}(e) - d_{\mathcal{G}}(v) \\ 
            &\le |V_1|n^2 + |V_2||V_1|^2 +  2 \left(\binom{|V_1|}{2} |V_2| + |V_1| |V_2| n \right) \\
            & \le |V_1|n^2 + |V_2||V_1|^2 + |V_1|^2 |V_2| + 2 |V_1| |V_2| n
            =\left(3\alpha - 2\alpha^3\right) n^3. 
        \end{align*}
        It follows that $\alpha \ge 1/2 -2 \xi$, which means that $|V_1| \ge (1/2 -2 \xi) n$. 
        By symmetry, we also have $|V_2| \ge (1/2 -2 \xi) n$. This completes the proof of Claim~\ref{CLAIM:size-of-Vi}. 
    \end{proof}

    For convenience, for every $v \in V$, let $L_{v} \coloneqq L_{\mathcal{G}}(v)$. 
    
    \begin{claim}\label{CLAIM:near-full-degree}
        For every $i \in \{1,2\}$ and every $v \in V_i$, we have 
        \begin{align*}
            \left|L_v[V_{3-i}]\right| 
            \ge \binom{|V_{3-i}|}{2} - 15 \xi n^2 
            \quad\text{and}\quad  
            \left|L_v[V_1, V_2]\right| 
            \ge |V_1||V_2| - 5\xi n^2. 
        \end{align*}
    \end{claim}
    \begin{proof}[Proof of Claim~\ref{CLAIM:near-full-degree}]
        Fix $i \in \{1,2\}$ and  $v \in V_i$. 
        By symmetry, we may assume that $i = 2$. 
        Let $\alpha \coloneqq |V_1|/n$, $x \coloneqq \binom{|V_1|}{2} - |L_{v}[V_1]|$, and $y \coloneqq |V_1||V_2| - |L_{v}[V_1, V_2]|$. 
        Similar to the proof of Claim~\ref{CLAIM:size-of-Vi}, it follows from Claim~\ref{CLAIM:vtx-ext-l2-min-deg-G} and~\eqref{equ:def-2norm-degree} that 
        \begin{align*}
        \left(\frac{5}{4} -2\xi\right)n^3 
        \le d_{2, \mathcal{G}}(v) 
        & = \sum_{u\in V_1 \cup V_2} d^{2}_{\mathcal{G}}(u,v) + 2\sum_{e\in L_{\mathcal{G}}(v)} d_{\mathcal{G}}(e) - d_{\mathcal{G}}(v) \\
        &\le |V_1|n^2 + |V_2||V_1|^2 + 2 \left(\left|L_v[V_{1}]\right| \cdot \left|V_2\right| + \left|L_v[V_1, V_2]\right| \cdot n \right) \\
        & = |V_1|n^2 + |V_2||V_1|^2 +  2 \left(\binom{|V_1|}{2} |V_2| + |V_1| |V_2| n \right) - 2 \left( x |V_2| + y n \right) \\
        & \le \left(3\alpha - 2\alpha^3\right) n^3 - \left( x (1-\alpha)n + y n \right).
        \end{align*}
        Since, by Claim~\ref{CLAIM:size-of-Vi}, $\alpha \le 1/2 + 2\xi \le 2/3$, we have 
        \begin{align*}
            \left(\frac{5}{4} -2\xi\right)n^3 
            & \le \left(3\alpha - 2\alpha^3\right) n^3 - \left( x (1-\alpha)n + y n \right) \\
            & \le \left(\frac{5}{4} + 3\xi\right)n^3 - \left( \frac{xn}{3} + y n \right). 
        \end{align*}
        It follows that $x\le 15\xi n^2$ and $y \le 5 \xi n^2$,  
        which proves Claim~\ref{CLAIM:near-full-degree}. 
    \end{proof}

    Let $L_{v_{\ast}} \coloneqq L_{\mathcal{H}}(v_{\ast})$. 
    Recall from the assumption~\eqref{equ:lower-bound-delta_2H} that $|L_{v_{\ast}}| \ge (5/4 - \xi) n^3$. 
    Similar to~\eqref{equ:first-bound-dv-0342067}, it follows from Lemma~\ref{LEMMA:warmup-bound-degree} that 
    \begin{align}\label{equ:size-Lv-ast}
        \frac{|L_{v_{\ast}}|}{n^2}
        \ge f^{-1}\left(\frac{5}{4} - \xi\right) -o_{n}(1)
        = 0.342067... - \xi_1 - o_{n}(1)
        \ge \frac{17}{50},
    \end{align}
    where $\xi_1$ is some constant depending on $\xi$, and $\xi_1 \to 0$ as $\xi \to 0$.
        
    \begin{claim}\label{CLAIM:H-is-bipartite}
        We have $\mathcal{H} \in \mathfrak{B}$, that is, $\mathcal{H}$ is bipartite. 
    \end{claim}
    \begin{proof}[Proof of Claim~\ref{CLAIM:H-is-bipartite}]
        To show that $\mathcal{H}$ is bipartite, it suffices to show that $L_{v_{\ast}}[V_i] = \emptyset$ for some $i \in \{1,2\}$. 
        Suppose to the contrary that $L_{v_{\ast}}[V_1] \neq \emptyset$ and $L_{v_{\ast}}[V_2] \neq \emptyset$. 
        By symmetry, we may assume that $|L_{v_{\ast}}[V_1]| \ge |L_{v_{\ast}}[V_2]|$.
        
        Fix a link $v_1 v_2 \in L_{v_{\ast}}[V_2]$. 
        Choose uniformly at random three distinct vertices $u_1, u_2, w_1$ from $V_1$, and independently, choose uniformly at random a vertex $w_2$ from $V_2 \setminus \{v_{1}, v_2\}$. 
        Observe that we obtain a copy of $\mathbb{F}$ in $\mathcal{H}$ if the following events $A_1, A_2, A_3$ all occur:
        \begin{align*}
            A_1 \colon \{u_1 u_2, w_1 w_2\}  \subseteq L_{v_{\ast}}, \quad 
            A_2 \colon \{u_1 w_2, u_2 w_1\} \subseteq L_{v_1}, \quad 
            A_3 \colon \{u_1 w_1, u_2 w_2\} \subseteq L_{v_2}. 
        \end{align*}
        %
        Let us compute the probabilities of these events occurring.
        First, by Claim \ref{CLAIM:size-of-Vi}, 
        \begin{align*}
            \mathbb{P}\left[u_1u_2 \in L_{v_{\ast}} \right]
            & = \frac{|L_{v_{\ast}}[V_1]|}{\binom{|V_1|}{2}}
            \ge \frac{2 |L_{v_{\ast}}[V_1]|}{|V_1|^2} 
            \ge \frac{|L_{v_{\ast}}[V_1]| + |L_{v_{\ast}}[V_2]|}{\left(1/2+2\xi\right)^2 n^2},  \quad\text{and}\quad \\[0.5em]
            \mathbb{P}\left[w_1w_2 \in L_{v_{\ast}} \right]
            & = \frac{|L_{v_{\ast}}[V_1, V_2] - \{u_1, u_2, v_1, v_2\}|}{\left(|V_1|-2\right)\left(|V_2| - 2\right)}
            \ge \frac{|L_{v_{\ast}}[V_1, V_2]| - 2n}{\left(1/2+2\xi\right)^2 n^2}.
        \end{align*}
        It follows that 
        \begin{align*}
            \mathbb{P}\left[u_1u_2 \in L_{v_{\ast}} \right] + \mathbb{P}\left[w_1w_2 \in L_{v_{\ast}} \right] 
            & \ge \frac{|L_{v_{\ast}}[V_1]| + |L_{v_{\ast}}[V_2]| + |L_{v_{\ast}}[V_1, V_2]| - 2n}{\left(1/2+2\xi\right)^2 n^2} \\
            & \ge \frac{17n^2/50 - 2n}{\left(1/2+2\xi\right)^2 n^2}
            \ge \frac{34}{25} - 20\xi. 
        \end{align*}

\makeatletter
\newsavebox\myboxA
\newsavebox\myboxB
\newlength\mylenA
\newcommand*\xoverline[2][0.75]{%
    \sbox{\myboxA}{$\m@th#2$}%
    \setbox\myboxB\null
    \ht\myboxB=\ht\myboxA%
    \dp\myboxB=\dp\myboxA%
    \wd\myboxB=#1\wd\myboxA
    \sbox\myboxB{$\m@th\overline{\copy\myboxB}$}
    \setlength\mylenA{\the\wd\myboxA}
    \addtolength\mylenA{-\the\wd\myboxB}%
    \ifdim\wd\myboxB<\wd\myboxA%
       \rlap{\hskip 0.5\mylenA\usebox\myboxB}{\usebox\myboxA}%
    \else
        \hskip -0.5\mylenA\rlap{\usebox\myboxA}{\hskip 0.5\mylenA\usebox\myboxB}%
    \fi}
\makeatother

        Therefore, the probability that $A_1$ does not occur is 
        \begin{align}\label{EQ:pro-u_1u_2-w1w2-not-Gv0}
            \mathbb{P}\left[ \xoverline{A_{1}} \right]
            & \le \mathbb{P}\left[u_1u_2 \not\in L_{v_{\ast}} \right]  + \mathbb{P}\left[w_1w_2 \not\in L_{v_{\ast}} \right]   \notag \\[0.5em]
            & = 2- \left(\mathbb{P}\left[u_1u_2 \in L_{v_{\ast}} \right] + \mathbb{P}\left[w_1w_2 \in L_{v_{\ast}} \right] \right) 
            \le 2- \frac{34}{25} + 20\xi
            \le \frac{2}{3}. 
        \end{align} 
        Next, we consider $A_2$.
        Applying Claim~\ref{CLAIM:near-full-degree} to $v_1$, we obtain 
        \begin{align}\label{EQ:Pij-not-Gvj}
            \mathbb{P}\left[ \xoverline{A_{2}} \right]
            & \le \mathbb{P}\left[u_1w_2 \not\in L_{v_1}[V_1, V_2]\right] + \mathbb{P}\left[u_2w_1 \not\in L_{v_1}[V_1]\right] \notag \\
            & \le \frac{5\xi n^2}{\left(|V_1|-2\right)\left(|V_2|-2\right)} + \frac{15\xi n^2}{\binom{|V_1|-2}{2}} \notag \\
            & \le \frac{5\xi n^2}{\left(\left(1/2 - 2\xi\right)n -2 \right)^2} + \frac{30 \xi n^2}{\left(\left(1/2 - 2\xi\right)n -3 \right)^2}
            \le \frac{1}{100}. 
        \end{align}

        Similarly, we have 
        \begin{align}\label{EQ:Qij-not-Gvj}
            \mathbb{P}\left[ \xoverline{A_{3}} \right]
            \le \frac{1}{100}.
        \end{align}
        Combining~\eqref{EQ:pro-u_1u_2-w1w2-not-Gv0},~\eqref{EQ:Pij-not-Gvj}, and~\eqref{EQ:Qij-not-Gvj}, we see that with probability at least $1 - \frac{2}{3} - \frac{1}{100} - \frac{1}{100} > 0$, all events $A_1, A_2, A_3$ occur simultaneously. 
        This implies that $\mathbb{F} \subseteq \mathcal{H}$, a contradiction. 
    \end{proof}

    It follows from Claim~\ref{CLAIM:H-is-bipartite} that $\mathcal{H} \in \mathfrak{B}$, which proves that $\mathbb{F}$ is $\ell_2$-vertex-extendable with respect to $\mathfrak{B}$. This completes the proof of Theorem~\ref{THM:Fano-vertex-extendable}.
\end{proof}

To complete the proof of Theorem~\ref{THM:AES-Fano-L2norm}, it remains to show that $\mathbb{B}_{n}$ is the unique extremal construction for $\mathbb{F}$ in the $\ell_{2}$-norm.

We begin by showing, in the following lemma, that among all $n$-vertex bipartite $3$-graphs, $\mathbb{B}_n$ uniquely attains the maximum $\ell_2$-norm. 
\begin{lemma}\label{LEMMA:Bn-max-2norm}
    Let $n \ge 3$ be an integer. 
    Suppose that $\mathcal{H}$ is an $n$-vertex bipartite $3$-graph. 
    Then $\norm{\mathcal{H}}_{2} \le \norm{\mathbb{B}_{n}}_{2}$, and equality holds if and only if $\mathcal{H} \cong \mathbb{B}_{n}$. 
\end{lemma}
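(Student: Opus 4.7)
The plan is to decompose the proof into two independent optimizations: first, fix a bipartition of $V(\mathcal{H})$ and show that the complete bipartite $3$-graph on that bipartition is the unique maximizer of the $\ell_2$-norm; then, optimize over choices of the bipartition.

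\emph{Step 1.} Fix any partition $V_1 \cup V_2 = V(\mathcal{H})$ witnessing bipartiteness. I would show that adding any missing edge $e \in \binom{V(\mathcal{H})}{3}$ that meets both parts strictly increases the $\ell_2$-norm, via the direct computation
\begin{align*}
    \norm{\mathcal{H} \cup \{e\}}_2 - \norm{\mathcal{H}}_2
    = \sum_{f \in \binom{e}{2}} \bigl[(d_{\mathcal{H}}(f)+1)^2 - d_{\mathcal{H}}(f)^2\bigr]
    = \sum_{f \in \binom{e}{2}} \bigl(2 d_{\mathcal{H}}(f) + 1\bigr)
    \ge 3.
\end{align*}
Consequently, among $n$-vertex bipartite $3$-graphs having bipartition $V_1 \cup V_2$, the $\ell_2$-norm is uniquely maximized by the complete bipartite $3$-graph $K^{(3)}[V_1, V_2]$.

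\emph{Step 2.} Let $a \coloneqq |V_1|$ and $b \coloneqq n - a$. A direct expansion yields
\begin{align*}
    \norm{K^{(3)}[a,b]}_2
    = \binom{a}{2} b^2 + \binom{b}{2} a^2 + ab(n-2)^2
    = (ab)^2 + ab \left[(n-2)^2 - \tfrac{n}{2}\right].
\end{align*}
Setting $t \coloneqq ab$ and $c \coloneqq (n-2)^2 - n/2$, a short check gives $c \ge 2$ for every $n \ge 4$, so the function $t \mapsto t^2 + ct$ is strictly increasing on $t \ge 0$. Since $t = a(n-a)$ is maximized over integers $a \in \{1,\ldots,n-1\}$ precisely at $a \in \{\lfloor n/2 \rfloor, \lceil n/2 \rceil\}$, both of which yield $3$-graphs isomorphic to $\mathbb{B}_n$, the maximum is uniquely attained at $\mathbb{B}_n$. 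The case $n = 3$ is immediate, since the only non-empty bipartite $3$-graph on three vertices is $\mathbb{B}_3$ itself.

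I do not anticipate any serious obstacle: the whole argument is elementary once Step 1 reduces the problem to optimizing a quadratic function in $ab$, and the sign of the linear coefficient $c$ is straightforward to verify for all $n \ge 4$. The only mild point of care is the uniqueness statement for odd $n$, where the two extremal choices $a = (n\pm 1)/2$ produce the same $3$-graph up to isomorphism, so the ``$\cong$'' in the conclusion is exactly the right notion.
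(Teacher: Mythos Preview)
Your proposal is correct and follows essentially the same approach as the paper: both reduce to the complete bipartite $3$-graph on a fixed partition, compute its $\ell_2$-norm as $(ab)^2 + ab\bigl[(n-2)^2 - n/2\bigr]$, and then maximize over $ab$. Your Step~1 makes the reduction to the complete bipartite case slightly more explicit via the edge-addition inequality, and you handle $n=3$ separately, but the substance is identical.
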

\begin{proof}[Proof of Lemma~\ref{LEMMA:Bn-max-2norm}]
    It follows from the definition of $\mathbb{B}_{n}$ that  
    \begin{align*}
        \norm{\mathbb{B}_n}_{2} 
        & = \left\lceil \frac{n}{2} \right \rceil \left\lfloor \frac{n}{2} \right \rfloor (n-2)^2 + \binom{\left\lceil \frac{n}{2} \right \rceil}{2} {\left\lfloor \frac{n}{2} \right \rfloor}^2 + \binom{\left\lfloor \frac{n}{2} \right \rfloor}{2} {\left\lceil \frac{n}{2} \right \rceil}^2 \\
        &= \left\lfloor\frac{n^2}{4}\right \rfloor(n-2)^2 + \left\lfloor\frac{n^2}{4}\right \rfloor ^2 -\frac{n}{2} \left\lfloor\frac{n^2}{4}\right \rfloor.
    \end{align*} 
    Suppose that $\mathcal{H}$ is a bipartite $3$-graph on $n$ vertices and $V_{1} \cup V_{2} = V(\mathcal{H})$ is the corresponding bipartition. 
    Then 
    \begin{align*}
        \norm{\mathcal{H}}_{2}
        &\le |V_1||V_2|(n-2)^2 + \binom{|V_1|}{2} |V_2|^2 + \binom{|V_2|}{2}|V_1|^2 \\ 
        & = |V_1||V_2|\left( (n-2)^2-\frac{n}{2}\right) + |V_1|^2|V_2|^2 \\ 
        & \le \left\lfloor\frac{n^2}{4}\right \rfloor \left( (n-2)^2-\frac{n}{2}\right) + \left\lfloor\frac{n^2}{4}\right \rfloor^2 
        = \norm{\mathbb{B}_n}_{2}.
    \end{align*}
    Notice that the first inequality above holds only if $\mathcal{H}$ is complete bipartite, and the second inequality above holds only if it is also balanced. This completes the proof of Lemma~\ref{LEMMA:Bn-max-2norm}
\end{proof}

\begin{proof}[Proof of Theorem~\ref{THM:AES-Fano-L2norm}]
    Let $\varepsilon > 0$ be a sufficiently small constant and let $n$ be sufficiently large. 
    Let $\mathcal{H}$ be an $n$-vertex $\mathbb{F}$-free $3$-graph with $\norm{\mathcal{H}}_{2} = \mathrm{ex}_{\ell_2}(n,\mathbb{F})$. 
    Similar to the proof of Theorem~\ref{THM:2-norm-density}, it follows from~\eqref{equ:THM-min-ell2-degree-extremal} that 
    \begin{align*}
        \delta_{\ell_{2}}(\mathcal{H})
        \ge \left(4 \pi_{\ell_2}(\mathbb{F}) - \varepsilon \right)n^3
        = \left(\frac{5}{4} - \varepsilon\right)n^3. 
    \end{align*}
    Since $\mathbb{F}$ is $\ell_2$-degree-stable with respect to $\mathfrak{B}$, by choosing $\varepsilon$ sufficiently small and $n$ sufficiently large, it follows from the definition that $\mathcal{H} \in \mathfrak{B}$, that is, $\mathcal{H}$ is bipartite. 
    Since $\mathcal{H}$ is extremal, Lemma~\ref{LEMMA:Bn-max-2norm} implies that $\mathcal{H} \cong \mathbb{B}_{n}$. 
    This completes the proof of Theorem~\ref{THM:AES-Fano-L2norm}.
\end{proof}

\section{Proof of Theorem~\ref{THM:AES-5-multigraph}}\label{SEC:Multigraphs}
In this section, we prove Theorem~\ref{THM:AES-5-multigraph}, an Andr\'{a}sfai--Erd\H{o}s--S\'{o}s--type stability theorem for the multigraph Tur\'{a}n problem $\mathrm{ex}_{5}(n,\mathbb{K}_{4})$.

\subsection{Preparations}
Suppose that $\vec{\mathcal{G}}=(G_1,\ldots,G_5)$ is a $5$-multigraph on four vertices, say $w, x, y, z$. 
We say $\vec{\mathcal{G}}$ is \textbf{saturated} if 
\begin{itemize}
    \item $C(wx) = C(yz) = C(wy) = C(xz) = [5]$, and 
    \item $C(wz) \cup C(xy) = [5]$ is a partition.
\end{itemize}
We call the pair $\{wz,xy\}$ the \textbf{light pair} in $\vec{\mathcal{G}}$. 
Denote by $\mathcal{G}^{*}$ the family of all saturated $5$-multigraphs on four vertices. 

The following lemma establishes some basic properties of $\mathbb{K}_{4}$-free $5$-multigraphs on four vertices. 
We note that some of these properties were previously proven by Bellmann--Reiher in~\cite{BR19}; however, we include their proofs here for completeness.
\begin{lemma}\label{LEMMA:f5n-basic-property}
    Let $\vec{\mathcal{G}}=(G_1,\ldots,G_5)$ be a $5$-multigraph on the vertex set $\{w,x,y,z\}$. 
    Suppose that $\vec{\mathcal{G}}$ is $\mathbb{K}_{4}$-free and $\mu(wx)+\mu(yz) \ge \mu(wy)+\mu(xz)\ge \mu(wz)+\mu(xy)$. 
    Then the following statements hold: 
    \begin{enumerate}[(i)]
    \item\label{ITEM:no-876} Suppose that $\mu(wx)+\mu(yz) \ge 8$ and $\mu(wy)+\mu(xz) \ge 7$. 
    Then $C(wz) \cap C(xy) = \emptyset$. 
    In particular,  $\mu(wz)+\mu(xy)\le 5$. 
    \item\label{ITEM:f_5(4)=25} We have $|\vec{\mathcal{G}}| \le 25$. 
    \item\label{ITEM:23-deleing-from-Ex} Suppose that $|\vec{\mathcal{G}}| \ge 23$. Then $\vec{\mathcal{G}}$ is a subgraph of some member of $\mathcal{G}^{*}$. 
    \item\label{ITEM:22->xy=5} Suppose that $|\vec{\mathcal{G}}| \ge 22$. Then $\vec{\mathcal{G}}$ contains an edge with multiplicity $5$. 
    \item\label{ITEM:17->no-same-graph} Suppose that $\mu(wx)+\mu(yz)+\mu(wy)+\mu(xz) \ge 17$. 
    Then $C(wz) \cap C(xy) = \emptyset$.  
\end{enumerate}
\end{lemma}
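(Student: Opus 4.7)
The unifying observation is that $\mathbb{K}_4$-freeness for a $5$-multigraph on four vertices can be read off from the three perfect matchings
\[
 M_1=\{wx,yz\},\quad M_2=\{wy,xz\},\quad M_3=\{wz,xy\}
\]
of $K_4$. For $p\in\{1,2,3\}$, write $M_p=\{e_p,e_p'\}$ and let $A_p\coloneqq C(e_p)\cap C(e_p')$ be the set of colors in which both edges of $M_p$ are present. By definition, $\vec{\mathcal{G}}$ contains a copy of $\mathbb{K}_4$ if and only if one can choose three distinct colors $c_1,c_2,c_3\in[5]$ with $c_p\in A_p$, i.e., the family $\{A_1,A_2,A_3\}$ admits a system of distinct representatives. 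So by Hall's theorem, $\vec{\mathcal{G}}$ is $\mathbb{K}_4$-free iff one of the following holds: some $A_p=\emptyset$; or $|A_p\cup A_q|\le 1$ for some $p\ne q$; or $|A_1\cup A_2\cup A_3|\le 2$. Set $\alpha\coloneqq\mu(wx)+\mu(yz)$, $\beta\coloneqq\mu(wy)+\mu(xz)$, $\gamma\coloneqq\mu(wz)+\mu(xy)$, so $\alpha\ge\beta\ge\gamma$ and $|\vec{\mathcal{G}}|=\alpha+\beta+\gamma$. Inclusion-exclusion gives the fundamental inequality $|A_p|\ge (\text{corresponding pair sum})-5$, together with the matching upper bound on the pair sum in terms of $|A_p|$.

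For \eqref{ITEM:no-876}, the hypotheses yield $|A_1|\ge\alpha-5\ge 3$ and $|A_2|\ge\beta-5\ge 2$. Then $A_1,A_2\ne\emptyset$ and $|A_1\cup A_2|\ge 3$, so the only surviving Hall failure is $A_3=\emptyset$, which is exactly $C(wz)\cap C(xy)=\emptyset$, and consequently $\gamma=|C(wz)|+|C(xy)|=|C(wz)\cup C(xy)|\le 5$. For \eqref{ITEM:f_5(4)=25}, split into three cases according to $(\alpha,\beta)$: if $\alpha\ge 8$ and $\beta\ge 7$, part~\eqref{ITEM:no-876} forces $\gamma\le 5$ and $|\vec{\mathcal{G}}|\le 10+10+5=25$; if $\alpha\ge 8$ and $\beta\le 6$, then $|\vec{\mathcal{G}}|\le 10+6+6=22$; and if $\alpha\le 7$, then $|\vec{\mathcal{G}}|\le 21$.

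For \eqref{ITEM:23-deleing-from-Ex}, the case analysis above shows $|\vec{\mathcal{G}}|\ge 23$ can only occur in the first case, so $\alpha\ge 8$, $\beta\ge 7$, and hence $C(wz)\cap C(xy)=\emptyset$ by \eqref{ITEM:no-876}. Define $\vec{\mathcal{G}}^*$ by upgrading the four heavy edges $wx,yz,wy,xz$ to color set $[5]$ and keeping $C(wz),C(xy)$ unchanged; since $C(wz)$ and $C(xy)$ are disjoint and $C(wz)\cup C(xy)$ can be completed to a partition of $[5]$, $\vec{\mathcal{G}}^*\in\mathcal{G}^*$. It is $\mathbb{K}_4$-free because its corresponding $A_3$-set remains empty, and clearly $\vec{\mathcal{G}}\subseteq\vec{\mathcal{G}}^*$. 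For \eqref{ITEM:22->xy=5}, suppose every edge has multiplicity at most $4$; then $\alpha,\beta,\gamma\le 8$, and $|\vec{\mathcal{G}}|\ge 22$ forces $\alpha=8$. If additionally $\beta\ge 7$, part~\eqref{ITEM:no-876} yields $\gamma\le 5$, hence $|\vec{\mathcal{G}}|\le 21$; if instead $\beta\le 6$, then $|\vec{\mathcal{G}}|\le 8+6+6=20$. Either way we contradict $|\vec{\mathcal{G}}|\ge 22$. Finally \eqref{ITEM:17->no-same-graph} is immediate: $\alpha+\beta\ge 17$ together with $\alpha\ge\beta$ and $\beta\le 10$ forces $\alpha\ge 9$ and $\beta\ge 7$, and applying \eqref{ITEM:no-876} gives the conclusion.

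The only nontrivial step is setting up the Hall-theoretic dictionary in the opening paragraph; once established, each part reduces to a short case analysis on $(\alpha,\beta,\gamma)$. I expect no real obstacle beyond book-keeping, but I will take care in \eqref{ITEM:23-deleing-from-Ex} to verify that the extension $\vec{\mathcal{G}}^*$ is indeed both saturated and $\mathbb{K}_4$-free.
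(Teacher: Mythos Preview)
Your proof is correct and takes essentially the same approach as the paper: both arguments rest on the inclusion--exclusion bound $|C(e_p)\cap C(e_p')|\ge (\text{pair sum})-5$ and then do a short case analysis on $(\alpha,\beta,\gamma)$. The paper picks the three distinct colors $i,j,k$ by hand (choosing $i\in A_3$, then $j\in A_2\setminus\{i\}$, then $k\in A_1\setminus\{i,j\}$), which is exactly your Hall/SDR argument specialized to three sets; your framing is a bit more systematic but the content is identical.
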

\begin{proof}[Proof of Lemma~\ref{LEMMA:f5n-basic-property}]
    First, we prove~\eqref{ITEM:no-876}.
    Suppose to the contrary that $\mu(wx)+\mu(yz) \ge 8$ and $\mu(wy)+\mu(xz) \ge 7$, but $C(wz) \cap C(xy) \neq \emptyset$. 
    Fix an index $i \in C(wz) \cap C(xy)$, noting that $\{wz, xy\} \subseteq G_i$. 
    Since 
    \begin{align*}
        |C(wy) \cap C(xz)| 
        & \ge \mu(wy)+\mu(xz) -5 \ge 7 -5 =2 \quad\text{and} \\
        |C(wx) \cap C(yz)|
        & \ge \mu(wx)+\mu(yz) - 5 \ge 8-5 =3, 
    \end{align*}
    there exists  $j \in \left(C(wy) \cap C(xz)\right) \setminus \{i\}$ and $k \in \left(C(wx) \cap C(yz)\right) \setminus \{i, j\}$.
    However, this means that $\vec{\mathcal{G}}$ contains a copy of $\mathbb{K}_{4}$, a contradiction. 

    Next, we prove~\eqref{ITEM:f_5(4)=25}. 
    Suppose to the contrary that $|\vec{\mathcal{G}}| \ge 26$. 
    Then we have 
    \begin{align*}
        \mu(wx)+\mu(yz)
        & \ge \lceil 26/3 \rceil 
        = 9, \\
        \mu(wy)+\mu(xz)
        & \ge \lceil \left(26 - \mu(wx) - \mu(yz)\right)/2 \rceil
        \ge \lceil (26-5-5)/2 \rceil 
        = 8, \quad\text{and}\quad \\
        \mu(wz)+\mu(xy)
        & \ge 26 - \mu(wx) - \mu(yz) - \mu(wy) - \mu(xz)
        \ge 26 - 5-5-5-5
        = 6, 
    \end{align*}
    which contradicts~\eqref{ITEM:no-876}. 

    Next, we prove~\eqref{ITEM:23-deleing-from-Ex}.  
    Similar to the proof of~\eqref{ITEM:f_5(4)=25}, it follows from $|\vec{\mathcal{G}}| \ge 23$ that 
    \begin{align*}
        \mu(wx)+\mu(yz) \ge 8 \quad\text{and}\quad
        \mu(wy)+\mu(xz) \ge 7. 
    \end{align*}
    Therefore, by~\eqref{ITEM:no-876}, we have $C(wz) \cap C(xy) = \emptyset$. 
    This implies that $\vec{\mathcal{G}}$ is a subgraph of some member of $\mathcal{G}^{*}$.
    
    Next, we prove~\eqref{ITEM:22->xy=5}.  It follows from $|\vec{\mathcal{G}}| \ge 22$ that $\mu(wx)+\mu(yz) \ge \lceil 22/3 \rceil =8$. 
    By~\eqref{ITEM:no-876}, either $\mu(wy)+\mu(xz) \le 6$ or $\mu(wy)+\mu(xz)\le 5$. 
    It follows that $\mu(wx)+\mu(yz)$ is at least 
    \begin{align*}
        & \min\left\{22 - (\mu(wy)+\mu(xz)) - (\mu(wy)+\mu(xz)),~\left\lceil \frac{22-(\mu(wy)+\mu(xz))}{2} \right\rceil\right\} \\
        & \ge \min\{22 - 6 - 6,~\lceil (22-5)/2 \rceil \} 
        = 9. 
    \end{align*}
    Therefore, at least one of $wx, yz$ has multiplicity $5$. 

    Finally, we prove~\eqref{ITEM:17->no-same-graph}. 
    It follows from $\mu(wx)+\mu(yz)+\mu(wy)+\mu(xz) \ge 17$ that 
    \begin{align*}
        \mu(wx)+\mu(yz) \ge \lceil 17/2 \rceil = 9 \quad\text{and}\quad
        \mu(wy)+\mu(xz) \ge 17 - 10 =7.
    \end{align*}
    So, by~\eqref{ITEM:no-876}, we have $C(wz) \cap C(xy) = \emptyset$. 
    This proves~\eqref{ITEM:17->no-same-graph}, and hence, completes the proof of Lemma~\ref{LEMMA:f5n-basic-property}. 
\end{proof}

Let $a, b, c$ be integers. 
We say a triple $\{x,y,z\} \subseteq V(\vec{\mathcal{G}})$ is of \textbf{$\{a,b,c\}$-type} in $\vec{\mathcal{G}}$ if 
\begin{align*}
    \mu(xy) = a, \quad 
    \mu(xz) = b, \quad\text{and}\quad 
    \mu(yz) = c. 
\end{align*}
\begin{lemma}\label{LEM:key-lemma-no-333-10-cases}
    Let $n \in \mathbb{N}$. 
    Suppose that $\vec{\mathcal{G}}=(G_1,\ldots,G_5)$ is a $\mathbb{K}_{4}$-free $5$-multigraph on $n$ vertices with $\delta(\vec{\mathcal{G}}) \ge 44n/13$.
    Then $\vec{\mathcal{G}}$ does not contain a triple of vertices of $\{i,j,k\}$-type with $\min\{i,j,k\} \ge 3$. 
\end{lemma}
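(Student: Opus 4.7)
The plan is a proof by contradiction via degree counting. Suppose that some triple $\{x,y,z\}$ has $a := \mu(xy)$, $b := \mu(xz)$, $c := \mu(yz)$ all at least $3$, and for each $w \notin \{x,y,z\}$ set $f(w) := \mu(wx) + \mu(wy) + \mu(wz)$. The hypothesis $\delta(\vec{\mathcal{G}}) \ge 44 n / 13$ gives
\[
\sum_{w \notin \{x,y,z\}} f(w) \;=\; d_{\vec{\mathcal{G}}}(x) + d_{\vec{\mathcal{G}}}(y) + d_{\vec{\mathcal{G}}}(z) - 2(a+b+c) \;\ge\; \tfrac{132 \, n}{13} - 2(a+b+c),
\]
so that the average of $f(w)$ exceeds $\tfrac{132}{13} > 10$ for large $n$. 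The strategy is to combine this lower bound with a sharp pointwise upper bound on $f(w)$.

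The pointwise upper bound is obtained by applying Lemma~\ref{LEMMA:f5n-basic-property} to each induced $4$-multigraph $\vec{\mathcal{G}}[\{w,x,y,z\}]$. Introducing the three perfect-matching sums $P_1 := a + \mu(wz)$, $P_2 := b + \mu(wy)$, $P_3 := c + \mu(wx)$, part~\eqref{ITEM:no-876} of Lemma~\ref{LEMMA:f5n-basic-property} forces that whenever the two largest among $\{P_1, P_2, P_3\}$ are at least $8$ and $7$, the smallest is at most $5$. A case enumeration over the sorted values of $(a,b,c) \in \{3,4,5\}^3$ (with $a, b, c \ge 3$) and over $(\mu(wx), \mu(wy), \mu(wz)) \in \{0,\dots,5\}^3$ yields $f(w) \le 10$ in every case where $\max(a,b,c) = 5$ together with $\min(a,b,c) \ge 4$, which immediately contradicts the averaging lower bound for large $n$. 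In the remaining cases, essentially $(a,b,c) \in \{(3,3,3), (4,3,3), (4,4,3), (4,4,4)\}$, the pointwise bound degrades to $f(w) \le 12$, and equality is achieved only by the very rigid configurations $(\mu(wx), \mu(wy), \mu(wz)) = (4,4,4)$ or a permutation of $(2,5,5)$.

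The main obstacle is therefore the slack in the hard case $(a,b,c) = (3,3,3)$, where the pointwise bound $12$ exceeds the required average $132/13 \approx 10.15$ by less than $2$. To close this gap, I would exploit the rigidity of the extremal configurations: a $(4,4,4)$-type vertex $w$ forces a distinguished $2$-subset $S_w \subseteq C(xy) \cap C(xz) \cap C(yz)$ with $S_w \subseteq C(wx) \cap C(wy) \cap C(wz)$, while a $(2,5,5)$-type $w$ (or a permutation of it) has its three link color sets at $w$ uniquely determined by $C(xy), C(xz), C(yz)$. For any two such heavy vertices $w, w'$, applying $\mathbb{K}_4$-freeness to the further $4$-multigraphs $\vec{\mathcal{G}}[\{w, w', x, y\}]$, $\vec{\mathcal{G}}[\{w, w', x, z\}]$, and $\vec{\mathcal{G}}[\{w, w', y, z\}]$ severely constrains $C(ww')$, typically forcing $\mu(ww') \le 2$ and often $\mu(ww') = 0$. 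Feeding these pairwise restrictions back into the minimum-degree hypothesis $d_{\vec{\mathcal{G}}}(w) \ge 44n/13$ applied to each heavy $w$ bounds the number of heavy vertices strictly below $n/13$, which suffices to pull the effective average of $f(w)$ strictly below $132/13$ and complete the contradiction. The most delicate bookkeeping arises when multiple extremal types coexist (the $(4,4,4)$ type together with various permutations of $(2,5,5)$), as the cross-type $\mathbb{K}_4$-freeness constraints must all be verified simultaneously; this coexistence analysis is the technical heart of the argument.
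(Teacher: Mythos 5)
Your setup (averaging $f(w)=\mu(wx)+\mu(wy)+\mu(wz)$ against a pointwise bound extracted from Lemma~\ref{LEMMA:f5n-basic-property}) is a reasonable frame, but there are two genuine gaps. First, the claimed pointwise bound $f(w)\le 10$ for the types with $\max(a,b,c)=5$ and $\min(a,b,c)\ge 4$ is false except for $(5,5,5)$: for $(a,b,c)=(5,5,4)$ the four-vertex constraints of Lemma~\ref{LEMMA:f5n-basic-property} only give $f(w)\le 25-14=11$, and for $(5,4,4)$ the configuration $\mu(wx)=\mu(wz)=5$, $\mu(wy)=1$ realizes $f(w)=11$ inside a subgraph of a saturated $5$-multigraph. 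Since $11>132/13$, averaging does not dispose of these cases; one must instead observe that $f(w)=11$ forces two $w$-edges of multiplicity $5$ and hence produces a triple of an already-excluded type. This is exactly why the paper processes the ten types in a fixed order, each case invoking the exclusion of the earlier ones, and why even the $(5,5,4)$ case needs a partition of $V\setminus\{x,y,z\}$ by the value of $f(w)$, a multiplicity bound $\mu(wx)\le 1$ on the heavy part, and an \emph{asymmetrically weighted} degree sum $3d(x)+2d(y)+2d(z)$ rather than the uniform average.

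Second, for the hard types (the paper's Cases 5--10, and your $(3,3,3)$ bottleneck) your argument is a plan rather than a proof: the assertion that $\mathbb{K}_4$-freeness on $\vec{\mathcal{G}}[\{w,w',x,y\}]$ etc.\ bounds the number of heavy vertices strictly below $n/13$ is never established, and it is not clear it can be by the route you indicate, because the threshold $44/13$ leaves essentially no slack (the paper's decisive inequalities, e.g.\ $7d(x)+3d(y)+3d(z)\le 44n-14$ against $13\delta(\vec{\mathcal{G}})\ge 44n$, hold by margins of $O(1)$). The paper avoids a global count of heavy vertices altogether: for each remaining type it either runs a weighted degree sum with coefficients tuned to multiplicity bounds proved on the heavy classes (Cases 5 and 6), or shows that any single vertex $w$ with $f(w)\ge 11$ already creates a triple of a previously excluded type (Cases 3, 4, 7--10). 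You would need to either supply the missing quantitative heavy-vertex bound with full bookkeeping, or restructure along the paper's sequential-elimination lines.
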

\begin{proof}[Proof of Lemma~\ref{LEM:key-lemma-no-333-10-cases}]
    Suppose to the contrary that the lemma fails. 
    Fix an arbitrary triple $\{x,y,z\} \subseteq V(\vec{\mathcal{G}})$ of $\{i,j,k\}$-type with $\min\{i,j,k\} \ge 3$. 
    By symmetry, we may assume that 
    \begin{align*}
        \big(\mu(xy),~\mu(xz),~\mu(yz) \big) = (i,j,k)
        \quad\text{and}\quad 
        i \ge j \ge k. 
    \end{align*}
    Let $V \coloneqq V(\vec{\mathcal{G}})$. For every subset $U \subseteq V$, define $\overline{U} \coloneqq V\setminus U$. 
    It is clear that 
    \begin{align}\label{EQ:sum-le-2eU+}
        \sum_{u \in U}d_{\vec{\mathcal{G}}}(u) 
        = 2|\vec{\mathcal{G}}[U]|
            +\sum_{u \in U}\sum_{v \in \overline{U}} \mu(uv). 
    \end{align} 

    \begin{claim}\label{CLAIM:exist-w-sum-11}
        There exists a vertex $w\in V\setminus \{x,y,z\}$ such that $\mu(wx)+\mu(wy)+\mu(wz) \ge 11$. 
    \end{claim}
    \begin{proof}[Proof of Claim~\ref{CLAIM:exist-w-sum-11}]
        Applying~\eqref{EQ:sum-le-2eU+} to the set $\{x,y,z\}$, we obtain 
        \begin{align*}
            d_{\vec{\mathcal{G}}}(x)+d_{\vec{\mathcal{G}}}(y)+d_{\vec{\mathcal{G}}}(z)  
            & = 2 |\vec{\mathcal{G}}[\{x,y,z\}]| + \sum_{w\in V \setminus \{x,y,z\}}\left(\mu(wx)+\mu(wy)+\mu(wz) \right). 
        \end{align*}
        Since $|\vec{\mathcal{G}}[\{x,y,z\}]| \le 3 \cdot 5 = 15$ and  $\delta(\vec{\mathcal{G}}) \ge 44n/13$, it follows from the inequality above that  
        \begin{align*}
            \sum_{w\in V\setminus \{x,y,z\}}\left(\mu(wx)+\mu(wy)+\mu(wz) \right) 
            \ge 3\delta(\vec{\mathcal{G}}) - 30 
            \ge 3 \cdot \frac{44}{13}n -30 
            > 10(n-3). 
        \end{align*}
        By averaging, there exists a vertex $w\in V\setminus \{x,y,z\}$ such that 
        \begin{align*}
            \mu(wx)+\mu(wy)+\mu(wz) 
            \ge 11,
        \end{align*}
        which proves Claim~\ref{CLAIM:exist-w-sum-11}.
    \end{proof}

    Next, we consider the following $10$ possible cases for the triple $(i,j,k)$ and show that none of them can occur: 
    \begin{align*}
        (5,5,5),~(5,5,4),~(5,4,4),~(4,4,4),~(5,5,3),\\
        (5,4,3),~(5,3,3),~(4,4,3),~(4,3,3),~(3,3,3).
    \end{align*}
    It is worth noting that, since $\{x,y,z\}$ was chosen arbitrarily, whenever we show that $(i,j,k) \neq (a,b,c)$ for some $(a,b,c)$ in the list above, it implies that the set $V$ does not contain any triple of $\{a,b,c\}$-type. 

    \bigskip 
    
    \textbf{Case 1}: $(i,j,k) = (5,5,5)$. 
    
    Let $w$ be the vertex guaranteed by Claim~\ref{CLAIM:exist-w-sum-11}, and let $S \coloneqq \{w, x, y, z\}$. Then we have  
    \begin{align*}
        |\vec{\mathcal{G}}[S]|
        = \mu(xy) + \mu(xz) + \mu(yz) + \mu(wx)+\mu(wy)+\mu(wz)
        \ge 5+5+5+11 
        = 26,   
    \end{align*}
    which contradicts Lemma~\ref{LEMMA:f5n-basic-property}~\eqref{ITEM:f_5(4)=25}. 
    Therefore, $(i,j,k) \neq (5,5,5)$. 

    \bigskip 
    
    \textbf{Case 2}: $(i,j,k) = (5,5,4)$.
    
    For every vertex $w\in V \setminus \{x,y,z\}$, it follows from Lemma~\ref{LEMMA:f5n-basic-property}~\eqref{ITEM:f_5(4)=25} that 
    \begin{align}\label{equ:wx-wy-wz-upper-bound-11}
        \mu(wx)+\mu(wy)+\mu(wz)
        = |\vec{\mathcal{G}}[\{w,x,y,z\}]| - \mu(xy) - \mu(xz) - \mu(yz)
        \le 25 - 14 
        = 11.
    \end{align}
    Consider the following partition of $V\setminus\{x,y,z\}$:
    \begin{align*}
        U_1 
        & \coloneqq  \left\{w \in V \setminus \{x,y,z\} \colon \mu(wx)+\mu(wy)+\mu(wz) \le 9 \right\} \quad\text{and} \\
        U_2 
        & \coloneqq  \left\{w \in V \setminus \{x,y,z\} \colon \mu(wx)+\mu(wy)+\mu(wz) \ge 10 \right\}.
    \end{align*}
    Applying~\eqref{EQ:sum-le-2eU+} to the set $\{x,y,z\}$, and using~\eqref{equ:wx-wy-wz-upper-bound-11}, we obtain 
    \begin{align}\label{EQ:554-9U1+11U2}
        d_{\vec{\mathcal{G}}}(x)+d_{\vec{\mathcal{G}}}(y)+d_{\vec{\mathcal{G}}}(z) 
        & = 2 \left(\mu(xy) + \mu(xz) + \mu(yz)\right) + \sum_{w\in U_1 \cup U_2} \left(\mu(wx)+\mu(wy)+\mu(wz)\right) \notag \\ 
        & \le 28 + 9|U_1| + 11|U_2|. 
    \end{align}

    \begin{claim}\label{CLAIM:mu-wx-1}
        We have $\mu(wx) \le 1$ for every $w \in U_2$. 
    \end{claim}
    \begin{proof}[Proof of Claim~\ref{CLAIM:mu-wx-1}]
        Fix a vertex $w \in U_2$ and let $S \coloneqq \{w,x,y,z\}$. 
        Suppose to the contrary that $\mu(wx) \ge 2$. 
        It follows from the definition of $U_2$ that $|\vec{\mathcal{G}}[S]|\ge 5+5+4+10 = 24$. 
        So by Lemma~\ref{LEMMA:f5n-basic-property}~\eqref{ITEM:23-deleing-from-Ex}, $\vec{\mathcal{G}}[S]$ is subgraph of some member $\vec{\mathcal{G}}'$ of $\mathcal{G}^{*}$. 
        
        Since $\mu(wx) + \mu(yz) \ge 2 + 4 = 6$, by the definition of saturated $5$-multigraphs, the pair $\{wx, yz\}$ cannot be the light pair of $\vec{\mathcal{G}}'$. 
        Thus, we have $\mu_{\vec{\mathcal{G}}'}(wx) = \mu_{\vec{\mathcal{G}}'}(yz) = 5$. 
        Since $\mu_{\vec{\mathcal{G}}'}(yz) - \mu_{\vec{\mathcal{G}}}(yz) = 1 = |\vec{\mathcal{G}}'| - |\vec{\mathcal{G}}[S]|$, we have $\mu_{\vec{\mathcal{G}}}(wx) = \mu_{\vec{\mathcal{G}}'}(wx) = 5$. 
        For similar reasons, we have $\mu(wy) \in \{0,5\}$ and $\mu(wz) \in \{0,5\}$. 
        Since $\mu(wy) + \mu(wz) \ge 10 - \mu(wx) =5$, either $\mu(wy) = 5$ or $\mu(wz) = 5$.
        By symmetry, we may assume that $\mu(wy) = 5$. 
        However, this means that $\{w,x,y\}$ is a triple of $\{5,5,5\}$-type, which is impossible by Case 1.
        This completes the proof of Claim~\ref{CLAIM:mu-wx-1}. 
    \end{proof}

    It follows from Claim~\ref{CLAIM:mu-wx-1} that 
    \begin{align}\label{EQ:554-5U1+U2}
        d_{\vec{\mathcal{G}}}(x) 
        = \mu(xy) + \mu(xz) + \sum_{w\in U_1 \cup U_2} \mu(xw)
        \le 10 + 5|U_1| + |U_2|. 
    \end{align}
    Adding twice Inequality~\eqref{EQ:554-9U1+11U2} to Inequality~\eqref{EQ:554-5U1+U2}  yields 
    \begin{align*}
        3d_{\vec{\mathcal{G}}}(x)+2d_{\vec{\mathcal{G}}}(y)+2d_{\vec{\mathcal{G}}}(z) 
        \le 66 + 23|U_1|+23|U_2|=23n-3, 
    \end{align*}
    which contradicts the lower bound 
    \begin{align*}
        3d_{\vec{\mathcal{G}}}(x)+2d_{\vec{\mathcal{G}}}(y)+2d_{\vec{\mathcal{G}}}(z) 
        \ge 7\delta(\vec{\mathcal{G}})
        \ge 7 \cdot \frac{44}{13}n
        >23n-3. 
    \end{align*}
    Therefore, $(i,j,k) \neq (5,5,4)$. 
    
    \bigskip

    \textbf{Case 3}: $(i,j,k) = (5,4,4)$.
 
    Let $w$ be the vertex guaranteed by Claim~\ref{CLAIM:exist-w-sum-11} and let $S\coloneqq \{w,x,y,z\}$. 
    Observe that 
    \begin{align*}
        |\vec{\mathcal{G}}[S]|
        = \mu(xy) + \mu(xz) + \mu(yz) + \mu(wx)+\mu(wy)+\mu(wz)
        \ge 5+4+4+11 
        = 24.   
    \end{align*}
    As in the proof of Case 2, Lemma~\ref{LEMMA:f5n-basic-property}~\eqref{ITEM:23-deleing-from-Ex} implies that $\mu(wz) = 5$, and either $\mu(wx) = 5$ or $\mu(wy) = 5$. 
    If $\mu(wx) = 5$, then the triple $\{w,x,z\}$ is of $\{5,5,4\}$-type, which is impossible by Case 2. 
    If $\mu(wy) = 5$, then the triple $\{w,y,z\}$ is of $\{5,5,4\}$-type, which is impossible by Case 2. 
    Therefore, $(i,j,k) \neq (5,4,4)$.

    \bigskip

    \textbf{Case 4}: $(i,j,k) = (4,4,4)$.

    Let $w$ be the vertex guaranteed by Claim~\ref{CLAIM:exist-w-sum-11} and let $S\coloneqq \{w,x,y,z\}$. 
    Observe that 
    \begin{align*}
        |\vec{\mathcal{G}}[S]|
        = \mu(xy) + \mu(xz) + \mu(yz) + \mu(wx)+\mu(wy)+\mu(wz)
        \ge 4+4+4+11 
        = 23.   
    \end{align*}
    As in the proof of Case 2, Lemma~\ref{LEMMA:f5n-basic-property}~\eqref{ITEM:23-deleing-from-Ex} implies that two edges in $\{wx, wy, wz\}$ have multiplicity $5$.
    By symmetry, we may assume that $\mu(wx) = \mu(wy) = 5$.
    Then the triple $\{w,x,y\}$ is of $\{5,5,4\}$-type, which is impossible by Case 2.
    Therefore, $(i,j,k) \neq (4,4,4)$.

    \bigskip

    \textbf{Case 5}: $(i,j,k) = (5,5,3)$.

    We follow a similar proof as in the case where $(i,j,k) = (5,5,4)$. 
    For every vertex $w\in V \setminus \{x,y,z\}$, it follows from Lemma~\ref{LEMMA:f5n-basic-property}~\eqref{ITEM:f_5(4)=25} that 
    \begin{align}\label{equ:wx-wy-wz-upper-bound-12}
        \mu(wx)+\mu(wy)+\mu(wz)
        = |\vec{\mathcal{G}}[\{w,x,y,z\}]| - \left(\mu(xy) + \mu(xz) + \mu(yz)\right)
        \le 25 - 13 
        = 12.
    \end{align}
    Consider the following partition of $V\setminus\{x,y,z\}$: 
    \begin{align*}
        U_1 
        & \coloneqq \left\{w \in V \setminus \{x,y,z\} \colon  \mu(wx)+\mu(wy)+\mu(wz) \le 8 \right\},\\
        U_2 
        & \coloneqq \left\{w \in V \setminus \{x,y,z\} \colon \mu(wx)+\mu(wy)+\mu(wz) = 9 \right\}, \quad\text{and} \\
        U_3 
        & \coloneqq \left\{w \in V \setminus \{x,y,z\} \colon \mu(wx)+\mu(wy)+\mu(wz) \ge 10 \right\}.
    \end{align*}
    Applying~\eqref{EQ:sum-le-2eU+} to the set $\{x,y,z\}$, and using~\eqref{equ:wx-wy-wz-upper-bound-12},  we obtain 
    \begin{align}\label{EQ:553-8U3+9U4+12U5}
        d_{\vec{\mathcal{G}}}(x)+d_{\vec{\mathcal{G}}}(y)+d_{\vec{\mathcal{G}}}(z) 
        & = 2 |\vec{\mathcal{G}}[\{x,y,z\}]| + \sum_{w\in U_1 \cup U_2 \cup U_3} \left(\mu(wx)+\mu(wy)+\mu(wz)\right) \notag \\ 
        & \le 26 + 8|U_1| + 9|U_2| + 12|U_3|. 
    \end{align}

    \begin{claim}\label{CLAIM:mu-wx-3-U4}
        We have $\mu(wx) \le 3$ for every $w \in U_2$. 
    \end{claim}
    \begin{proof}[Proof of Claim~\ref{CLAIM:mu-wx-3-U4}]
        Fix a vertex $w \in U_2$. 
        Suppose to the contrary that $\mu(wx) \ge 4$. 
        By symmetry, we may assume that $\mu(wy) \ge \mu(wz)$. 

        Suppose that $\mu(wx)=4$. 
        Then $\mu(wy)+\mu(wz) = 9 - \mu(wx) = 5$. 
        It implies that $\left(\mu(wy), \mu(wz)\right) = (3,2)$, since otherwise, the triple $\{y,x,w\}$ is either of $\{5,5,4\}$-type (if $\left(\mu(wy), \mu(wz)\right) = (5,0)$) or of $\{5,4,4\}$-type (if $\left(\mu(wy), \mu(wz)\right) = (4,1)$). 
        Both are impossible by Case 2 and Case 3, respectively. 

        Now we know the multiplicity of every pair in $\{w,x,y,z\}$.
        In particular, we have 
        \begin{align*}
            \mu(xz)+\mu(wy) = 5 + 3 = 8, \quad 
            \mu(xy)+\mu(wz) = 5 + 2 = 7, \quad 
            \mu(wx)+\mu(yz) = 4 + 3 =7,
        \end{align*}
        which contradicts Lemma~\ref{LEMMA:f5n-basic-property}~\eqref{ITEM:no-876}.
        Therefore, $\mu(wx) \neq 4$. 

        Now suppose that $\mu(wx) = 5$. 
        It is easy to see that $\mu(wy) \le 3$, since otherwise, the triple $\{x,y,w\}$ is of $\{5,5,4\}$-type, which is impossible by Case 2. 
        
        Suppose that $\left(\mu(wy), \mu(wz)\right) = (3,1)$. 
        Then 
        \begin{align*}
            \mu(xz)+\mu(wy) = 5 + 3 = 8, \quad 
            \mu(xy)+\mu(wz) = 5 + 1 = 6, \quad 
            \mu(wx)+\mu(yz) = 5 + 3 =8,
        \end{align*}
        which contradicts Lemma~\ref{LEMMA:f5n-basic-property}~\eqref{ITEM:no-876}.

        Suppose that $\left(\mu(wy), \mu(wz)\right) = (2,2)$. 
        Then 
        \begin{align*}
            \mu(xz)+\mu(wy) = 5 + 2 = 7, \quad 
            \mu(xy)+\mu(wz) = 5 + 2 = 7, \quad 
            \mu(wx)+\mu(yz) = 5 + 3 = 8,
        \end{align*}
        which also contradicts Lemma~\ref{LEMMA:f5n-basic-property}~\eqref{ITEM:no-876}.
        This completes the proof of Claim~\ref{CLAIM:mu-wx-3-U4}. 
    \end{proof}

    \begin{claim}\label{CLAIM:mu-wx-2-U5}
        We have $\mu(wx) \le 2$ for every $w \in U_3$. 
    \end{claim}
    \begin{proof}[Proof of Claim~\ref{CLAIM:mu-wx-2-U5}]
        Fix a vertex $w \in U_3$ and let $S \coloneqq \{w,x,y,z\}$. 
        Observe that 
        \begin{align*}
            |\vec{\mathcal{G}}[S]|
            = \mu(xy) + \mu(xz) + \mu(yz) + \mu(wx)+\mu(wy)+\mu(wz)
            \ge 5+5+3+10
            = 23.
        \end{align*}
        So by Lemma~\ref{LEMMA:f5n-basic-property}~\eqref{ITEM:23-deleing-from-Ex}, $\vec{\mathcal{G}}[S]$ is a subgraph of some member $\vec{\mathcal{G}}'$ of $\mathcal{G}^{\ast}$. 

        Suppose that $\{wy, xz\}$ is the light pair in $\vec{\mathcal{G}}'$. Then $\mu(wy) = 5 - \mu(xz) = 0$. 
        It follows that $\mu(wx) = \mu(wz) = 5$, which implies that the triple $\{w,x,z\}$ is of $\{5,5,5\}$-type, which is impossible by Case 1. 
        Therefore, $\{wy, xz\}$ cannot be the light pair. 
        By symmetry, $\{wz, xy\}$ cannot be the light pair either.
        Thus $\{wx, yz\}$ must be the light pair in $\vec{\mathcal{G}}'$. 
        It follows that $\mu(wx) \le 5 - \mu(yz) = 2$. 
        This completes the proof of Claim~\ref{CLAIM:mu-wx-2-U5}.
    \end{proof}

    It follows from Claims~\ref{CLAIM:mu-wx-3-U4} and~\ref{CLAIM:mu-wx-2-U5} that 
        \begin{align}\label{EQ:553-5U3+3U4+2U5}
            d_{\vec{\mathcal{G}}}(x) 
            = \mu(xy) + \mu(xz) + \sum_{w\in U_1\cup U_2\cup U_3} \mu(xw)
            \le 10 + 5|U_1| + 3|U_2| + 2|U_3|,
        \end{align}
    Taking $3\times\eqref{EQ:553-8U3+9U4+12U5} + 4\times \eqref{EQ:553-5U3+3U4+2U5}$, we obtain 
    \begin{align*}
        7d_{\vec{\mathcal{G}}}(x)+3d_{\vec{\mathcal{G}}}(y)+3d_{\vec{\mathcal{G}}}(z) 
        & \le 118+44|U_1|+39|U_2|+44|U_3| \\
        & \le 118+44(n-3)
        = 44n - 14,
    \end{align*}
    which contradicts the lower bound 
    \begin{align*}
        7d_{\vec{\mathcal{G}}}(x)+3d_{\vec{\mathcal{G}}}(y)+3d_{\vec{\mathcal{G}}}(z) 
        \ge 13 \delta(\vec{\mathcal{G}}) 
        \ge 13 \cdot \frac{44}{13}n
        > 44n-14. 
    \end{align*}
    Therefore, $(i,j,k) \neq (5,5,3)$. 

    \bigskip

    \textbf{Case 6}: $(i,j,k) = (5,4,3)$. 

    \begin{claim}\label{CLAIM:case-543-wx-wy-wz-11}
        We have $\mu(wx)+\mu(wy)+\mu(wz) \le 11$ for every $w \in V \setminus \{x,y,z\}$. 
    \end{claim}
    \begin{proof}[Proof of Claim~\ref{CLAIM:case-543-wx-wy-wz-11}]
        Suppose to the contrary that $\mu(wx)+\mu(wy)+\mu(wz) \ge 12$ for some $w \in V \setminus \{x,y,z\}$.
        Let $S \coloneqq \{w,x,y,z\}$. 
        Then 
        \begin{align*}
            |\vec{\mathcal{G}}[S]|
            = \mu(xy) + \mu(xz) + \mu(yz) + \mu(wx)+\mu(wy)+\mu(wz)
            \ge 5+4+3+12
            = 24. 
        \end{align*}
        So by Lemma~\ref{LEMMA:f5n-basic-property}~\eqref{ITEM:23-deleing-from-Ex}, $\vec{\mathcal{G}}[S]$ is a subgraph of some member $\vec{\mathcal{G}}'$ of $\mathcal{G}^{*}$.
        Since $|\vec{\mathcal{G}}[S]| \ge 24 = 25-1$, the light pair in $\vec{\mathcal{G}}'$ can only be $\{yz, wx\}$, and moreover, $\mu(wx) = 5-\mu(yz) = 2$. 
        Consequently, $\mu(wy) = \mu(wz) = 5$, and hence the triple $\{w,y,z\}$ is of $\{5,5,3\}$-type, which is impossible by Case 5.
        This completes the proof of Claim~\ref{CLAIM:case-543-wx-wy-wz-11}. 
    \end{proof}

    Consider the following partition of $V\setminus \{x,y,z\}$: 
    \begin{align*}
        U_1 
        & \coloneqq \left\{w \in V \setminus \{x,y,z\} \colon  \mu(wx)+\mu(wy)+\mu(wz) \le 9 \right\},\\
        U_2 
        & \coloneqq \left\{w \in V \setminus \{x,y,z\} \colon \mu(wx)+\mu(wy)+\mu(wz) = 10 \right\},\quad\text{and}\\
        U_3 
        & \coloneqq \left\{w \in V \setminus \{x,y,z\} \colon \mu(wx)+\mu(wy)+\mu(wz) =11 \right\}.
    \end{align*}
    Applying~\eqref{EQ:sum-le-2eU+} to the set $\{x,y,z\}$ yields
    \begin{align}\label{EQ:543-9U6+10U7+11U8}
        d_{\vec{\mathcal{G}}}(x)+d_{\vec{\mathcal{G}}}(y)+d_{\vec{\mathcal{G}}}(z)
        & = 2|\vec{\mathcal{G}}[\{x,y,z\}]| + \sum_{w\in U_1\cup U_2 \cup U_3} \mu(wx)  \notag \\
        & \le 24 + 9|U_1|+10|U_2|+11|U_3|. 
    \end{align}

    \begin{claim}\label{CLAIM:543-mu-wx-U7}
        We have $\mu(wx) \le 3$ for every $w \in U_2$. 
    \end{claim}
    \begin{proof}[Proof of Claim~\ref{CLAIM:543-mu-wx-U7}]
        Suppose to the contrary that $\mu(wx) \ge 4$ for some vertex $w \in U_2$. 

        Suppose that $\mu(wx) = 4$. 
        Since we have shown that $V$ contains no triples of type $\{5,5,4\}$, $\{5,4,4\}$, or $\{4,4,4\}$,  it follows that $\mu(wy) \le 3$ and $\mu(wz) \le 3$. 
        On the other hand, we have $\mu(wy) + \mu(wz) = 10 - \mu(wx) = 6$, so it must be that $\mu(wy) = \mu(wz) = 3$. 
        This implies that 
        \begin{align*}
            \mu(xy) + \mu(wz) = 8, \quad
            \mu(xz) + \mu(wy) = 7, \quad 
            \mu(yz) + \mu(wx) = 7,
        \end{align*}
        which contradicts Lemma~\ref{LEMMA:f5n-basic-property}~\eqref{ITEM:no-876}.

        Suppose that $\mu(wx) = 5$. 
        As before, we have $\mu(wy) \le 3$ and $\mu(wz) \le 3$. 
        Since $\mu(wy) + \mu(wz) = 10 - \mu(wx) = 5$, there are only two possibilities: $\left(\mu(wy), \mu(wz)\right) = (3,2)$ or $\left(\mu(wy), \mu(wz)\right) = (2,3)$. 
        As in the argument above, it is straightforward to verify that both cases are ruled out by Lemma~\ref{LEMMA:f5n-basic-property}~\eqref{ITEM:no-876}. 
        This completes the proof of Claim~\ref{CLAIM:543-mu-wx-U7}. 
    \end{proof}

    \begin{claim}\label{CLAIM:543-mu-wx-U8}
        We have $\mu(wx) \le 2$ for every $w \in U_3$. 
    \end{claim}
    \begin{proof}[Proof of Claim~\ref{CLAIM:543-mu-wx-U8}]
        Suppose to the contrary that $\mu(wx) \ge 3$ for some vertex $w \in U_3$. 
        Let $S \coloneqq \{w,x,y,z\}$. 
        Observe that 
        \begin{align*}
            |\vec{\mathcal{G}}[S]|
            = \mu(xy) + \mu(xz) + \mu(yz) + \mu(wx)+\mu(wy)+\mu(wz)
            = 5+4+3+11
            = 23.
        \end{align*}
        By Lemma~\ref{LEMMA:f5n-basic-property}~\eqref{ITEM:23-deleing-from-Ex}, $\vec{\mathcal{G}}[S]$ is a subgraph of some member $\vec{\mathcal{G}}'$ of $\mathcal{G}^{\ast}$. 
        Since $\mu(wx) + \mu(yz) \ge 3+3 = 6$, the pair $\{wx,yz\}$ cannot be the light pair in $\vec{\mathcal{G}}'$.
        It follows that $\mu(wx) = 5$, since otherwise, we would have $|\vec{\mathcal{G}}[S]| \le 4+3 + 5+5 + 5 = 22$, a contradiction. 
        Note that $\mu(wz) = 11 - \mu(wx) - \mu(wy) \ge 11 - 5 -5 \ge 1$. Thus $\mu(wz) + \mu(xy) \ge 1+ 5 = 6$, which implies that the pair $\{wz,xy\}$ cannot be the light pair in $\vec{\mathcal{G}}'$.
        It follows that $\mu(wz) = 5$, since otherwise, we would have $|\vec{\mathcal{G}}[S]| \le 5+4 + 5+3 + 5 = 22$, a contradiction. 
        However, this means that the triple $\{w,x,z\}$ is of $\{5,5,4\}$-type, which is impossible by Case 2.
        This completes the proof of Claim~\ref{CLAIM:543-mu-wx-U8}. 
    \end{proof}
    
    It follows from Claims~\ref{CLAIM:543-mu-wx-U7} and~\ref{CLAIM:543-mu-wx-U8} that 
    \begin{align}\label{EQ:543-5U6+3U7+2U8}
        d_{\vec{\mathcal{G}}}(x) 
        = \mu(xy) + \mu(xz) + \sum_{w\in U_1\cup U_2\cup U_3} \mu(wx)
        \le 9 + 5|U_1| + 3|U_2|+2|U_3|. 
    \end{align}
    However, $3 \times \eqref{EQ:543-9U6+10U7+11U8} + 2\times \eqref{EQ:543-5U6+3U7+2U8}$ yields  
    \begin{align*}
        5d_{\vec{\mathcal{G}}}(x)+3d_{\vec{\mathcal{G}}}(y)+3d_{\vec{\mathcal{G}}}(z) 
        \le 90 + 37|U_1|+36|U_2|+37|U_3| 
        \le 90 + 37(n-3) = 37n-21, 
    \end{align*}
    which contradicts the lower bound 
    \begin{align*}
        5d_{\vec{\mathcal{G}}}(x)+3d_{\vec{\mathcal{G}}}(y)+3d_{\vec{\mathcal{G}}}(z) 
        \ge 11\delta(\vec{\mathcal{G}}) 
        \ge 11 \cdot \frac{44}{13} n
        >37n. 
    \end{align*}
    Therefore, $(i,j,k) \neq (5,4,3)$. 

    \bigskip

    \textbf{Case 7}: $(i,j,k) = (5,3,3)$. 

    Let $w$ be the vertex guaranteed by Claim~\ref{CLAIM:exist-w-sum-11} and let $S \coloneqq \{w,x,y,z\}$. 

    \begin{claim}\label{CASE:533-wx-wy-wz-11}
        We have $\mu(wx)+\mu(wy)+\mu(wz) = 11$. 
    \end{claim}
    \begin{proof}[Proof of Claim~\ref{CASE:533-wx-wy-wz-11}]
        Suppose to the contrary that $\mu(wx)+\mu(wy)+\mu(wz) \ge 12$. 
        Then 
        \begin{align*}
            |\vec{\mathcal{G}}[S]|
            = \mu(xy) + \mu(xz) + \mu(yz) + \mu(wx)+\mu(wy)+\mu(wz)
            \ge 5+3+3+12
            = 23. 
        \end{align*}
        By Lemma~\ref{LEMMA:f5n-basic-property}~\eqref{ITEM:23-deleing-from-Ex}, $\vec{\mathcal{G}}[S]$ is a subgraph of some member $\vec{\mathcal{G}}'$ of $\mathcal{G}^{\ast}$. 
        Since $\mu(xy) = 5$ and 
        \begin{align*}
            \min\{\mu(wx),\mu(wy),\mu(wz)\} 
            \ge 12 - 5-5
            = 2, 
        \end{align*}
        the pair $\{xy, wz\}$ cannot be the light pair in $\vec{\mathcal{G}}'$. 
        By symmetry, we may assume that $\{xz,wy\}$ is the light pair in $\vec{\mathcal{G}}'$. 
        Since $|\vec{\mathcal{G}}[S]| \ge 23 = 25-2$, it must hold that $\mu(wy) = 2$ and $\mu(wz) = \mu(wx) = 5$. 
        However, it implies that the triple $\{w,x,z\}$ is of $\{5,5,3\}$-type, which is impossible by Case 5.
        This completes the proof of Claim~\ref{CASE:533-wx-wy-wz-11}. 
    \end{proof}

    \begin{claim}\label{CASE:533-wx-wy-3}
        We have $\max\{\mu(wx), \mu(wy)\} \le 3$. 
    \end{claim}
    \begin{proof}[Proof of Claim~\ref{CASE:533-wx-wy-3}]
        Suppose to the contrary that $\max\{\mu(wx), \mu(wy)\} \ge 4$.
        By symmetry, we may assume that $\mu(wx) \ge 4$. 

        Suppose that $\mu(wx) = 5$. 
        Then it follows from Claim~\ref{CASE:533-wx-wy-wz-11} that $\mu(wy) + \mu(wz) = 11-5 = 6$. 
        By Cases 1, 2, and 5, we have $\mu(wy) \le 2$, and hence, $\mu(wz) \ge 6 - 2 = 4$. 
        This means that the triple $\{w,x,z\}$ is either of $\{5,5,3\}$-type or of $\{5,4,3\}$-type, which is impossible by Cases 5 and 6, respectively.   

        Suppose that $\mu(wx) = 4$. 
        Then it follows from Claim~\ref{CASE:533-wx-wy-wz-11} that $\mu(wy) + \mu(wz) = 11-4 = 7$. 
        By Cases 2, 3, and 6, we have $\mu(wy) \le 2$, and hence, $\mu(wz) \ge 7 - 2 = 5$.
        However, this means that the triple $\{w,z,x\}$ is of $\{5,4,3\}$-type, which is impossible by Case 6. 
        This completes the proof of Claim~\ref{CASE:533-wx-wy-3}. 
    \end{proof}

    It follows from Claims~\ref{CASE:533-wx-wy-wz-11} and~\ref{CASE:533-wx-wy-3} that $\left(\mu(wx), \mu(wy), \mu(wz)\right) = (3,3,5)$. 
    Applying~\eqref{EQ:sum-le-2eU+} to the set $S = \{w,x,y,z\}$ yields 
    \begin{align*}
        d_{\vec{\mathcal{G}}}(w)+d_{\vec{\mathcal{G}}}(x)+d_{\vec{\mathcal{G}}}(y)+d_{\vec{\mathcal{G}}}(z) 
        & = 2 |\vec{\mathcal{G}}[S]| + \sum_{u\in V \setminus S} \left(\mu(uw)+\mu(ux)+\mu(uy)+\mu(uz) \right) \\
        & = 44 + \sum_{u\in V \setminus S} \left(\mu(uw)+\mu(ux)+\mu(uy)+\mu(uz) \right). 
    \end{align*}
    By averaging, there exists a vertex $u \in V\setminus S$ such that 
    \begin{align*}
        \mu(uw)+\mu(ux)+\mu(uy)+\mu(uz) 
        & \ge \left\lceil \frac{d_{\vec{\mathcal{G}}}(w)+d_{\vec{\mathcal{G}}}(x)+d_{\vec{\mathcal{G}}}(y)+d_{\vec{\mathcal{G}}}(z) - 44}{n-4}  \right\rceil \notag \\
        & \ge \left\lceil \frac{4 \cdot 44n/13 - 44}{n-4} \right\rceil
        \ge 14. 
    \end{align*}
    By averaging again, there exists a triple $\{a,b,c\} \subseteq \{w,x,y,z\}$ such that 
    \begin{align*}
        \mu(ua) + \mu(ub) + \mu(uc)
        \ge \left\lceil \frac{3}{4} \cdot 14 \right\rceil 
        = 11. 
    \end{align*}
    Since every triple in $\{w,x,y,z\}$ is of $\{5,3,3\}$-type, by symmetry, we may assume that $\{a,b,c\} = \{x,y,z\}$.
    Repeating the above argument with $w$ replaced by $z$, we obtain  $\left(\mu(ux), \mu(uy), \mu(uz)\right) = (3,3,5)$. 
    It follows that $\mu(uw) \ge 14 - \left( \mu(ux)+\mu(uy)+\mu(uz) \right) = 3$. 
    However, this implies that the triple $\{z, u, w\}$ is of $\{5,5,\mu(uw)\}$-type with $\mu(uw) \ge 3$, which is impossible by Cases 1, 2, and 5. 
    Therefore, $(i,j,k) \neq (5,3,3)$. 

    \bigskip

    \textbf{Case 8}: $(i,j,k) = (4,4,3)$.

    Let $w$ be the vertex guaranteed by Claim~\ref{CLAIM:exist-w-sum-11} and let $S \coloneqq \{w,x,y,z\}$.
    Note that 
    \begin{align*}
        |\vec{\mathcal{G}}[S]|
        = \mu(xy) + \mu(xz) + \mu(yz) + \mu(wx)+\mu(wy)+\mu(wz)
        \ge 4+4+3+11
        = 22. 
    \end{align*}
    It follows from Lemma~\ref{LEMMA:f5n-basic-property}~\eqref{ITEM:22->xy=5} that $\vec{\mathcal{G}}[S]$ contains a pair (necessarily one of $\{wx, wy, wz\}$) with multiplicity $5$. 

    Suppose that $\mu(wx) = 5$. 
    By symmetry, we may assume that $\mu(wy) \ge \mu(wz)$, and thus, $\mu(wy) \ge (11-5)/2 = 3$. 
    Then the triple $\{x,w,y\}$ is of $\{5,4,\mu(wy)\}$-type with $\mu(wy) \ge 3$, which is impossible by Cases 2, 3, and 6. 

    Suppose that one of $\{wy, wz\}$ has multiplicity $5$. 
    By symmetry, we may assume that $\mu(wy) = 5$. 
    If $\mu(wx) \ge 3$, then the triple $\{y,w,x\}$ is of $\{5,4,\mu(wx)\}$-type with $\mu(wx) \ge 3$, which is impossible by Cases 2, 3, and 6. 
    Thus $\mu(wx) \le 2$, and hence, $\mu(wz) \ge 11 - 5 -2 = 4$. 
    However, this implies that the triple $\{w,y,z\}$ is of either $\{5,5,3\}$-type or $\{5,4,3\}$-type, which is impossible by Cases 5 and 6, respectively. 
    Therefore, $(i,j,k) \neq (4,4,3)$.

    \bigskip

    \textbf{Case 9}: $(i,j,k) = (4,3,3)$.

    Let $w$ be the vertex guaranteed by Claim~\ref{CLAIM:exist-w-sum-11}.

    \begin{claim}\label{CLAIM:433-wx-wy-3}
        We have $\max\{\mu(wx), \mu(wy)\}\le 3$. 
    \end{claim}
    \begin{proof}[Proof of Claim~\ref{CLAIM:433-wx-wy-3}]
        Suppose to the contrary that $\max\{\mu(wx), \mu(wy)\} \ge 4$. 
        By symmetry, we may assume that $\mu(wx) \ge 4$. 

        Suppose that $\mu(wx) = 4$. 
        Since $\mu(xy) = 4$ and $\{x,y,w\}$ cannot be of $\{4,4,3\}$-type (by Case 8), it follows that $\mu(wy) \le 2$. 
        Therefore, $\mu(wz) \ge 11-4-2 = 5$. 
        However, this implies that the triple $\{w,z,x\}$ is of $\{5,4,3\}$-type, which is impossible by Case 6. 

        Suppose that $\mu(wx) = 5$. 
        Since $\mu(xy) = 4$ and $\{x,w,y\}$ cannot be of $\{5,4,3\}$-type (by Case 6), we have $\mu(wy) \le 2$. 
        Therefore, $\mu(wz) \ge 11-5-2 = 4$. 
        However, this implies that the triple $\{w,x,z\}$ is of either $\{5,5,3\}$-type or $\{5,4,3\}$-type, which is impossible by Cases 5 and 6, respectively. 
        This proves of Claim~\ref{CLAIM:433-wx-wy-3}. 
    \end{proof}

    Since $\mu(wx) + \mu(wy) + \mu(wz) \ge 11$, it follows from Claim~\ref{CLAIM:433-wx-wy-3} that $\left(\mu(wx), \mu(wy), \mu(wz)\right) = (3,3,5)$. 
    This implies that the triple $\{w,z,y\}$ is of $\{5,3,3\}$-type, which is impossible by Case 7. 
    Therefore, $(i,j,k) \neq (4,3,3)$.

    \bigskip

    \textbf{Case 10}: $(i,j,k) = (3,3,3)$.

    Let $w$ be the vertex guaranteed by Claim~\ref{CLAIM:exist-w-sum-11}. 
    By symmetry, we may assume that $\mu(wx) \ge \mu(wy) \ge \mu(wz)$. 
    Since $\mu(wx) + \mu(wy) + \mu(wz) \ge 11$, it follows that $\mu(wx) \ge 4$ and $\mu(wy) \ge 3$. 
    This implies that the triple $\{w,x,y\}$ is of $\{\mu(wx),  \mu(wy), 3\}$-type with $\mu(wx) \ge 4$ and $\mu(wy) \ge 3$, which is impossible by the cases we have already considered. 
    Therefore, $(i,j,k) \neq (3,3,3)$. 
    This completes the proof of Lemma~\ref{LEM:key-lemma-no-333-10-cases}. 
\end{proof}

\subsection{Proof of Theorem~\ref{THM:AES-5-multigraph}}
In this subsection, we present the proof of Theorem~\ref{THM:AES-5-multigraph}.
\begin{proof}[Proof of Theorem~\ref{THM:AES-5-multigraph}]
    Fix a positive integer $n$.
    Let $\vec{\mathcal{G}}=(G_1,\ldots,G_5)$ be an $n$-vertex $\mathbb{K}_{4}$-free $5$-multigraph with $\delta(\vec{\mathcal{G}}) \ge \alpha n$, where we will later consider two cases in the proof: $\alpha = 44/13$ and $\alpha = 58/17$. 
    
    Let $V \coloneqq V(\vec{\mathcal{G}})$. 
    For every vertex $v \in V$ and every $i \in [5]$, let
    \begin{align*}
        N^{(i)}(v)
        \coloneqq \left\{u \in V \colon \mu(uv) = i \right\}
        \quad\text{and}\quad 
        d^{(i)}(v)
        \coloneqq |N^{(i)}(v)|. 
    \end{align*}

    Define a graph
    \begin{align*}
        H
        \coloneqq \left\{\{u,v\} \in \binom{V}{2} \colon \mu(uv) \ge 3\right\}. 
    \end{align*}

    \begin{claim}\label{CLAIM:G-ast-bipartite}
        The graph $H$ is bipartite. 
    \end{claim}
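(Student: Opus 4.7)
The plan is to apply the Andr\'{a}sfai--Erd\H{o}s--S\'{o}s theorem (Theorem~\ref{THM:AES-Theorem}) to $H$. This requires verifying two things: that $H$ is triangle-free, and that $\delta(H) > 2n/5$. Both assertions of the theorem guarantee at least $\delta(\vec{\mathcal{G}}) \ge 44n/13$ (since $58/17 > 44/13$), so it suffices to work under this weaker hypothesis.

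For the triangle-freeness of $H$, I will invoke Lemma~\ref{LEM:key-lemma-no-333-10-cases} directly: a triangle $\{x,y,z\}$ in $H$ is exactly a triple with $\min\{\mu(xy), \mu(xz), \mu(yz)\} \ge 3$, i.e., a triple of $\{i,j,k\}$-type with $\min\{i,j,k\} \ge 3$, which is forbidden.

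For the minimum degree bound on $H$, I would fix a vertex $v \in V$ and use the partition $V \setminus \{v\} = \bigsqcup_{i=0}^{5} N^{(i)}(v)$ to split $d_{\vec{\mathcal{G}}}(v) = \sum_{i=1}^{5} i \cdot d^{(i)}(v)$. Using $d^{(1)}(v) + d^{(2)}(v) \le n - 1 - d_H(v)$ and bounding the contribution of each $i \ge 3$ by $5$, one obtains
\begin{align*}
    d_{\vec{\mathcal{G}}}(v)
    & \le 2\bigl(d^{(1)}(v) + d^{(2)}(v)\bigr) + 5 d_H(v)
    \le 2(n-1) + 3 d_H(v).
\end{align*}
Combining this with $d_{\vec{\mathcal{G}}}(v) \ge 44n/13$ yields
\begin{align*}
    d_H(v)
    \ge \frac{44n/13 - 2(n-1)}{3}
    = \frac{6n}{13} + \frac{2}{3}
    > \frac{2n}{5},
\end{align*}
where the last inequality holds because $6/13 > 2/5$. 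Hence $\delta(H) > 2n/5$, so Theorem~\ref{THM:AES-Theorem} applies and $H$ is bipartite.

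There is no real obstacle here: all the heavy combinatorial work has already been absorbed into Lemma~\ref{LEM:key-lemma-no-333-10-cases}, and the degree estimate is a one-line convexity argument on the multiplicity distribution. The only thing to double-check is that the chosen threshold $44/13$ in part~\eqref{THM:AES-5-multigraph-a} of Theorem~\ref{THM:AES-5-multigraph} is compatible with the $2n/5$ threshold required by Andr\'{a}sfai--Erd\H{o}s--S\'{o}s, which it is.
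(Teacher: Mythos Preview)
Your proposal is correct and follows essentially the same approach as the paper: invoke Lemma~\ref{LEM:key-lemma-no-333-10-cases} for triangle-freeness, derive the inequality $d_{\vec{\mathcal{G}}}(v) \le 2(n-1) + 3d_H(v)$ from the multiplicity decomposition, and conclude via Theorem~\ref{THM:AES-Theorem}. The paper's algebra is organized slightly differently (writing $\sum_i i\,d^{(i)}(v) \le 3d_H(v) + 2\sum_i d^{(i)}(v)$), but the resulting bound and the overall argument are identical.
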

    \begin{proof}[Proof of Claim~\ref{CLAIM:G-ast-bipartite}]
        It follows from Lemma~\ref{LEM:key-lemma-no-333-10-cases} that $H$ is $K_{3}$-free. 
        Therefore, by Theorem~\ref{THM:AES-Theorem}, it suffices to show that $\delta(H) > 2n/5$. 

        Fix an arbitrary vertex $v \in V$. 
        Note that 
        \begin{align*}
            d_{\vec{\mathcal{G}}}(v) 
            = \sum_{i=1}^{5} i \cdot d^{(i)}(v) 
            & \le 3\left(d^{(5)}(v)+ d^{(4)}(v)+d^{(3)}(v)\right) + 2 \left( d^{(5)}(v) + \cdots + d^{(1)}(v)\right) \\
            & \le  3 d_{H}(v)+ 2(n-1). 
        \end{align*}
        Combining it with the lower bound $d_{\vec{\mathcal{G}}}(v) \ge \delta(\vec{\mathcal{G}}) \ge \alpha n$, we obtain 
        \begin{align}\label{equ:degree-G-ast}
            d_{H}(v) 
            \ge \frac{1}{3} \left(\delta(\vec{\mathcal{G}}) - 2(n-1)\right)
            \ge \frac{1}{3} \left(\alpha n - 2(n-1)\right)
            \ge \frac{1}{3} \left(\frac{44}{13}n - 2(n-1)\right)
            > \frac{2}{5}n, 
        \end{align}
        which completes the proof of Claim~\ref{CLAIM:G-ast-bipartite}.
    \end{proof}

    Denote by $V_1$ and $V_2$ the corresponding two parts of $H$. It follows from the definition of $H$ that $\mu(uv) \le 2$ for every pair $\{u,v\} \in \binom{V_1}{2} \cup \binom{V_2}{2}$. 
    By symmetry, we may assume that $|V_1| \ge |V_2|$. 
    It follows from~\eqref{equ:degree-G-ast} that 
    \begin{align*}
        |V_2|
        = n - |V_1|
        \ge \delta(H)
        \ge \frac{1}{3} \left(\delta(\vec{\mathcal{G}}) - 2n\right), 
    \end{align*}
    which implies that 
    \begin{align}\label{EQ:bound-X-Y}
        \frac{\delta(\vec{\mathcal{G}})}{3} - \frac{2n}{3}
        \le |V_2|
        \le \frac{n}{2}
        \le |V_1|
        \le \frac{5n}{3} - \frac{\delta(\vec{\mathcal{G}})}{3}. 
    \end{align}
    Next, we consider the following graphs 
    \begin{align*}
        Q_{1} 
        \coloneqq \left\{\{u,v\} \in \binom{V_{1}}{2} \colon \mu(uv) = 2 \right\}
        \quad\text{and}\quad 
        Q_{2}
        \coloneqq \left\{\{u,v\} \in \binom{V_{2}}{2} \colon \mu(uv) = 2 \right\}. 
    \end{align*}

    \begin{claim}\label{CLAIM:X-connected-diameter-2}
        The following statements hold. 
        \begin{enumerate}[(i)]
            \item\label{CLAIM:X-connected-diameter-2-a} Suppose that $\alpha = 44/13$. 
            Then for every pair of distinct vertices $x, y \in V_1$, there exists a vertex $z \in V_1 \setminus \{x,y\}$ such that $\{xz, yz\} \subseteq Q_{1}$.
            \item\label{CLAIM:X-connected-diameter-2-b} Suppose that $\alpha = 58/17$. 
            Then for every pair of distinct vertices $x, y \in V_2$, there exists a vertex $z \in V_2 \setminus \{x,y\}$ such that $\{xz, yz\} \subseteq Q_{2}$. 
        \end{enumerate}
    \end{claim}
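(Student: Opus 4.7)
The plan is to produce, for each part, a lower bound on the $Q_i$-degree of every vertex of $V_i$ that exceeds $|V_i|/2$. A one-line inclusion-exclusion on $V_i$ then forces any two vertices $x, y \in V_i$ to share a $Q_i$-neighbour, and since $Q_i$ is irreflexive, such a common neighbour automatically lies in $V_i \setminus \{x,y\}$.

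I would first estimate $d_{Q_1}(x)$ for an arbitrary $x \in V_1$. Since $H$ is bipartite with parts $V_1$ and $V_2$, every $V_1$-neighbour of $x$ has multiplicity at most $2$; writing $c_k \coloneqq |N^{(k)}(x) \cap V_1|$ for $k \in \{1,2\}$ gives $c_1 + c_2 \le |V_1|-1$ and $c_2 = d_{Q_1}(x)$. The $V_1$-part of $d_{\vec{\mathcal{G}}}(x)$ then contributes $c_1 + 2c_2 \le (|V_1|-1) + d_{Q_1}(x)$, while the $V_2$-part contributes at most $5|V_2|$; combining these with $d_{\vec{\mathcal{G}}}(x) \ge \alpha n$ and $|V_2| = n - |V_1|$ rearranges to
\begin{align*}
d_{Q_1}(x) \ge (\alpha - 5) n + 4|V_1| + 1,
\end{align*}
and by symmetry $d_{Q_2}(y) \ge (\alpha - 5) n + 4|V_2| + 1$ for every $y \in V_2$. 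I would then insert the bounds from~\eqref{EQ:bound-X-Y}: for~\eqref{CLAIM:X-connected-diameter-2-a} with $\alpha = 44/13$, the estimates $|V_1| \ge n/2$ and $|V_1| \le (5-\alpha)n/3 = 7n/13$ yield $d_{Q_1}(x) \ge 5n/13 + 1$ and hence $d_{Q_1}(x) + d_{Q_1}(y) \ge 10n/13 + 2 > 7n/13 \ge |V_1|$; for~\eqref{CLAIM:X-connected-diameter-2-b} with $\alpha = 58/17$, the bound $|V_2| \ge n/2$ is no longer available, so I would use $|V_2| \ge (\alpha - 2)n/3 = 8n/17$ together with $|V_2| \le n/2$ to deduce $d_{Q_2}(y) \ge 5n/17 + 1$ and therefore $d_{Q_2}(x) + d_{Q_2}(y) \ge 10n/17 + 2 > n/2 \ge |V_2|$. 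In each case, inclusion-exclusion inside $V_i$ finishes the proof.

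I do not expect any serious obstacle beyond book-keeping; the argument is a single double-counting of degrees followed by an inclusion-exclusion. The one point worth highlighting is the asymmetry between the two parts: a vertex $y \in V_2$ can devote up to $5|V_1|$ of its degree to edges reaching the (possibly larger) part $V_1$, so a strictly larger minimum degree is required to retain enough of $d_{\vec{\mathcal{G}}}(y)$ inside $V_2$ to force $d_{Q_2}(y) > |V_2|/2$. This is exactly what drives the threshold up from $44/13$ in (i) to $58/17$ in (ii).
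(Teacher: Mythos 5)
Your proof is correct and is essentially the paper's argument in a lightly repackaged form: the paper fixes the pair $x,y$ and double-counts $d_{\vec{\mathcal{G}}}(x)+d_{\vec{\mathcal{G}}}(y)$ to lower-bound the set of common $Q_i$-neighbours directly, whereas you first derive a uniform lower bound on each individual $Q_i$-degree and then intersect neighbourhoods; both routes use exactly the same inputs, namely $\mu\le 2$ inside a part, $\mu\le 5$ across parts, $\delta(\vec{\mathcal{G}})\ge\alpha n$, and the bounds in~\eqref{EQ:bound-X-Y}. Your numerical verifications ($d_{Q_1}\ge 5n/13+1$ against $|V_1|\le 7n/13$ for part~(i), and $d_{Q_2}\ge 5n/17+1$ against $|V_2|\le n/2$ for part~(ii)) all check out.
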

    \begin{proof}[Proof of Claim~\ref{CLAIM:X-connected-diameter-2}]
        Let us first prove~\eqref{CLAIM:X-connected-diameter-2-a}. 
        Fix two vertices $x, y \in V_1$. Let 
        \begin{align*}
            W_1 
            \coloneqq \left\{z \in V_1 \setminus \{x,y\} \colon \mu(xz)+\mu(yz)=4\right\}.
        \end{align*}
        %
        Note that 
        \begin{align*}
            d_{\vec{\mathcal{G}}}(x) + d_{\vec{\mathcal{G}}}(y) 
            &= 2 \mu(xy) + \sum_{w \in V \setminus \{x,y\} }\left(\mu(wx)+\mu(wy)\right) \\
            &\le 4  + 4|W_1| + 3 \left(|V_1|-2-|W_1|\right)+10|V_2|\\
            & = |W_1|+7|V_2| + 3n-2\\
            & < |W_1|+\frac{13 n}{2}, 
        \end{align*}
        where the last inequality follows from the fact $|V_2| \le n/2$.  
        
        Combining the inequality above with the assumption $\delta(\vec{\mathcal{G}})\ge \alpha n = 44n/13$, we obtain  
        \begin{align*}
            |W_1| 
            > d_{\vec{\mathcal{G}}}(x) + d_{\vec{\mathcal{G}}}(y) - \frac{13 n}{2} 
            \ge 2 \cdot \frac{44n}{13} - \frac{13 n}{2}
            = \frac{7n}{26}
            >0. 
        \end{align*}
        Therefore, there exists a vertex $z \in V_1 \setminus \{x,y\}$ such that $\mu(xz)+\mu(yz)=4$.
        Since $\max\{\mu(xz), \mu(yz)\} \le 2$, it must hold that $\mu(xz) = \mu(yz) = 2$. 
        This proves~\eqref{CLAIM:X-connected-diameter-2-a}.

        Next, we prove~\eqref{CLAIM:X-connected-diameter-2-b}. 
        Let 
        \begin{align*}
            W_1'
            \coloneqq \left\{w \in V_2 \colon \mu(wx)+\mu(wy)=4\right\}. 
        \end{align*}
        Similar to the proof of~\eqref{CLAIM:X-connected-diameter-2-a}, it suffices to show that $W_1' \neq \emptyset$. 
        
        It follows from~\eqref{EQ:bound-X-Y} that  
        \begin{align*}
            2  \delta(\vec{\mathcal{G}}) 
            \le d_{\vec{\mathcal{G}}}(x) + d_{\vec{\mathcal{G}}}(y) 
            &= 2\mu(xy) + \sum_{w \in V \setminus \{x,y\} }\left(\mu(wx)+\mu(wy)\right) \\
            &\le 4  + 4|W_1'|+3\left(|V_2|-2-|W_1'|\right)+10|V_1|\\
            & = |W_1'|+7|V_1|+3n-2\\
            & < |W_1'|+7\left(\frac{5}{3}n - \frac{1}{3}\delta(\vec{\mathcal{G}})\right)+3n\\
            & = |W_1'| + \frac{44}{3}n - \frac{7}{3}\delta(\vec{\mathcal{G}}), 
        \end{align*}
        which, combined with the assumption $\alpha = 58/17$, implies that
        \begin{align*}
            |W_1'|
            > \frac{13}{3}\delta(\vec{\mathcal{G}})- \frac{44}{3}n 
            \ge \frac{13}{3} \cdot \frac{58}{17}n - \frac{44}{3}n 
            = \frac{2}{17}n
            >0.
        \end{align*}
        This shows that $W_1' \neq \emptyset$, and hence, proves~\eqref{CLAIM:X-connected-diameter-2-b}. 
    \end{proof}
    
    \begin{claim}\label{CLAIM:X-rainbowK_3-free}
        The following statements hold. 
        \begin{enumerate}[(i)]
            \item\label{CLAIM:X-rainbowK_3-free-a} Suppose that $\alpha = 44/13$. Then for every triple of vertices $\{x,y,z\} \subseteq V_1$, there are no distinct indices $i,j,k \in [5]$ such that $xy \in G_{i}$, $yz \in G_{j}$, and $xz \in G_{k}$. 
            \item\label{CLAIM:X-rainbowK_3-free-b} Suppose that $\alpha = 58/17$. Then for every triple of vertices $\{x,y,z\} \subseteq V_2$, there are no distinct indices $i,j,k \in [5]$ such that $xy \in G_{i}$, $yz \in G_{j}$, and $xz \in G_{k}$. 
        \end{enumerate}
    \end{claim}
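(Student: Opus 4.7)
The plan is to derive a contradiction by extending any rainbow triangle in $V_1$ to a copy of $\mathbb{K}_4$, thereby contradicting $\mathbb{K}_4$-freeness of $\vec{\mathcal{G}}$. I outline the argument for part~(\ref{CLAIM:X-rainbowK_3-free-a}); part~(\ref{CLAIM:X-rainbowK_3-free-b}) is parallel, with $V_2$ playing the role of $V_1$ and the strengthened hypothesis $\delta(\vec{\mathcal{G}}) \ge 58n/17$ compensating for the reversed size inequality $|V_2| \le |V_1|$.

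Suppose for contradiction that $\{x,y,z\} \subseteq V_1$ and distinct $i,j,k \in [5]$ satisfy $xy \in G_i$, $yz \in G_j$, and $xz \in G_k$. The key observation is that any $w \in V \setminus \{x,y,z\}$ with $wx \in G_j$, $wy \in G_k$, and $wz \in G_i$ would yield a copy of $\mathbb{K}_4$ on $\{w,x,y,z\}$: the three perfect matchings $\{xy,wz\}$, $\{yz,wx\}$, $\{xz,wy\}$ would lie in three distinct color classes $G_i, G_j, G_k$. Hence $|N_{G_j}(x) \cap N_{G_k}(y) \cap N_{G_i}(z)| \le 3$, and inclusion--exclusion yields the color-degree constraint
\begin{align*}
d_{G_j}(x) + d_{G_k}(y) + d_{G_i}(z) \le 2n + 3.
\end{align*}

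To contradict this, I exploit the bipartition. Since $\mu \le 2$ on $V_1$ and $|V_1| \le 7n/13$ by~\eqref{EQ:bound-X-Y}, each $v \in \{x,y,z\}$ satisfies $\sum_{u \in V_2}\mu(uv) \ge d_{\vec{\mathcal{G}}}(v) - 2(|V_1|-1) \ge 30n/13$. Since for every pair $\{u,v\}$ and every color $q$ the set $C(uv) \setminus \{q\}$ has at most four elements, isolating the relevant colors in $\sum_{u \in V_2}\mu(ux) = \sum_{p \in [5]}|N_{G_p}(x) \cap V_2|$ gives $d_{G_j}(x) \ge |N_{G_j}(x) \cap V_2| \ge 30n/13 - 4|V_2|$, and analogously for $y,z$, producing $d_{G_j}(x) + d_{G_k}(y) + d_{G_i}(z) \ge 90n/13 - 12|V_2|$. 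Combined with the color-degree constraint above, this forces $|V_2| \ge 16n/39 - O(1)$, which is consistent with but short of the upper bound $|V_2| \le n/2$.

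The main obstacle is closing this final gap. To conclude, I will assemble additional color-degree constraints of the same form, coming from two sources: (i) alternative $\mathbb{K}_4$ labelings that become available whenever some multiplicity $\mu(xy)$, $\mu(yz)$, or $\mu(xz)$ equals $2$, so that the corresponding matching admits a second color choice and produces a parallel color-degree inequality; and (ii) the abundance ($> 7n/26$) of vertices $w \in V_1$ with $\mu(wu) = \mu(wv) = 2$ for each prescribed pair $\{u,v\} \subseteq \{x,y,z\}$, guaranteed by Claim~\ref{CLAIM:X-connected-diameter-2}(i), at each of which $\mathbb{K}_4$-freeness constrains the color pattern at $w$. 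Aggregating these constraints carefully should push the required lower bound on $|V_2|$ strictly past $n/2$, delivering the contradiction.
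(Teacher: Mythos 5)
Your proposal does not establish the claim: the computation you actually carry out bottoms out at $|V_2| \ge 16n/39 - O(1)$, which (as you concede) is consistent with $|V_2| \le n/2$, and the paragraph intended to close the gap is a plan rather than an argument. That plan also has a concrete hole: source (i) yields no additional color-degree inequality in the case $\mu(xy)=\mu(yz)=\mu(xz)=1$, which your hypothesis certainly allows, and source (ii) is never turned into a quantitative constraint. So the claim is not proved as written. The underlying reason your main inequality falls short is that the single-color degrees $d_{G_j}(x)$, $d_{G_k}(y)$, $d_{G_i}(z)$ discard the key structural fact that \emph{total} multiplicities inside $V_1$ are at most $2$; your lower bound $d_{G_j}(x) \ge 30n/13 - 4|V_2|$ only subtracts the other four colors on $V_2$-pairs and gains nothing from the $V_1$ side.

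For comparison, the paper's proof uses a coarser but more efficient exclusion. It observes that if $w \in V_2$ had $\mu(wx)=\mu(wy)=\mu(wz)=5$, then every color is present on $wx$, $wy$, $wz$, so $\{xy,wz\}\subseteq G_i$, $\{yz,wx\}\subseteq G_j$, $\{xz,wy\}\subseteq G_k$ would form a copy of $\mathbb{K}_4$; hence $\mu(wx)+\mu(wy)+\mu(wz)\le 14$ for every $w\in V_2$. Since every pair inside $V_1$ has multiplicity at most $2$, each $w\in V_1$ contributes at most $6$ to the same sum and $|\vec{\mathcal{G}}[\{x,y,z\}]|\le 6$, whence
\begin{align*}
d_{\vec{\mathcal{G}}}(x)+d_{\vec{\mathcal{G}}}(y)+d_{\vec{\mathcal{G}}}(z) \le 12 + 6\left(|V_1|-3\right) + 14|V_2| < 6n + 8|V_2| \le 10n,
\end{align*}
contradicting $3\delta(\vec{\mathcal{G}}) \ge 132n/13 > 10n$. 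The decisive gain is that the cap of $6$ (rather than $15$) on the $V_1$-side contributions does most of the work, which is exactly the information your single-color bookkeeping throws away. If you wish to salvage your route rather than adopt this one, you must incorporate the bound $\sum_{u\in V_1}\mu(ux) \le 2(|V_1|-1)$ into the aggregation, not merely into the preliminary estimate $\sum_{u\in V_2}\mu(ux)\ge 30n/13$.
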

    \begin{proof}[Proof of Claim~\ref{CLAIM:X-rainbowK_3-free}]
        Let us first prove~\eqref{CLAIM:X-rainbowK_3-free-a}.
        Suppose to the contrary that there exist distinct vertices $\{x,y,z\} \subseteq V_1$ and distinct indices $\{i,j,k\} \subseteq [5]$ such that $xy \in G_{i}$, $yz \in G_{j}$, and $xz \in G_{k}$. 
        Define 
        \begin{align*}
            W_2 
            \coloneqq \left\{w \in V_2 \colon \mu(wx)=\mu(wy)=\mu(wz)=5 \right\}.
        \end{align*}
        It is easy to see from the $\mathbb{K}_{4}$-freeness of $\vec{\mathcal{G}}$ that $W_2 = \emptyset$. 
        Therefore, $\mu(wx)+\mu(wy)+\mu(wz) \le 14$ for every $w \in V_2$. 
        Combining it with~\eqref{EQ:sum-le-2eU+} (applied to the set $\{x,y,z\}$), we obtain 
        \begin{align*}
            d_{\vec{\mathcal{G}}}(x)+d_{\vec{\mathcal{G}}}(y)+d_{\vec{\mathcal{G}}}(z) 
            & = 2 |\vec{\mathcal{G}}[\{x,y,z\}]| + \sum_{w \in V \setminus \{x,y,z\} }\left(\mu(wx)+\mu(wy)+\mu(wz)\right) \\
            & \le 12 + 6 \left(|V_1|-3\right)+ 14 |V_2|
            < 6 n + 8 |V_2|
            \le 10n, 
        \end{align*}
        where the last inequality follows from the fact that $|V_2| \le n/2$. 
        This is a contradiction to the lower bound 
        \begin{align*}
            d_{\vec{\mathcal{G}}}(x)+d_{\vec{\mathcal{G}}}(y)+d_{\vec{\mathcal{G}}}(z)
            \ge 3 \cdot \delta(\vec{\mathcal{G}})
            \ge 3 \alpha n
            = 3 \cdot \frac{44}{13} n
            > 10n. 
        \end{align*}

        Next, we prove~\eqref{CLAIM:X-rainbowK_3-free-b}.
        The proof is similar to that of~\eqref{CLAIM:X-rainbowK_3-free-a}. 
        Suppose to the contrary that there exist distinct vertices $\{x,y,z\} \subseteq V_2$ and distinct indices $\{i,j,k\} \subseteq [5]$ such that $xy \in G_{i}$, $yz \in G_{j}$, and $xz \in G_{k}$. 
        Let 
        \begin{align*}
            W_2' 
            \coloneqq \left\{w \in V_1 \colon \mu(wx)=\mu(wy)=\mu(wz)=5 \right\}. 
        \end{align*}
        It follows from the $\mathbb{K}_{4}$-freeness of $\vec{\mathcal{G}}$ that $W_2' = \emptyset$. 
        So it follows from~\eqref{EQ:sum-le-2eU+} (applied to the set $\{x,y,z\}$) that 
        \begin{align*}
            d_{\vec{\mathcal{G}}}(x)+d_{\vec{\mathcal{G}}}(y)+d_{\vec{\mathcal{G}}}(z) 
            & = 2 |\vec{\mathcal{G}}[\{x,y,z\}]| + \sum_{w \in V \setminus \{x,y,z\} }\left(\mu(wx)+\mu(wy)+\mu(wz)\right) \\
            & \le 12 + 6 \left(|V_2|-3\right)+ 14 |V_1|
            < 6 n + 8 |V_1|.
        \end{align*}
        Combining it with~\eqref{EQ:bound-X-Y}, we obtain 
        \begin{align*}
            \frac{5}{3} n - \frac{1}{3} \delta(\vec{\mathcal{G}})
            \ge |V_1|
            > \frac{1}{8} \left(d_{\vec{\mathcal{G}}}(x)+d_{\vec{\mathcal{G}}}(y)+d_{\vec{\mathcal{G}}}(z) - 6n\right)
            \ge \frac{1}{8} \left( 3 \delta(\vec{\mathcal{G}}) - 6n\right). 
        \end{align*}
        which implies $\delta(\vec{\mathcal{G}}) < 58n/17 = \alpha n$, a contradiction. 
        This proves~\eqref{CLAIM:X-rainbowK_3-free-b}.
    \end{proof}

    \begin{claim}\label{CLAIM:X-12-no345}
        The following statements hold. 
        \begin{enumerate}[(i)]
            \item\label{CLAIM:X-12-no345-a} Suppose that $\alpha = 44/13$. Let $x, y, z \in V_1$ be three distinct vertices and suppose that there exist distinct indices $i, j \in [5]$ such that $xy \in G_i \cap G_j$. 
            Then there is no index $k \in [5] \setminus \{i,j\}$ such that $yz \in G_{k}$. 
            \item\label{CLAIM:X-12-no345-b} Suppose that $\alpha = 58/17$. Let $x, y, z \in V_2$ be three distinct vertices and suppose that there exist distinct indices $i, j \in [5]$ such that $xy \in G_i \cap G_j$. 
            Then there is no index $k \in [5] \setminus \{i,j\}$ such that $yz \in G_{k}$. 
        \end{enumerate} 
    \end{claim}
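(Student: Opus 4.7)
I will describe the plan for part~(i); part~(ii) follows the same template with Claim~\ref{CLAIM:X-connected-diameter-2}~\eqref{CLAIM:X-connected-diameter-2-b} in place of~\eqref{CLAIM:X-connected-diameter-2-a} and $\delta(\vec{\mathcal{G}}) \ge 58n/17$ in place of $44n/13$. The strategy is proof by contradiction: fix distinct $x, y, z \in V_1$ and distinct $i, j, k \in [5]$ with $xy \in G_i \cap G_j$ and $yz \in G_k$, and call any configuration of this shape on any triple in $V_1$ a \emph{bad triple}. The engine of the argument is the following \emph{core fact}: for every bad triple $(a, b, c) \subseteq V_1$, with $ab \in G_{i'} \cap G_{j'}$ and $bc \in G_{k'}$ where $k' \notin \{i', j'\}$, one has $\mu(ac) \le 1$. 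Once the core fact is in hand, I iterate it on triples supplied by Claim~\ref{CLAIM:X-connected-diameter-2}~\eqref{CLAIM:X-connected-diameter-2-a} to push $d_{\vec{\mathcal{G}}}(z)$ below $\delta(\vec{\mathcal{G}})$.

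To prove the core fact, I first show that the set $W_3 \coloneqq \{w \in V_2 : \mu(wa) = \mu(wb) = \mu(wc) = 5\}$ is non-empty by a degree-sum argument in the spirit of Claim~\ref{CLAIM:X-rainbowK_3-free}~\eqref{CLAIM:X-rainbowK_3-free-a}: if $W_3$ were empty then $\mu(wa) + \mu(wb) + \mu(wc) \le 14$ for every $w \in V_2$, and combining this with pair-multiplicities at most $2$ inside $V_1$ gives $d_{\vec{\mathcal{G}}}(a) + d_{\vec{\mathcal{G}}}(b) + d_{\vec{\mathcal{G}}}(c) \le 12 + 6(|V_1| - 3) + 14|V_2| \le 10n - 6$, which contradicts $3\delta(\vec{\mathcal{G}}) \ge 132n/13$. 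Fixing any $w \in W_3$, producing a $\mathbb{K}_{4}$ on $\{w, a, b, c\}$ amounts to choosing distinct colours $\alpha \in C(bc)$, $\beta \in C(ac)$, $\gamma \in C(ab)$, because $C(wa) = C(wb) = C(wc) = [5]$ reduces the three pair-partitions $\{wa, bc\}$, $\{wb, ac\}$, $\{wc, ab\}$ to those respective sets. A short Hall-type check that uses $\{i', j'\} \subseteq C(ab)$ and $k' \in C(bc) \setminus \{i', j'\}$ shows that the only obstructions to such a rainbow transversal are $C(ac) = \emptyset$ or $C(ac) = C(bc) = \{k'\}$; by $\mathbb{K}_{4}$-freeness one of these must hold, and both give $\mu(ac) \le 1$.

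To conclude part~(i), I apply the core fact to $(x, y, z)$ to obtain $\mu(xz) \le 1$. The proof of Claim~\ref{CLAIM:X-connected-diameter-2}~\eqref{CLAIM:X-connected-diameter-2-a} supplies the set $W_1 \coloneqq \{w' \in V_1 \setminus \{x, y\} : \mu(xw') = \mu(yw') = 2\}$ with $|W_1| > 7n/26$. For each $w' \in W_1$: if $C(yw') \not\subseteq \{i, j\}$ then $(x, y, w')$ is itself a bad triple and the core fact forces $\mu(xw') \le 1$, contradicting $\mu(xw') = 2$; hence $C(yw') = \{i, j\}$ and, symmetrically, $C(xw') = \{i, j\}$. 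In particular $z \notin W_1$ (since $\mu(xz) \le 1 \ne 2$), so $(w', y, z)$ is also a bad triple, and the core fact yields $\mu(w'z) \le 1$. Summing these bounds,
\[
d_{\vec{\mathcal{G}}}(z) \le \mu(xz) + \mu(yz) + \sum_{u \in W_1} \mu(uz) + 2(|V_1| - |W_1| - 3) + 5|V_2| \le 2n + 3|V_2| - |W_1| - 3,
\]
and using $|V_2| \le n/2$ with $|W_1| > 7n/26$ gives $d_{\vec{\mathcal{G}}}(z) < 42n/13 - 3 < 44n/13 \le \delta(\vec{\mathcal{G}})$, a contradiction.

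The main obstacle is the Hall-type case analysis inside the core fact: one must pin down precisely the two degenerate configurations $C(ac) = \emptyset$ and $C(ac) = C(bc) = \{k'\}$ as the only ways the $\mathbb{K}_{4}$ construction can fail, and this requires a small but careful enumeration. A further delicate point is making sure the recursive applications genuinely produce bad triples, which relies on the bound $\mu(xz) \le 1 < 2 = \mu(xw')$ to rule out $w' = z$. For part~(ii), the same four-step scheme goes through verbatim with the roles of $V_1$ and $V_2$ swapped and Claim~\ref{CLAIM:X-connected-diameter-2}~\eqref{CLAIM:X-connected-diameter-2-b} replacing~\eqref{CLAIM:X-connected-diameter-2-a}; the degree-sum inequality forcing $W_3 \ne \emptyset$ becomes essentially tight at $\alpha = 58/17$, which is exactly why the stronger minimum-degree hypothesis is imposed in~(ii).
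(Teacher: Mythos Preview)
Your argument for part~(i) is correct and takes a genuinely different route from the paper. The paper finds a common neighbour $w\in V_1$ with $\mu(wx)\ge 1$ and $\mu(wy)=\mu(wz)=2$, and then applies Claim~\ref{CLAIM:X-rainbowK_3-free}~\eqref{CLAIM:X-rainbowK_3-free-a} twice to reach a contradiction inside $V_1$. You instead locate a vertex $w\in V_2$ with all three multiplicities equal to $5$, extract the constraint $\mu(ac)\le 1$ for every bad triple via an SDR argument on $\{C(ab),C(bc),C(ac)\}$, and then propagate this through the set $W_1$ to depress $d_{\vec{\mathcal{G}}}(z)$. Your approach has the virtue of isolating a reusable ``core fact'' and avoiding a separate appeal to Claim~\ref{CLAIM:X-rainbowK_3-free}; the paper's argument is shorter and stays entirely inside $V_1$.

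However, your assertion that part~(ii) goes through verbatim is incorrect: the final degree count does not yield a contradiction when $n$ is large. With $x,y,z\in V_2$, the analogue of your last inequality reads
\[
d_{\vec{\mathcal{G}}}(z)\;\le\; 2n+3|V_1|-|W_1'|-3.
\]
Inserting the sharpest available bounds, namely $|W_1'|\ge 2\delta(\vec{\mathcal{G}})-7|V_1|-3n+2$ from the proof of Claim~\ref{CLAIM:X-connected-diameter-2}~\eqref{CLAIM:X-connected-diameter-2-b} and $|V_1|\le \tfrac{5n}{3}-\tfrac{\delta(\vec{\mathcal{G}})}{3}$ from~\eqref{EQ:bound-X-Y}, one obtains
\[
d_{\vec{\mathcal{G}}}(z)\;\le\;\frac{65n}{3}-\frac{16\,\delta(\vec{\mathcal{G}})}{3}-5,
\]
and this is $<\delta(\vec{\mathcal{G}})$ only when $\delta(\vec{\mathcal{G}})>\tfrac{65n-15}{19}$. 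Since $\tfrac{58}{17}<\tfrac{65}{19}$, the hypothesis $\delta(\vec{\mathcal{G}})\ge \tfrac{58n}{17}$ is not enough once $n\ge 85$. The asymmetry with part~(i) is that there $|V_2|\le n/2$ gives plenty of room, whereas in part~(ii) the opposite part $V_1$ can be as large as $\tfrac{9n}{17}$, which your estimate cannot absorb. The paper circumvents this by intersecting three sets $W_3',W_4',W_5'\subseteq V_2$ and showing their common intersection is non-empty; that calculation is the one that is essentially tight at $\alpha=\tfrac{58}{17}$, not the $W_3\neq\emptyset$ step you highlighted. To repair part~(ii) you would need either a substantially larger lower bound on $|W_1'|$ or additional structural constraints on $\mu(uz)$ for $u\in V_2\setminus W_1'$, neither of which follows from the core fact alone.
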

    \begin{proof}[Proof of Claim~\ref{CLAIM:X-12-no345}]
        We prove~\eqref{CLAIM:X-12-no345-a} first. 
        Suppose to the contrary that there exist distinct vertices $\{x,y,z\} \subseteq V_1$ and distinct indices $\{i,j,k\} \subseteq [5]$ such that $xy \in G_{i} \cap G_{j}$ and $yz \in G_{k}$. 
        By symmetry, we may assume that $(i,j,k) = (1,2,3)$.
        Let 
        \begin{align*}
            W_3 
            & \coloneqq \left\{w\in V_1 \colon \mu(wx) \ge 1\right\}, \\ 
            W_4 
            & \coloneqq \left\{w\in V_1 \colon \mu(wy) = 2\right\}, 
            \quad\text{and}\quad \\
            W_5 
            & \coloneqq \left\{w\in V_1 \colon \mu(wz) = 2\right\}
        \end{align*}
        Since $\delta(\vec{\mathcal{G}}) \le d_{\vec{\mathcal{G}}}(x) \le 2|W_3|+5|V_2|$, we have 
        \begin{align}\label{EQ:lower-bound-W3}
            |W_3| 
            \ge \frac{\delta(\vec{\mathcal{G}})-5|V_2|}{2}.
        \end{align}
        Since 
        \begin{align*}
            \delta(\vec{\mathcal{G}}) 
            \le d_{\vec{\mathcal{G}}}(y) 
            \le 2|W_4|+ \left(|V_1|-1-|W_4| \right)+5|V_2|
            < |W_4|+4|V_2|+n,
        \end{align*}
        we have 
        \begin{align}\label{EQ:lower-bound-W4}
            |W_4|
            > \delta(\vec{\mathcal{G}})-4|V_2|-n.
        \end{align}
        For a similar reason, we have 
        \begin{align}\label{EQ:lower-bound-W5}
            |W_5|
            > \delta(\vec{\mathcal{G}})-4|V_2|-n.
        \end{align}
        Combining~\eqref{EQ:lower-bound-W3},~\eqref{EQ:lower-bound-W4} and~\eqref{EQ:lower-bound-W5}, we obtain 
        \begin{align*}
            \left|W_3 \cap W_4 \cap W_5\right| 
            &\ge |W_3|+|W_4|+|W_5|-2|V_1| \\
            &> \frac{\delta(\vec{\mathcal{G}})-5|V_2|}{2} + 2\left(\delta(\vec{\mathcal{G}})-4|V_2|-n\right) - 2\left(n-|V_2| \right)  \\
            &= \frac{5}{2} \delta(\vec{\mathcal{G}}) -\frac{17}{2}|V_2|-4n 
            \ge \frac{5}{2} \cdot \frac{44}{13}n - \frac{17}{2} \cdot \frac{1}{2}n - 4n
            =  \frac{11}{52} n
            > 0.
        \end{align*}
        Therefore, there exists a vertex $w \in W_3 \cap W_4 \cap W_5$. 
        By the definitions of $W_3, W_4$, and $W_5$, we have $\mu(wx) \ge 1$ and $\mu(wy) = \mu(wz)=2$.
        
        Suppose, for the sake of contradiction, that $C(wy) \neq \{1,2\}$. 
        Fix $i \in C(wx)$.
        If $i \in \{1,2\}$, then let $j \in \{1,2\} \setminus \{i\}$. Since $C(wy) \neq \{1,2\} = \{i,j\}$, there exists some $k \in C(wy)\setminus \{i,j\}$.
        However, this implies that the triple $\{w,x,y\}$ contradicts Claim~\ref{CLAIM:X-rainbowK_3-free}~\eqref{CLAIM:X-rainbowK_3-free-a}. 
        So we may assume that $i \not\in \{1,2\}$. 
        Then fix some $k \in C(wy)\setminus \{i\}$, and then choose $j \in \{1,2\} \setminus \{k\}$. This also implies that the triple $\{w,x,y\}$ contradicts Claim~\ref{CLAIM:X-rainbowK_3-free}~\eqref{CLAIM:X-rainbowK_3-free-a}. 
        Therefore, $C(wy) = \{1,2\}$.
        
        Since $\mu(wz) = 2$, there exists some $i \in C(wz) \setminus \{3\}$. 
        Since $C(wy) = \{1,2\}$, there exists some $j \in C(wy) \setminus \{3,i\}$.
        However, this implies that the triple $\{w,y,z\}$ contradicts Claim~\ref{CLAIM:X-rainbowK_3-free}~\eqref{CLAIM:X-rainbowK_3-free-a}. This completes the proof of~\eqref{CLAIM:X-12-no345-a}. 

        The proof of~\eqref{CLAIM:X-12-no345-b} is similar to that of~\eqref{CLAIM:X-12-no345-a}.
        Suppose to the contrary that there exist distinct vertices $\{x,y,z\} \subseteq V_2$ and distinct indices $\{i,j,k\} \subseteq [5]$ such that $xy \in G_{i} \cap G_{j}$ and $yz \in G_{k}$. 
        Let 
        \begin{align*}
            W_3' 
            & \coloneqq \left\{w \in V_2 \colon \mu(wx) \ge 1 \right\}, \\
            W_4'
            & \coloneqq \left\{w \in V_2 \colon \mu(wy)=2 \right\}, \quad\text{and}\quad \\
            W_5'
            & \coloneqq \left\{w \in V_2 \colon \mu(wz)=2 \right\}. 
        \end{align*}
        We will show that $W_3' \cap W_4' \cap W_5' \neq \emptyset$.
        By an argument similar to the proof of~\eqref{CLAIM:X-12-no345-a}, this contradicts to Claim~\ref{CLAIM:X-rainbowK_3-free}~\eqref{CLAIM:X-rainbowK_3-free-b}. 

        First, it follows from $\delta(\vec{\mathcal{G}}) \le d_{\vec{\mathcal{G}}}(x) \le 2|W_3'|+5|V_1|$ that 
        \begin{align}\label{EQ:bound-W3'}
            |W_3'| 
            \ge \frac{\delta(\vec{\mathcal{G}})-5|V_1|}{2}. 
        \end{align}
        It follows from 
        \begin{align*}
            \delta(\vec{\mathcal{G}}) 
            \le d_{\vec{\mathcal{G}}}(y) 
            \le 2|W_4'|+ \left(|V_2|-1-|W_4'|\right) + 5|V_1|
            < |W_4'|+4|V_1|+n 
        \end{align*}
        that 
        \begin{align}\label{EQ:bound-W4'}
            |W_4'|
            > \delta(\vec{\mathcal{G}})-4|V_1|-n. 
        \end{align}
        For a similar reason, we have 
        \begin{align}\label{EQ:bound-W5'}
            |W_5'|
            > \delta(\vec{\mathcal{G}})-4|V_1|-n.  
        \end{align}
        Combining~\eqref{EQ:bound-W3'},~\eqref{EQ:bound-W4'},~\eqref{EQ:bound-W5'}, and~\eqref{EQ:bound-X-Y}, we obtain  
        \begin{align*}
            \left|W_3' \cap W_4' \cap W_5'\right| 
            & \ge |W_3'|+|W_4'|+|W_5'|-2|V_2| \\
            &> \frac{\delta(\vec{\mathcal{G}})-5|V_1|}{2} + 2\left(\delta(\vec{\mathcal{G}})-4|V_1|-n\right) -2\left(n-|V_1|\right) \\
            &= \frac{5}{2} \delta(\vec{\mathcal{G}}) -\frac{17}{2}|V_1|-4n \\
            &\ge \frac{5}{2} \delta(\vec{\mathcal{G}}) -\frac{17}{2}\left(\frac{5}{3}n - \frac{1}{3}\delta(\vec{\mathcal{G}})\right)-4n \\
            & = \frac{16}{3} \delta(\vec{\mathcal{G}}) - \frac{109}{6} n
            \ge \frac{16}{3} \cdot \frac{58}{17}n - \frac{109}{6} n
            = \frac{1}{34}n.
        \end{align*}
        Therefore, $W_3' \cap W_4' \cap W_5' \neq \emptyset$, and hence, completes the proof of~\eqref{CLAIM:X-12-no345-b}.
    \end{proof}

    \begin{claim}\label{CLAIM:Q-in-Gi-G-j}
        The following statements hold. 
        \begin{enumerate}[(i)]
            \item\label{CLAIM:Q-in-Gi-G-j-a}  Suppose that $\alpha = 44/13$. Then there exist two distinct indices $i, j \in [5]$ such that $C(e) = \{i,j\}$ for every edge $e \in Q_{1}$.
            \item\label{CLAIM:Q-in-Gi-G-j-b}  Suppose that $\alpha = 58/17$. Then there exist two distinct indices $i', j' \in [5]$ such that $C(e) = \{i', j'\}$ for every edge $e \in Q_{2}$.
        \end{enumerate}
    \end{claim}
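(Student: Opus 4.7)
The plan is to derive Claim~\ref{CLAIM:Q-in-Gi-G-j} as an almost immediate corollary of Claims~\ref{CLAIM:X-connected-diameter-2} and~\ref{CLAIM:X-12-no345}. The two statements are completely symmetric: (i) uses the $\alpha = 44/13$ versions of the preceding claims and works inside $V_1$ with $Q_1$, while (ii) uses the $\alpha = 58/17$ versions and works inside $V_2$ with $Q_2$. I will only sketch (i); (ii) follows by interchanging the roles of $V_1$ and $V_2$. We may assume $Q_1 \ne \emptyset$ (otherwise the statement is vacuous), and fix an arbitrary edge $xy \in Q_1$ with $C(xy) = \{i,j\}$.

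The first step is a \emph{local color-rigidity} observation: whenever two edges in $Q_1$ share a vertex, they carry the same two colors. Indeed, suppose $ab \in Q_1$ with $C(ab) = \{i,j\}$ and $ac \in Q_1$ for some $c \in V_1 \setminus \{a,b\}$. Applying Claim~\ref{CLAIM:X-12-no345}\eqref{CLAIM:X-12-no345-a} to the triple $\{b,a,c\}$ forbids any color outside $\{i,j\}$ from appearing on $ac$. Since $\mu(ac)=2$, we must have $C(ac) = \{i,j\}$. The same argument applies with roles reversed to $bc \in Q_1$.

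The second step is a \emph{connectivity} argument to propagate the color set to every edge of $Q_1$. Let $e' = u'v' \in Q_1$ be arbitrary; we show $C(e') = \{i,j\}$. If $e'$ shares a vertex with $xy$, apply the first step. Otherwise $\{u',v'\} \cap \{x,y\} = \emptyset$, and Claim~\ref{CLAIM:X-connected-diameter-2}\eqref{CLAIM:X-connected-diameter-2-a} applied to the pair $y, u'$ yields a vertex $z \in V_1 \setminus \{y, u'\}$ with $\{yz, u'z\} \subseteq Q_1$. Regardless of whether $z$ coincides with $x$ or $v'$ or neither, we obtain a chain of edges in $Q_1$ each consecutive pair of which shares a vertex: $xy \to yz \to u'z \to u'v'$ (if $z = x$ the chain collapses to $xy \to xu' \to u'v'$; if $z = v'$ it collapses to $xy \to yv' \to v'u'$). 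Iterating the local rigidity step along the chain yields $C(e') = \{i,j\}$.

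There is no real obstacle here; the only bookkeeping required is to check that the vertex $z$ produced by the connectivity claim always yields edges in $Q_1$ that share vertices along the chain, which is automatic since the claim guarantees $z \notin \{y, u'\}$ and the relevant edges $yz$ and $u'z$ are explicitly in $Q_1$. The argument for (ii) is verbatim the same, using Claim~\ref{CLAIM:X-connected-diameter-2}\eqref{CLAIM:X-connected-diameter-2-b} and Claim~\ref{CLAIM:X-12-no345}\eqref{CLAIM:X-12-no345-b} inside $V_2$.
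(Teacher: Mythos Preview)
Your proof is correct and takes essentially the same approach as the paper: both use Claim~\ref{CLAIM:X-12-no345} to show that adjacent edges of $Q_1$ (resp.\ $Q_2$) carry the same color pair, and then Claim~\ref{CLAIM:X-connected-diameter-2} to propagate this globally. The only cosmetic difference is that the paper phrases the second step as ``$Q_1$ is connected'' whereas you build an explicit length-$\le 3$ chain, which amounts to the same thing.
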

    \begin{proof}[Proof of Claim~\ref{CLAIM:Q-in-Gi-G-j}]
        Since the proofs of the two conclusions are nearly identical, we will present only the proof of~\eqref{CLAIM:Q-in-Gi-G-j-a}. 
        It follows easily from Claim~\ref{CLAIM:X-12-no345}~\eqref{CLAIM:X-12-no345-a} that if two edges $e, e' \in Q_{1}$ have nonempty intersection, then $C(e) = C(e')$. 
        Moreover, by Claim~\ref{CLAIM:X-connected-diameter-2}~\eqref{CLAIM:X-connected-diameter-2-a}, the graph $Q$ is connected. 
        Therefore, there exist two distinct indices $i, j \in [5]$ such that $C(e) = \{i,j\}$ for every $e\in Q_1$. 
    \end{proof}

    By symmetry, we may assume that $C(e) = \{1,2\}$ for every edge $e \in Q_{1}$. 
    \begin{claim}\label{CLAIM:edges-in-V1-color}
        For $\alpha = 44/13$, we have $C(e) \subseteq \{1,2\}$ for every $e \in \vec{\mathcal{G}}[V_1]$. 
        In particular, 
        \begin{align*}
            G_{3}[V_1] = G_{4}[V_1] = G_{5}[V_1] = \emptyset. 
        \end{align*}
    \end{claim}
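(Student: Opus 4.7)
The plan is to derive the claim as an almost immediate consequence of three results already proved in the excerpt: Claim~\ref{CLAIM:X-connected-diameter-2}\eqref{CLAIM:X-connected-diameter-2-a} (every two vertices of $V_1$ have a common doubled neighbor in $V_1$), Claim~\ref{CLAIM:Q-in-Gi-G-j}\eqref{CLAIM:Q-in-Gi-G-j-a} together with the normalization that $C(e)=\{1,2\}$ for every $e\in Q_1$, and Claim~\ref{CLAIM:X-12-no345}\eqref{CLAIM:X-12-no345-a} (no ``$112$--colored path'' inside $V_1$ uses a color outside $\{1,2\}$ on its second edge).

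Concretely, I would fix an arbitrary edge $xy\in\vec{\mathcal{G}}[V_1]$ and aim to show $C(xy)\subseteq\{1,2\}$. By Claim~\ref{CLAIM:X-connected-diameter-2}\eqref{CLAIM:X-connected-diameter-2-a} there exists a vertex $z\in V_1\setminus\{x,y\}$ with $\{xz,yz\}\subseteq Q_1$, i.e.\ $\mu(xz)=\mu(yz)=2$. By Claim~\ref{CLAIM:Q-in-Gi-G-j}\eqref{CLAIM:Q-in-Gi-G-j-a} and our normalization, $C(xz)=\{1,2\}$, so in particular $zx\in G_1\cap G_2$. I would then invoke Claim~\ref{CLAIM:X-12-no345}\eqref{CLAIM:X-12-no345-a} on the triple $\{z,x,y\}\subseteq V_1$, assigning the roles $(x,y,z)$ of that claim to $(z,x,y)$ of the present setting and taking $(i,j)=(1,2)$: the hypothesis $``xy\in G_i\cap G_j"$ becomes $zx\in G_1\cap G_2$, which holds, so the conclusion is that no $k\in[5]\setminus\{1,2\}$ satisfies $``yz\in G_k"$, i.e.\ no $k\in[5]\setminus\{1,2\}$ satisfies $xy\in G_k$. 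Hence $C(xy)\subseteq\{1,2\}$, and since $xy$ was arbitrary this yields $G_3[V_1]=G_4[V_1]=G_5[V_1]=\emptyset$.

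I do not anticipate a real obstacle: once the earlier lemmas are in place, the entire argument is a single role assignment in Claim~\ref{CLAIM:X-12-no345}\eqref{CLAIM:X-12-no345-a}. The only points worth double-checking are that the witness $z$ supplied by Claim~\ref{CLAIM:X-connected-diameter-2}\eqref{CLAIM:X-connected-diameter-2-a} indeed lies in $V_1$ (it does, by construction of that claim) and that the conclusion applies uniformly to edges of multiplicity $1$ and $2$ in $\vec{\mathcal{G}}[V_1]$; the latter is automatic, since $z$ exists independently of $\mu(xy)$, and for $xy\in Q_1$ the conclusion $C(xy)\subseteq\{1,2\}$ is already the content of Claim~\ref{CLAIM:Q-in-Gi-G-j}\eqref{CLAIM:Q-in-Gi-G-j-a}.
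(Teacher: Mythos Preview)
Your proposal is correct and follows essentially the same approach as the paper: both arguments pick the witness $z$ from Claim~\ref{CLAIM:X-connected-diameter-2}\eqref{CLAIM:X-connected-diameter-2-a}, use the normalization $C(e)=\{1,2\}$ for $e\in Q_1$, and then feed the resulting doubled edge into Claim~\ref{CLAIM:X-12-no345}\eqref{CLAIM:X-12-no345-a} to force $C(xy)\subseteq\{1,2\}$. The only cosmetic difference is that the paper first disposes of the trivial case $Q_1=\vec{\mathcal{G}}[V_1]$ and uses $yz$ rather than $xz$ as the doubled edge, but the logic is identical.
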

    \begin{proof}[Proof of Claim~\ref{CLAIM:edges-in-V1-color}]
        We are done if $Q_{1} = \vec{\mathcal{G}}[V_1]$. So we may assume that $Q_{1} \neq \vec{\mathcal{G}}[V_1]$.
        Fix an arbitrary edge $xy \in \vec{\mathcal{G}}[V_1] \setminus Q_{1}$. 
        By Claim~\ref{CLAIM:X-connected-diameter-2}~\eqref{CLAIM:X-connected-diameter-2-a}, there exists vertex $z \in V_1 \setminus \{x,y\}$ such that $yz \in Q_{1}$ (and also $xz \in Q_{1}$).
        Applying Claim~\ref{CLAIM:X-12-no345}~\eqref{CLAIM:X-12-no345-a} to the triple $\{x,y,z\}$, we conclude that $C(xy) \subseteq C(yz) = \{1,2\}$.  
    \end{proof}

    Let us first complete the proof Theorem~\ref{THM:AES-5-multigraph}~\eqref{THM:AES-5-multigraph-a}.
    By Claim~\ref{CLAIM:edges-in-V1-color}, it suffices to show that $G_1[V_2] = G_2[V_2] = \emptyset$, i.e., 
    \begin{align}\label{equ:color-e-in-V2-not-12}
        \text{$e \not\in G_1 \cup G_2$ for every $e\in \vec{\mathcal{G}}[V_2]$.}
    \end{align}
    Suppose to the contrary that $xy \in G_1 \cup G_2$ for some $xy \in \vec{\mathcal{G}}[V_2]$. 
    By symmetry, we may assume that $xy \in G_1$. 
    Let 
    \begin{align*}
        W_6 
        \coloneqq \left\{w \in V_1 \colon \mu(wx)+\mu(wy) \ge 9 \right\}
        \quad\text{and}\quad 
        W_7 
        \coloneqq \left\{w \in V_1 \colon \mu(wx)+\mu(wy) \ge 8 \right\}.
    \end{align*}
    First, we show that $W_6 \neq \emptyset$.
    By~\eqref{EQ:bound-X-Y}, we have 
    \begin{align*}
        2\delta(\vec{\mathcal{G}})
        \le d_{\vec{\mathcal{G}}}(x) + d_{\vec{\mathcal{G}}}(y) 
        & = 2 \mu(xy) + \sum_{w \in V \setminus \{x,y\} }\left(\mu(wx)+\mu(wy)\right) \\
        & \le 4 + 4\left(|V_2|-2\right)+ 10|W_6| + 8\left(|V_1|-|W_6|\right) \\
        & = 2|W_6| + 4|V_1| + 4n-4 \\
        & < 2|W_6|+4\left(\frac{5}{3}n - \frac{1}{3}\delta(\vec{\mathcal{G}}) \right)+4n
        = 2|W_6| + \frac{32}{3} n - \frac{4}{3}\delta(\vec{\mathcal{G}}),
    \end{align*}
    which implies that 
    \begin{align*}
        |W_6| 
        > \frac{5}{3}\delta(\vec{\mathcal{G}}) -\frac{16}{3}n  
        \ge \frac{5}{3} \cdot \frac{44}{13} n -\frac{16}{3}n
        = \frac{4}{13}n >0.
    \end{align*}
    Therefore, $W_6 \neq \emptyset$. 

    Fix a vertex  $z \in W_6$. By the definition of $W_6$, we have $\mu(xz)+\mu(yz) \ge 9$.
    Now consider the set $W_7$. 
    Similarly, it follows from~\eqref{EQ:bound-X-Y} that  
    \begin{align}\label{EQ:bound-W7}
        2\delta(\vec{\mathcal{G}})
        \le d_{\vec{\mathcal{G}}}(x)+d_{\vec{\mathcal{G}}}(y) 
        & = 2 \mu(xy) + \sum_{w \in V \setminus \{x,y\} }\left(\mu(wx)+\mu(wy)\right) \notag \\
        & \le 4 + 4\left(|V_2|-2\right)+10|W_7|+7 \left(|V_1|-|W_7|\right) \notag\\
        & = 3|W_7|+3|V_1|+4n-4 \\
        &< 3|W_7|+3\left(\frac{5}{3}n - \frac{1}{3}\delta(\vec{\mathcal{G}}) \right)+4n
        = 3|W_7| + 9 n - \delta(\vec{\mathcal{G}}), \notag 
    \end{align}
    which implies that 
    \begin{align*}
        |W_7| 
        > \delta(\vec{\mathcal{G}}) -3n 
        \ge \frac{44}{13} n -3n 
        =  \frac{5}{13}n 
        > 0. 
    \end{align*} 

    \begin{claim}\label{CLAIM:W7-mu-wz-1}
        We have $\mu(wz) \le 1$ for every $w\in W_7 \setminus \{z\}$. 
    \end{claim}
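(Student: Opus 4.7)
The plan is to argue by contradiction and deduce a disjointness of color sets from Lemma~\ref{LEMMA:f5n-basic-property}~\eqref{ITEM:17->no-same-graph} that clashes with $xy \in G_1$. Concretely, suppose some vertex $w \in W_7 \setminus \{z\}$ satisfies $\mu(wz) \ge 2$. Since $\{w,z\} \subseteq V_1$ and every pair in $\binom{V_1}{2}$ has multiplicity at most $2$ (by the definition of $H$ and the bipartition), it follows that $\mu(wz) = 2$. Applying Claim~\ref{CLAIM:edges-in-V1-color}, $C(wz) \subseteq \{1,2\}$, and the equality $|C(wz)|=2$ forces $C(wz) = \{1,2\}$. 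In particular, $1 \in C(wz)$, and because $xy \in G_1$, we also have $1 \in C(xy)$.

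Next I would invoke Lemma~\ref{LEMMA:f5n-basic-property}~\eqref{ITEM:17->no-same-graph} on the induced $5$-multigraph $\vec{\mathcal{G}}[\{w,x,y,z\}]$, which is still $\mathbb{K}_4$-free. Membership $z \in W_6$ yields $\mu(xz)+\mu(yz) \ge 9$, and $w \in W_7$ yields $\mu(wx)+\mu(wy) \ge 8$, so
\begin{align*}
    \mu(wx)+\mu(yz) + \mu(wy)+\mu(xz) \ge 17.
\end{align*}
To legitimately apply the lemma, I would check that the pair $\{wz,xy\}$ is (weakly) the smallest of the three perfect matchings on $\{w,x,y,z\}$: we have $\mu(wz)+\mu(xy) \le 2 + 5 = 7$, while each of $\mu(wx)+\mu(yz)$ and $\mu(wy)+\mu(xz)$ is bounded above by $10$, so from the sum $\ge 17$ the smaller of the two is $\ge 7$. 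After a permutation of the vertex labels to match the ordering hypothesis of Lemma~\ref{LEMMA:f5n-basic-property}, part~\eqref{ITEM:17->no-same-graph} gives $C(wz) \cap C(xy) = \emptyset$.

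This contradicts $1 \in C(wz) \cap C(xy)$ established in the first paragraph, so $\mu(wz) \le 1$ for every $w \in W_7 \setminus \{z\}$, proving the claim. There is no real obstacle here; the only delicate point is verifying the ordering hypothesis of Lemma~\ref{LEMMA:f5n-basic-property} to legitimately invoke item~\eqref{ITEM:17->no-same-graph}, and that follows from the arithmetic bound $\mu(wz)+\mu(xy) \le 7$ combined with each of the other two matching-sums being at least $7$.
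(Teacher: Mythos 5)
Your proof is correct and follows essentially the same route as the paper's: both combine Claim~\ref{CLAIM:edges-in-V1-color} (forcing $C(wz)\subseteq\{1,2\}$) with the bound $\mu(xz)+\mu(yz)+\mu(xw)+\mu(yw)\ge 17$ and Lemma~\ref{LEMMA:f5n-basic-property}~\eqref{ITEM:17->no-same-graph} to get $C(wz)\cap C(xy)=\emptyset$, which rules out $1\in C(wz)$. Your explicit verification of the ordering hypothesis of Lemma~\ref{LEMMA:f5n-basic-property} (via $\mu(wz)+\mu(xy)\le 7$ while the other two matching sums are each at least $7$) is a detail the paper leaves implicit, but otherwise the arguments coincide.
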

    \begin{proof}[Proof of Claim~\ref{CLAIM:W7-mu-wz-1}]
        Fix a vertex $w\in W_7 \setminus \{z\}$. 
        Recall from Claim~\ref{CLAIM:edges-in-V1-color} that $wz \not\in G_3 \cup G_4 \cup G_5$.
        From the definitions of $W_6$ and $W_7$, we have  
        \begin{align*}
            \mu(xz) + \mu(yz) + \mu(xw) + \mu(yw)
            \ge 8 + 9 
            = 17. 
        \end{align*}
        By Lemma~\ref{LEMMA:f5n-basic-property}~\eqref{ITEM:17->no-same-graph}, it follows that $C(xy) \cap C(wz) = \emptyset$. 
        Since $xy \in G_1$ by assumption, we conclude that $wz \not\in G_1$. Therefore, $C(wz) \subseteq [5]\setminus\{1,3,4,5\}$, which implies that $\mu(wz) \le 1$. 
        This completes the proof of Claim~\ref{CLAIM:W7-mu-wz-1}.
    \end{proof}
    
    It follows from Claim~\ref{CLAIM:W7-mu-wz-1} that 
    \begin{align*}
        \delta(\vec{\mathcal{G}}) 
        \le d_{\vec{\mathcal{G}}}(z) 
        \leq 5|V_2| + \left(|W_7|-1 \right) + 2\left(|V_1|-|W_7|\right) 
        < 3|V_2|+2n-|W_7|. 
    \end{align*}
    Combining it with~\eqref{EQ:bound-W7}, we obtain 
    \begin{align*}
        \delta(\vec{\mathcal{G}})
        \le \frac{1}{5} \left(9|V_2| + 3|V_1| + 10n - 4 \right)
        \le \frac{1}{5} \left( 13n + 6 |V_2| \right)
        \le \frac{1}{5} \left( 13n + 6 \cdot \frac{n}{2} \right)
        = \frac{16}{5} n
        < \frac{44}{13} n,
    \end{align*}
    a contradiction. 
    This proves~\eqref{equ:color-e-in-V2-not-12}, and hence, completes the proof of Theorem~\ref{THM:AES-5-multigraph}~\eqref{THM:AES-5-multigraph-a}. 

    Next, we complete the proof Theorem~\ref{THM:AES-5-multigraph}~\eqref{THM:AES-5-multigraph-b}.
    Recall from Claim~\ref{CLAIM:Q-in-Gi-G-j}~\eqref{CLAIM:Q-in-Gi-G-j-b} that there exists $\{i',j'\} \subseteq [5]$ such that $C(e) = \{i',j'\}$ for every $e \in Q_2$. 
    It follows from~\eqref{equ:color-e-in-V2-not-12} that $\{i',j'\} \subseteq [5] \setminus \{1,2\}$. 
    By symmetry, we may assume that $\{i',j'\} = \{3,4\}$. 
    
    By repeating the proof of Claim~\ref{CLAIM:edges-in-V1-color} for $V_2$, we obtain the following claim. For completeness, we include its proof below.
    \begin{claim}\label{CLAIM:edges-in-V2-color}
        For $\alpha = 58/17$, we have $C(e) \subseteq \{3,4\}$ for every $e \in \vec{\mathcal{G}}[V_2]$. 
        In particular, 
        \begin{align*}
            G_{1}[V_2] = G_{2}[V_2] = G_{5}[V_2] = \emptyset. 
        \end{align*}
    \end{claim}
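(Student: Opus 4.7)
The plan is to mirror the proof of Claim~\ref{CLAIM:edges-in-V1-color} with the roles of $V_1$ and $V_2$ swapped, and with the color pair $\{1,2\}$ replaced by $\{3,4\}$. The setup is already in place: by the earlier discussion, under the hypothesis $\alpha = 58/17$, Claim~\ref{CLAIM:Q-in-Gi-G-j}~\eqref{CLAIM:Q-in-Gi-G-j-b} provides a pair $\{i',j'\}\subseteq [5]$ such that every edge of $Q_2$ has color set exactly $\{i',j'\}$, and via~\eqref{equ:color-e-in-V2-not-12} together with the symmetry choice $\{i',j'\}=\{3,4\}$, the target set is identified.

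First I would dispose of the trivial case $Q_2=\vec{\mathcal{G}}[V_2]$, in which the claim is immediate from Claim~\ref{CLAIM:Q-in-Gi-G-j}~\eqref{CLAIM:Q-in-Gi-G-j-b}. In the remaining case, fix an arbitrary edge $xy\in\vec{\mathcal{G}}[V_2]\setminus Q_2$, so $\mu(xy)=1$. Apply Claim~\ref{CLAIM:X-connected-diameter-2}~\eqref{CLAIM:X-connected-diameter-2-b} to the pair $\{x,y\}\subseteq V_2$ to obtain a vertex $z\in V_2\setminus\{x,y\}$ with $\{xz,yz\}\subseteq Q_2$. In particular $C(yz)=\{3,4\}$ by Claim~\ref{CLAIM:Q-in-Gi-G-j}~\eqref{CLAIM:Q-in-Gi-G-j-b} and our normalization.

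Now I would invoke Claim~\ref{CLAIM:X-12-no345}~\eqref{CLAIM:X-12-no345-b} on the triple $\{x,y,z\}\subseteq V_2$: since $\mu(yz)=2$ with $yz\in G_3\cap G_4$, there can be no index $k\in [5]\setminus\{3,4\}$ with $xy\in G_k$. Hence $C(xy)\subseteq\{3,4\}$. Because $xy$ was an arbitrary edge of $\vec{\mathcal{G}}[V_2]\setminus Q_2$, and edges in $Q_2$ already have color set $\{3,4\}$, this yields $C(e)\subseteq\{3,4\}$ for every $e\in \vec{\mathcal{G}}[V_2]$, which is equivalent to $G_1[V_2]=G_2[V_2]=G_5[V_2]=\emptyset$.

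There is essentially no obstacle here beyond tracking the indexing convention: the two supporting claims (the ``diameter-$2$'' Claim~\ref{CLAIM:X-connected-diameter-2} and the ``rainbow-extension'' Claim~\ref{CLAIM:X-12-no345}) were stated in two parts precisely so that the $V_2$-version of the argument goes through verbatim under the stronger minimum degree assumption $\delta(\vec{\mathcal{G}})\ge 58n/17$. The only care needed is to verify that the normalization fixing $\{i',j'\}=\{3,4\}$ is consistent with $\{i,j\}=\{1,2\}$ from Claim~\ref{CLAIM:edges-in-V1-color}, which follows since~\eqref{equ:color-e-in-V2-not-12} forces $\{i',j'\}\cap\{1,2\}=\emptyset$ and any such disjoint pair may be relabelled to $\{3,4\}$ by permuting the indices $3,4,5$.
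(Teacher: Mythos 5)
Your proof is correct and is essentially identical to the paper's own argument: both handle the trivial case $Q_2=\vec{\mathcal{G}}[V_2]$, then use Claim~\ref{CLAIM:X-connected-diameter-2}~\eqref{CLAIM:X-connected-diameter-2-b} to find $z$ with $xz,yz\in Q_2$ and apply Claim~\ref{CLAIM:X-12-no345}~\eqref{CLAIM:X-12-no345-b} to get $C(xy)\subseteq C(yz)=\{3,4\}$. Your additional remarks on the normalization $\{i',j'\}=\{3,4\}$ via~\eqref{equ:color-e-in-V2-not-12} match the paper's discussion preceding the claim.
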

    \begin{proof}[Proof of Claim~\ref{CLAIM:edges-in-V2-color}]
        We are done if $Q_{2} = \vec{\mathcal{G}}[V_2]$. So we may assume that $Q_{2} \neq \vec{\mathcal{G}}[V_2]$.
        Fix an arbitrary edge $xy \in \vec{\mathcal{G}}[V_2] \setminus Q_{2}$. 
        By Claim~\ref{CLAIM:X-connected-diameter-2}~\eqref{CLAIM:X-connected-diameter-2-b}, there exists vertex $z \in V_2 \setminus \{x,y\}$ such that $yz \in Q_{2}$ (and also $xz \in Q_{2}$).
        Applying Claim~\ref{CLAIM:X-12-no345}~\eqref{CLAIM:X-12-no345-b} to the triple $\{x,y,z\}$, we conclude that $C(xy) \subseteq C(yz) = \{3,4\}$.  
    \end{proof}
    Claim~\ref{CLAIM:edges-in-V2-color} completes the proof of Theorem~\ref{THM:AES-5-multigraph}~\eqref{THM:AES-5-multigraph-b}. 
\end{proof}

\section{Concluding remarks}\label{SEC:Remarks}
Recall that the maximum size of an $n$-vertex $\mathbb{K}_{4}$-free $5$-multigraph was determined for all $n \in \mathbb{N}$ by Bellmann--Reiher~\cite{BR19} (see~\eqref{equ:Bellmann-Reiher-5-multigraph}). Since no proof was provided there, we present a short proof here for all $n \ge 632$, using Theorem~\ref{THM:AES-5-multigraph}~\eqref{THM:AES-5-multigraph-a}. 

\begin{theorem}\label{THM:Remark-exact-f_5(n)}
    For every integer $n \ge 632$, it holds that
    \begin{align*}
        \mathrm{ex}_{5}(n,\mathbb{K}_{4}) 
        = 2\binom{n}{2} + 3 \left\lfloor n^2/4 \right\rfloor. 
    \end{align*}
\end{theorem}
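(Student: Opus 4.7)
My plan is a two-step argument: the lower bound comes directly from the construction described after~\eqref{equ:Bellmann-Reiher-5-multigraph}, while the matching upper bound combines Lemma~\ref{LEMMA:large-mindeg-multigraph} with Theorem~\ref{THM:AES-5-multigraph}~\eqref{THM:AES-5-multigraph-a}. For the lower bound, the $5$-multigraph obtained from a balanced bipartition $V_1 \cup V_2 = [n]$ by setting $G_1 = G_2 = \binom{V_1}{2} \cup K[V_1,V_2]$, $G_3 = G_4 = \binom{V_2}{2} \cup K[V_1,V_2]$, and $G_5 = K[V_1,V_2]$ has exactly $2\binom{n}{2} + 3\lfloor n^2/4 \rfloor$ edges, and is readily checked to be $\mathbb{K}_4$-free: any four vertices span at most two of the ``within-part'' graphs among $G_1,\ldots,G_5$, so three distinct color classes cannot partition the six edges of a $K_4$ into three perfect matchings.

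For the upper bound I argue by contradiction. Suppose $\vec{\mathcal{G}}$ is a $\mathbb{K}_4$-free $5$-multigraph on $n \ge 632$ vertices with $|\vec{\mathcal{G}}| \ge f(n) + 1$, where $f(n) \coloneqq 2\binom{n}{2} + 3\lfloor n^2/4 \rfloor$. A short calculation using $f(n) \ge 7n^2/4 - n$ gives $|\vec{\mathcal{G}}| \ge (44/13)\binom{n+1}{2}$, so Lemma~\ref{LEMMA:large-mindeg-multigraph} applied with $\beta = 44/13$ produces a subset $U \subseteq V(\vec{\mathcal{G}})$ of size $m$ satisfying $\delta(\vec{\mathcal{G}}[U]) \ge 44m/13$. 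Inspection of the peeling argument in that lemma's proof also shows that the number of edges of $\vec{\mathcal{G}}$ incident to $V(\vec{\mathcal{G}}) \setminus U$ is at most $22(n(n+1) - m(m+1))/13$.

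Next, Theorem~\ref{THM:AES-5-multigraph}~\eqref{THM:AES-5-multigraph-a} applied to $\vec{\mathcal{G}}[U]$ yields a nice partition of $U$ into classes of sizes $a$ and $m-a$. The structural constraints of a nice partition---three of the five graphs have no edges on one side, two have no edges on the other, and every pair on the second side has multiplicity at most $2$---give
\begin{equation*}
    |\vec{\mathcal{G}}[U]| \;\le\; 2\binom{a}{2} + 2\binom{m-a}{2} + 5 a (m-a),
\end{equation*}
and the right-hand side is maximized at the balanced split, equaling exactly $f(m)$. Combining with the peeling bound yields $|\vec{\mathcal{G}}| \le f(m) + 22(n(n+1) - m(m+1))/13$, so the contradiction reduces to the purely arithmetic inequality $f(m) + 22(n(n+1) - m(m+1))/13 < f(n) + 1$.

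Setting $g(k) \coloneqq 22k(k+1) - 13 f(k)$, this rearranges to $g(n) < g(m) + 13$. Since $g(k) = -3k^2/4 + O(k)$ as $k \to \infty$, while Lemma~\ref{LEMMA:large-mindeg-multigraph} forces $m \ge \sqrt{(3n^2 - 140n + 52)/3}$ and hence $n - m = O(1)$ for $n$ large, the difference $g(n) - g(m) \approx -3(n-m)(n+m)/4 + 35(n-m)$ is strongly negative once $n + m$ exceeds roughly $47$. The main obstacle---and the reason for the rather conservative threshold $n \ge 632$---is tracking the integrality and parity corrections in the definitions of $f$ and $g$, together with the floor inside Lemma~\ref{LEMMA:large-mindeg-multigraph}'s lower bound on $m$, so that the inequality $g(n) < g(m) + 13$ holds uniformly for every value of $m$ the lemma could return.
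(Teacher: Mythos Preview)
Your approach and the paper's share the same skeleton---greedy minimum-degree deletion combined with Theorem~\ref{THM:AES-5-multigraph}~\eqref{THM:AES-5-multigraph-a}---but there is one point to tighten and one structural difference worth noting. The point to tighten: Lemma~\ref{LEMMA:large-mindeg-multigraph} is proved by contradiction and does not assert that the set $U$ it produces is the one obtained by greedy peeling, so you cannot simply ``inspect'' its proof to read off the boundary-edge bound for that particular $U$. The fix is immediate: run the greedy deletion yourself, stopping at the first size $m'$ with $\delta(\vec{\mathcal{G}}[U'])\ge 44m'/13$; then the same computation as in the lemma's proof gives the lower bound on $m'$, and the bound on deleted edges holds by construction. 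The structural difference: rather than stopping once and doing a single comparison, the paper tracks the excess $|\vec{\mathcal{G}}_m|-f(m)$ step by step via the one-line inequality $f(m)-f(m-1)=2(m-1)+3\lfloor m/2\rfloor>44m/13$ for $m\ge 30$, showing the excess grows by at least $1$ per deletion, and then compares against the trivial bound $5\binom{n_0}{2}$ at $n_0=\lfloor 2\sqrt{3n-2}/3\rfloor$. This sidesteps your parity analysis of $g(n)-g(m)$ entirely; in the paper the threshold $n\ge 632$ arises solely from needing $n_0+1\ge 30$, not from integrality corrections in a final one-shot estimate.
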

We remark that the assumption $n \ge 632$ can be relaxed to $n \ge 52$ if one uses the upper bound~\eqref{equ:Bellmann-Reiher-5-multigraph-upper-bound} given by Bellmann--Reiher in place of the trivial bound $5\binom{n}{2}$ in the following proof. 

We will use the following simple fact. 
\begin{fact}\label{FACT:nice-partition-multigraph}
    Let $\vec{\mathcal{G}}_{n}=(G_1,\ldots,G_5)$ be an $n$-vertex $\mathbb{K}_{4}$-free $5$-multigraph. 
    Suppose that $\vec{\mathcal{G}}_{n}$ admits a nice partition. 
    Then $|\vec{\mathcal{G}}_{n}| \le 2\binom{n}{2} + 3 \left\lfloor n^2/4 \right\rfloor$. 
\end{fact}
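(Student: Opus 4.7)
The plan is a direct counting argument that does not actually need the $\mathbb{K}_{4}$-freeness hypothesis once the nice partition is in hand. Write $a = |V_1|$ and $b = |V_2|$, so $a+b=n$. I will bound the number of edges of $\vec{\mathcal{G}}_{n}$ by splitting them according to whether both endpoints lie in $V_1$, both lie in $V_2$, or one lies in each part.

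For pairs $\{u,v\} \subseteq V_1$, condition~(i) of the nice partition forces $G_3[V_1]=G_4[V_1]=G_5[V_1]=\emptyset$, so $\mu(uv) \le 2$ and the total contribution is at most $2\binom{a}{2}$. For pairs $\{u,v\} \subseteq V_2$, condition~(i) gives $G_1[V_2]=G_2[V_2]=\emptyset$, and condition~(ii) independently gives $\mu(uv)\le 2$, so the total contribution is at most $2\binom{b}{2}$. For pairs between $V_1$ and $V_2$ I will use only the trivial bound $\mu(uv)\le 5$, giving at most $5ab$ edges.

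Adding these three bounds yields
\begin{align*}
    |\vec{\mathcal{G}}_{n}|
    \le 2\binom{a}{2} + 2\binom{b}{2} + 5ab
    = (a+b)^{2} + 3ab - (a+b)
    = n^{2} - n + 3ab.
\end{align*}
Since $ab$ is an integer and $ab \le (a+b)^{2}/4 = n^{2}/4$ by the AM--GM inequality, we obtain $ab \le \lfloor n^{2}/4 \rfloor$. Substituting back gives
\begin{align*}
    |\vec{\mathcal{G}}_{n}|
    \le n^{2} - n + 3\left\lfloor n^{2}/4 \right\rfloor
    = 2\binom{n}{2} + 3\left\lfloor n^{2}/4 \right\rfloor,
\end{align*}
as desired. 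There is no main obstacle here; the only point worth double-checking is that conditions~(i) and~(ii) of the nice partition are applied to the correct part (so that the bound $\mu(uv)\le 2$ holds for pairs inside both $V_1$ and $V_2$, albeit for different reasons on the two sides), after which the estimate is just the elementary inequality $ab \le \lfloor n^{2}/4 \rfloor$.
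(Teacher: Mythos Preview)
Your proof is correct. The paper does not actually supply a proof of this fact (it is stated as a ``simple fact'' and left to the reader), and your direct counting argument---bounding multiplicities within each part by $2$ and across parts by $5$, then using $ab \le \lfloor n^2/4 \rfloor$---is exactly the natural verification the authors have in mind.
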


\begin{proof}[Proof of Theorem~\ref{THM:Remark-exact-f_5(n)}]
    Let $n \ge 632$ be an integer.  
    Let $\vec{\mathcal{G}}_{n}=(G_1,\ldots,G_5)$ be an $n$-vertex $\mathbb{K}_{4}$-free $5$-multigraph with $|\vec{\mathcal{G}}_{n}| = \mathrm{ex}_{5}(n,\mathbb{K}_{4})$, i.e., the maximum size.
    Let $V \coloneqq V(\vec{\mathcal{G}}_{n})$. 
    
    Let $g(m) \coloneqq 2\binom{m}{2} + 3 \left\lfloor m^2/4 \right\rfloor$ and $f(m) \coloneqq 44m/13$.
    Note that for every integer $m \ge 30$, 
    \begin{align*}
        g(m)-g(m-1) 
        = 2(m-1)+ 3 \left\lfloor m/2 \right\rfloor 
        > f(m). 
    \end{align*}
    Suppose that $\delta(\vec{\mathcal{G}}_{n}) \ge f(n)$.
    Then by Theorem~\ref{THM:AES-5-multigraph}~\eqref{THM:AES-5-multigraph-a}, $\vec{\mathcal{G}}_{n}$ admits a nice partition, and thus, $|\vec{\mathcal{G}}_{n}| \le g(n)$ by Fact~\ref{FACT:nice-partition-multigraph}.
    So we may assume that $\delta(\vec{\mathcal{G}}_{n}) < f(n)$. 

    Let $v_{n} \in V$ be a vertex of minimum degree in $\vec{\mathcal{G}}_{n}$. 
    Let $V_{n-1} \coloneqq V\setminus \{v_{n}\}$ and $\vec{\mathcal{G}}_{n-1} \coloneqq \vec{\mathcal{G}}_{n} - v_{n}$. 
    Then 
    \begin{align*}
        |\vec{\mathcal{G}}_{n-1}|
        = |\vec{\mathcal{G}}_{n}| - d_{\vec{\mathcal{G}}_{n}}(v_{n})
        = |\vec{\mathcal{G}}_{n}| - \delta(\vec{\mathcal{G}}_{n})
        > g(n) - f(n)
        > g(n-1),
    \end{align*}
    which means that $|\vec{\mathcal{G}}_{n-1}| \ge g(n-1)+1$. 

    Since $|\vec{\mathcal{G}}_{n-1}| \ge g(n-1)+1$, it follows from Theorem~\ref{THM:AES-5-multigraph}~\eqref{THM:AES-5-multigraph-a} and Fact~\ref{FACT:nice-partition-multigraph} that $\delta(\vec{\mathcal{G}}_{n-1}) < f(n-1)$. 
    Fix a vertex $v_{n-1} \in V_{n-1}$ of minimum degree in $\vec{\mathcal{G}}_{n-1}$. 
    Let $V_{n-2} \coloneqq V_{n-1}\setminus \{v_{n-1}\}$ and $\vec{\mathcal{G}}_{n-2} \coloneqq \vec{\mathcal{G}}_{n-1} - v_{n-1}$. 
    Then 
    \begin{align*}
        |\vec{\mathcal{G}}_{n-2}|
        = |\vec{\mathcal{G}}_{n-1}| - d_{\vec{\mathcal{G}}_{n-1}}(v_{n})
        & = |\vec{\mathcal{G}}_{n-1}| - \delta(\vec{\mathcal{G}}_{n-1}) \\
        & > g(n-1) + 1 - f(n-1) 
        > g(n-2) + 1,
    \end{align*}
    which means that $|\vec{\mathcal{G}}_{n-2}| \ge g(n-2)+2$. 

    Let $n_0 \coloneqq \lfloor 2\sqrt{3n-2}/3 \rfloor$, noting that $n_0 + 1 \ge \left\lfloor 2\sqrt{3 \times 632-2}/3 \right\rfloor + 1 = 30$ and $n \ge (9 n_0^2 + 8)/12$. 
    Let us repeat the above deleting minimum degree vertex process until we get a $5$-multigraph $\vec{\mathcal{G}}_{n_0}$ on $n_0$ vertices. 
    Then simple calculations show that   
    \begin{align*}
        |\vec{\mathcal{G}}_{n_0}|
        \ge g(n_0) + n-n_0
        & = 2\binom{n_0}{2} + 3 \left\lfloor \frac{n_0^2}{4} \right\rfloor +n - n_0 \\
        & \ge 2\binom{n_0}{2} + 3 \left\lfloor \frac{n_0^2}{4} \right\rfloor + \frac{9 n_0^2 + 8}{12} - n_0 
        > 5\binom{n_0}{2}, 
    \end{align*}
    which is a contradiction to the trivial upper bound $|\vec{\mathcal{G}}_{n_0}| \le 5\binom{n_0}{2}$. 
\end{proof}

A minor modification of the proof (specifically, Lemma~\ref{LEMMA:Bn-max-2norm}) in the present paper yields the following result on the hypergraph generalized Tur\'{a}n problem. 
\begin{theorem}\label{THM:generalized-Turan-K112}
    There exists $N_0$ such that for every $\mathbb{F}$-free $3$-graph $\mathcal{H}$ on $n \ge N_0$ vertices, 
    \begin{align*}
        \mathrm{N}(\mathbb{S}_2,\mathcal{H}) 
        \le \mathrm{N}(\mathbb{S}_2,\mathbb{B}_n), 
    \end{align*}
    and equality holds if and only if $\mathcal{H} \cong \mathbb{B}_n$. 
\end{theorem}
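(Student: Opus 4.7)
The plan is to follow the blueprint of Theorem~\ref{THM:AES-Fano-L2norm}: reduce the extremal problem for $\mathrm{N}(\mathbb{S}_2,\cdot)$ to the bipartite case, then optimize among bipartite $3$-graphs. Let $\mathcal{H}$ be an $n$-vertex $\mathbb{F}$-free $3$-graph maximizing $\mathrm{N}(\mathbb{S}_2,\cdot)$; in particular $\mathrm{N}(\mathbb{S}_2,\mathcal{H})\ge\mathrm{N}(\mathbb{S}_2,\mathbb{B}_n)=(5/32-o_{n}(1))n^4$.

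The first step is to establish near-regularity of $\mathbb{S}_2$-degrees via the symmetrization argument of Lemma~\ref{LEMMA:2norm-degree-regular}. Suppose $d_{\mathbb{S}_2,\mathcal{H}}(u)-d_{\mathbb{S}_2,\mathcal{H}}(v)>Cn^2$ for some absolute constant $C$ and some $u,v\in V(\mathcal{H})$. Replacing the link of $v$ by the link of $u$ produces an $\mathbb{F}$-free $3$-graph $\mathcal{G}$ (using that $\partial\mathbb{F}$ is complete) satisfying $\mathrm{N}(\mathbb{S}_2,\mathcal{G})-\mathrm{N}(\mathbb{S}_2,\mathcal{H})\ge d_{\mathbb{S}_2,\mathcal{H}}(u)-d_{\mathbb{S}_2,\mathcal{H}}(v)-2z$ with $z\le 12n^2$ (exactly as in~\eqref{equ:S2-change-G-H}), contradicting extremality once $C$ is sufficiently large. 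Thus $|d_{\mathbb{S}_2,\mathcal{H}}(u)-d_{\mathbb{S}_2,\mathcal{H}}(v)|=O(n^2)$ for every pair $u,v$. Combining this with the identity $\sum_v d_{\mathbb{S}_2,\mathcal{H}}(v)=4\,\mathrm{N}(\mathbb{S}_2,\mathcal{H})$ from~\eqref{equ:sum-Q-degree} yields $\delta_{\mathbb{S}_2}(\mathcal{H})\ge(5/8-o_{n}(1))n^3$, and hence via~\eqref{equ:defb-2norm-degree}, $\delta_{\ell_2}(\mathcal{H})\ge 2\delta_{\mathbb{S}_2}(\mathcal{H})\ge(5/4-o_{n}(1))n^3$. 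Theorem~\ref{THM:AES-Fano-L2norm} then forces $\mathcal{H}$ to be bipartite.

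It remains to prove the analogue of Lemma~\ref{LEMMA:Bn-max-2norm}: among bipartite $3$-graphs on $n$ vertices, $\mathbb{B}_n$ uniquely maximizes $\mathrm{N}(\mathbb{S}_2,\cdot)$. Let $V_1\cup V_2=V(\mathcal{H})$ be a bipartition. Since pairs inside $V_i$ have codegree at most $|V_{3-i}|$ and crossing pairs have codegree at most $n-2$, convexity of $\binom{\cdot}{2}$ gives
\begin{align*}
    \mathrm{N}(\mathbb{S}_2,\mathcal{H})
    \le 2\binom{|V_1|}{2}\binom{|V_2|}{2}+|V_1||V_2|\binom{n-2}{2}
    =\frac{|V_1||V_2|}{2}\Bigl[(|V_1|-1)(|V_2|-1)+(n-2)(n-3)\Bigr],
\end{align*}
with equality iff $\mathcal{H}$ is complete bipartite with parts $V_1,V_2$. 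Both $|V_1||V_2|$ and $(|V_1|-1)(|V_2|-1)$ are simultaneously maximized (subject to $|V_1|+|V_2|=n$) at the most balanced partition, so the right-hand side attains its unique maximum when $\{|V_1|,|V_2|\}=\{\lceil n/2\rceil,\lfloor n/2\rfloor\}$, forcing $\mathcal{H}\cong\mathbb{B}_n$.

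The only genuinely new computation is the bipartite optimization above; the symmetrization in the first step is a direct rewrite of Lemma~\ref{LEMMA:2norm-degree-regular} and is in fact slightly simpler, since it requires only the $\mathbb{S}_2$-count inequality~\eqref{equ:S2-change-G-H} and does not need to offset the edge-count change via~\eqref{equ:edge-change-G-H}. Hence no substantial obstacle arises, and the theorem follows by assembling these ingredients.
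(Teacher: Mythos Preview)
Your argument is correct and matches the route the paper indicates: carry over the symmetrization/near-regularity step (Lemma~\ref{LEMMA:2norm-degree-regular}, now for $\mathbb{S}_2$-degrees), feed the resulting minimum $\ell_2$-degree bound into Theorem~\ref{THM:AES-Fano-L2norm} to force bipartiteness, and then replace Lemma~\ref{LEMMA:Bn-max-2norm} by the bipartite optimization for $\mathrm{N}(\mathbb{S}_2,\cdot)$, which you carry out cleanly via $\sum_e\binom{d(e)}{2}$ and the simultaneous maximization of $ab$ and $(a-1)(b-1)$ at the balanced split. The paper does not spell out a proof beyond pointing to this modification of Lemma~\ref{LEMMA:Bn-max-2norm}, and your write-up fills in precisely that gap.
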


Several natural questions arise from the proofs and results presented in this paper. 
One such question is to extend Theorem~\ref{THM:AES-Fano-L2norm} to all positive integers $k$.
\begin{problem}\label{PROB:general-p}
    Let $k \ge 3$ be an integer. 
    Determine $\mathrm{ex}_{k}(n,\mathbb{F})$ for all sufficiently large $n$. 
\end{problem}

It would also be interesting to investigate whether the constant $58/17$ in Theorem~\ref{THM:AES-5-multigraph}~\eqref{THM:AES-5-multigraph-b} can be improved.
\begin{problem}\label{PROB:AES-multigraph-optimal}
    Determine the optimal value of $\alpha$ such that, for large $n$, every $n$-vertex $\mathbb{K}_{4}$-free $5$-multigraph $\vec{\mathcal{G}}=(G_1,\ldots, G_5)$ with $\delta(\vec{\mathcal{G}}) > \alpha n$ admits a good partition. 
\end{problem}

Extending Theorem~\ref{THM:Remark-exact-f_5(n)} to $m$-multigraphs for $m \ge 6$ also appears to be an interesting direction. 
\begin{problem}\label{PROB:multigraph-Turan-K4}
    Let $m \ge 6$ be an integer. 
    Determine $\mathrm{ex}_{m}(n,\mathbb{K}_{4})$ for all sufficiently large $n$. 
\end{problem}
\bibliographystyle{alpha}
\bibliography{Fano}
\end{document}